\newtheorem{teo}{Theorem}[section]
\newtheorem{thm}[teo]{Theorem}
\newtheorem{prop}[teo]{Proposition}
\newtheorem{lemma}[teo]{Lemma}
\newtheorem{cor}[teo]{Corollary}
\newtheorem{constr}[teo]{Construction}
\newtheorem{conj}[teo]{Conjecture}
\newtheorem{defn}[teo]{Definition}
\newtheorem{rmk}[teo]{Remark}
\newtheorem{def-prop}[teo]{Definition-Proposition}
\newtheoremstyle{named}{}{}{\itshape}{}{\bfseries}{.}{.5em}{\thmnote{#3 }#1}
\theoremstyle{named}
\newcommand{\neutralize}[1]{\expandafter\let\csname c@#1\endcsname\count@}
\newenvironment{conjbis}[1]
  {%
   \neutralize{teo}\phantomsection
   \begin{conj}}
  {\end{conj}}
  \newenvironment{conjbisbis}[1]
  {%
   \neutralize{teo}\phantomsection
   \begin{conj}}
  {\end{conj}}
\numberwithin{equation}{section}
  \newcommand{\A}{\mathbb{A}}
  \newcommand{\C}{\mathbb{C}}
  \renewcommand{\P}{\mathbb{P}}
  \newcommand{\Q}{\mathbb{Q}}
  \newcommand{\R}{\mathbb{R}}
  \newcommand{\Z}{\mathbb{Z}}
  \renewcommand{\cong}{\simeq}
  \renewcommand{\bar}{\overline}
\newcommand{\IQbar}{\overline{\Q}}
  \providecommand{\frac}[1]{\operatorname{Frac}(#1)}
  \newcommand{\spec}{\operatorname{Spec}}
  \renewcommand{\ker}{\operatorname{Ker}}
  \newcommand{\codim}{\operatorname{codim}}
  \renewcommand{\deg}{\operatorname{deg}}
\newcommand{\cA}{\mathcal{A}}
\newcommand{\cL}{\mathcal{L}}
\newcommand{\cM}{\mathcal{M}}
\newcommand{\cO}{\mathcal{O}}
\newcommand{\pullbackcorner}[1][dr]{\save*!/#1-1.7pc/#1:(-1.5,1.5)@^{|-}\restore}
\newcommand\supervisor[1]{\def\@supervisor{#1}}
\newcounter{elno}
\renewcommand{\cong}{\simeq}
\begin{document}
\title[]{Recent developments of the uniform Mordell--Lang conjecture}
\author{Ziyang Gao}

\address{CNRS, IMJ-PRG, 4 place Jussieu, 75005 Paris, France}
\email{ziyang.gao@imj-prg.fr}

\subjclass[2000]{11G10, 11G50, 14G25, 14K15}

\maketitle

\begin{abstract} This expository survey is based on my online talk at the ICCM 2020. It aims to sketch key steps of the recent proof of the uniform Mordell--Lang conjecture for curves embedded into Jacobians (a question of Mazur). 
The full version of this conjecture is proved by combining Dimitrov--Gao--Habegger \cite{DGHUnifML} and K\"{u}hne \cite{KuehneUnifMM}. We include in this survey a detailed proof on how to combine these two results, which was implicitly done in \cite{DGHBog} but not explicitly written in existing literature. 
At the end of the survey we state some future aspects.
\end{abstract}

\tableofcontents

\section{Introduction}
Let $F$ be a field of characteristic $0$. A smooth curve $C$ defined
over $F$ is a geometrically irreducible, smooth, projective curve
defined over $F$. Let $\mathrm{Jac}(C)$ be the Jacobian of $C$.

The goal of this survey is to report the recent development of the following theorem, known as the \textit{Uniform Mordell--Lang Conjecture} for curves embedded into Jacobians. It is a question posed by Mazur \cite[top of pp.234]{mazur1986arithmetic}.

\begin{thm}[Dimitrov--Gao--Habegger + K\"{u}hne]\label{ConjMazur}
Let $g \ge 2$ be an integer. Then there exists a constant $c(g) \ge 1$ with the following property. Let $C$ be a smooth curve of genus $g$ defined over $F$, let $P_0 \in C(F)$, and let $\Gamma$ be a subgroup of $\mathrm{Jac}(C)(F)$ of finite rank $\rho$. Then
\begin{equation}\label{EqBoundMazur}
\#(C(F)-P_0)\cap\Gamma \le c(g)^{1+\rho}
\end{equation}
where $C - P_0$ is viewed as a curve in $\mathrm{Jac}(C)$ via the Abel--Jacobi map based at $P_0$.
\end{thm}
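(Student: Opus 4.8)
The plan is to reduce to the case $F=\IQbar$ with $\Gamma$ divisible, and then to split the (finitely many) points we must count according to the size of their N\'eron--Tate height: the points of large height are controlled by the uniform gap principle of \cite{DGHUnifML}, and the points of small height by the uniform Bogomolov inequality that follows from K\"{u}hne's equidistribution theorem \cite{KuehneUnifMM}. Assembling the two --- reconciling the height thresholds and handling the dependence on $P_0$ --- is the \emph{combination} step, done implicitly in \cite{DGHBog}.

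I would first carry out the reductions. It suffices to bound $\#S$ for an arbitrary finite $S\subseteq(C(F)-P_0)\cap\Gamma$. Since $\Gamma$ has finite rank there is a finitely generated subgroup $\Gamma_1\subseteq\Gamma$ of rank $\rho$ with $P-P_0\in\Gamma_1$ for every $P\in S$, and then $C$, $P_0$, the points of $S$ and generators of $\Gamma_1$ are all defined over a subfield of $F$ finitely generated over $\Q$; so we may assume $F$ is finitely generated over $\Q$. Spreading $C$, $P_0$ and the generators of $\Gamma_1$ out over a $\Q$-variety with function field $F$ and invoking N\'eron's specialization theorem, we find $\IQbar$-points of the base at which the curve stays smooth of genus $g$, the points of $S$ stay pairwise distinct, and the specialization map is injective on $\Gamma_1$ with image of rank $\le\rho$; hence $\#S$ is at most the corresponding count over $\IQbar$. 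As replacing $\Gamma$ by its division hull in $\mathrm{Jac}(C)(\IQbar)$ leaves the rank unchanged, we may assume from now on that $F=\IQbar$ and that $\Gamma$ is divisible of rank $\rho$.

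Write $A=\mathrm{Jac}(C)$, let $j\colon C\hookrightarrow A$ be the Abel--Jacobi map based at $P_0$, so that $X:=j(C)=C-P_0$, fix a symmetric ample $L$ representing the principal polarization, and let $\hat h=\hat h_{A,L}$ be the N\'eron--Tate height, with $\hat h^{1/2}=\|\cdot\|$ the induced Euclidean norm on $\Gamma\otimes\R\cong\R^{\rho}$ (the mild dependence of $\hat h$ on $P_0$ is absorbed into constants). Choose a threshold $\epsilon(g)>0$, to be fixed below, and split $X(\IQbar)\cap\Gamma$ at it. For the part with $\hat h\ge\epsilon(g)$ I would invoke \cite{DGHUnifML}: the Gao--Habegger height inequality for the universal family of genus-$g$ curves, together with its fiber-power refinement --- proved through an analysis of the Betti map, o-minimality and a mixed Ax--Schanuel statement --- makes the Mumford and Vojta inequalities effective with all constants depending only on $g$. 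Thus there are $\lambda(g)>1$, $\alpha(g)>0$, $c_2(g)\ge1$ so that, outside a degenerate subset of $\le c(g)$ points, any two distinct $P,Q\in X(\IQbar)$ lying at angle $<\alpha(g)$ in $\Gamma\otimes\R$ satisfy $\hat h(Q)\ge\lambda(g)\hat h(P)$ or $\hat h(P)\ge\lambda(g)\hat h(Q)$ (Mumford), while $\hat h(Q)\ge c_2(g)\hat h(P)$ is impossible (Vojta). Covering the unit sphere of $\Gamma\otimes\R$ by $\le c_3(g)^{\rho}$ cones of angular diameter $<\alpha(g)$ and arguing inside each cone --- heights sorted increasingly grow by a factor $\ge\lambda(g)$ at each step, yet the largest is less than $c_2(g)$ times the smallest --- gives $\le 1+\log_{\lambda(g)}c_2(g)$ points per cone, hence
\[
\#\{\,P\in X(\IQbar)\cap\Gamma:\ \hat h(P)\ge\epsilon(g)\,\}\le c_3(g)^{\rho}\bigl(1+\log_{\lambda(g)}c_2(g)\bigr)+c(g)\le c_4(g)^{1+\rho}
\]
for a suitable $c_4(g)$. (The Vojta step requires heights above a cutoff, which the height inequality makes uniform once the curve has small moduli height; the leftover range has bounded logarithmic length and is folded into the cone count using the Mumford inequality alone.)

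For the part with $\hat h<\epsilon(g)$ I would apply \cite{KuehneUnifMM}: K\"{u}hne's relative equidistribution theorem for sections of families of abelian varieties yields a uniform Bogomolov bound --- there are $\epsilon'(g)>0$ and $N(g)\ge1$ such that every smooth genus-$g$ curve over $\IQbar$, embedded in its Jacobian by $P\mapsto[P]-M$ for any degree-one line bundle $M$, has at most $N(g)$ points of N\'eron--Tate height $<\epsilon'(g)$. Fixing $\epsilon(g)\le\epsilon'(g)/4$: were $\{P:\hat h(j(P))<\epsilon(g)\}$ to have more than $N(g)$ elements, picking one of them, $P_1$, and using $\|j(P)-j(P_1)\|\le\|j(P)\|+\|j(P_1)\|<2\sqrt{\epsilon(g)}$ --- where $j(P)-j(P_1)$ is the Abel--Jacobi image of $P$ based at $P_1$ --- would exhibit more than $N(g)$ points of height $<\epsilon'(g)$ for that embedding, a contradiction; hence $\#\{P\in X(\IQbar)\cap\Gamma:\hat h(P)<\epsilon(g)\}\le N(g)$. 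Adding the two estimates,
\[
\#\bigl((C(\IQbar)-P_0)\cap\Gamma\bigr)\le N(g)+c_4(g)^{1+\rho}\le c(g)^{1+\rho},\qquad c(g):=N(g)+c_4(g),
\]
which, combined with the first step, proves the theorem over an arbitrary $F$.

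The two genuinely deep inputs are the Gao--Habegger height inequality underpinning the uniform gap principle --- whose proof runs through a careful study of the Betti/Kodaira--Spencer map, its degeneracy loci, and a mixed Ax--Schanuel theorem --- and K\"{u}hne's relative equidistribution theorem, which is the new ingredient responsible for the uniformity of the small-height count and is arguably the deepest single step of the whole argument. Everything else --- the specialization reductions, the sphere-covering bookkeeping, and especially the reconciliation of the two height thresholds so that no middle range is left uncovered --- is essentially formal; it is precisely this last reconciliation that had not been written down and that the survey supplies.
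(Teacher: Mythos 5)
Your overall architecture --- specialize to $\IQbar$, then split by height, using \cite{DGHUnifML} for the large heights and \cite{KuehneUnifMM} for the small ones --- is the right one, and your small-height step (re-basing the Abel--Jacobi map at a point $P_1$ of small height so that K\"uhne's uniform Bogomolov bound applies) matches what the paper does via the tautological family $\mathfrak{C}\subseteq\mathfrak{J}_{\mathfrak{C}_g}\rightarrow\mathfrak{C}_g$. But there is a genuine gap in your treatment of the points with $\hat h\ge\epsilon(g)$. The Mumford and Vojta inequalities, even in their uniform quantitative forms (Theorem~\ref{ThmMumfordVojtaIneq}, Theorem~\ref{ThmLargePointsFamily}), apply only to points whose height exceeds a cutoff of the form $c\max\{1,h([\mathrm{Jac}(C)])\}$, a threshold that \emph{grows with the modular height of the curve}; they say nothing below it. What the Dimitrov--Gao--Habegger height inequality buys is \emph{not} a lowering of this cutoff to a constant $\epsilon(g)$: it is a gap principle \emph{at the growing threshold}, namely that at most $c_2(g)$ points $Q$ satisfy $\hat h(Q-P)\le c_1\max\{1,h(s)\}-c_3$, with K\"uhne's theorem supplying exactly the removal of the constant defect $c_3$. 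Your parenthetical claim that the leftover range ``has bounded logarithmic length'' is false: the range $[\epsilon(g),\,c\max\{1,h(s)\}]$ has logarithmic length of order $\log h(s)$, which is unbounded over the moduli space, and the Mumford inequality alone cannot be used there since its error term is itself of size $O(\max\{1,h(s)\})$. So your two regimes do not meet, and the uncovered middle range is precisely where the new input is needed.

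The paper closes this gap with the New Gap Principle (Theorem~\ref{ThmNGP}) plus a ball-packing argument: the points with $\hat h\le R^2:=c\max\{1,h(s)\}$ lie in a ball of radius $R$ in $\Gamma\otimes\R$, which is covered by at most $(1+2R/r)^{\rho}$ balls of radius $r:=\tfrac12\bigl(c_1\max\{1,h(s)\}\bigr)^{1/2}$ centered at points of the set; the ratio $R/r$ is bounded in terms of $g$ alone because numerator and denominator both scale like $\max\{1,h(s)\}^{1/2}$, and each small ball contains fewer than $c_2$ points by Theorem~\ref{ThmNGP}. This step --- a gap principle whose radius grows with the modular height, combined with packing --- is what your proposal is missing; it is exactly the ``reconciliation'' you correctly identify as the content of the combination, but a constant height threshold cannot effect it.
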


A specialization argument using Masser's result \cite{masser1989specializations} reduces this theorem to $F = \overline{\Q}$; see \cite[Lem.3.1]{DGHBog}. Then Theorem~\ref{ConjMazur} is proved by a combination of the recent works of Dimitrov--Gao--Habegger \cite{DGHUnifML} and K\"{u}hne \cite{KuehneUnifMM}. 
More precisely, Dimitrov--Gao--Habegger's \cite[Thm.1.2]{DGHUnifML} proves Theorem~\ref{ConjMazur} for curves $C$ whose modular height is larger than a number $\delta = \delta(g)$ depending only on the genus $g$, and it can be complemented by K\"{u}hne's \cite[Thm.3]{KuehneUnifMM}
 because \cite[Thm.3]{KuehneUnifMM} can handle curves with small modular height.

The way to combine these results to obtain Theorem~\ref{ConjMazur} is not immediate; it was implicitly done in \cite[$\mathsection$2.3 and 2.4]{DGHBog} but did not appear explicitly in literature. In this survey, we include this argument in  $\mathsection$\ref{SectionNGPProof}.

\medskip
There are already some excellent surveys on the topic of the Mordell--Lang Conjecture, for example \cite{HindrySurveyML} and \cite{Mazur:00}, where aspects on function fields can also be found. The current survey focuses on the \textit{uniformity} aspect.

\medskip
Here is a first digest on the conclusion of Theorem~\ref{ConjMazur} and its consequences, including two particularly interesting cases (rational points and algebraic torsion points). In what follows $g \ge 2$.
\begin{enumerate}
\item\label{EnumerateRatPt} \textbf{Rational points.} A particularly important case of Theorem~\ref{ConjMazur} is when $F$ is a number field and $\Gamma = \mathrm{Jac}(C)(F)$. In this case, the Mordell--Weil Theorem says that $\mathrm{Jac}(C)(F)$ is a finitely generated abelian group. 
Thus \eqref{EqBoundMazur} becomes a bound on the number of rational points $\#C(F) \le c(g)^{1+\mathrm{rk}\mathrm{Jac}(C)(F)}$. This improves \cite[Thm.1.1]{DGHUnifML}, which proves $\#C(F) \le c(g,[F:\Q])^{1+\mathrm{rk}\mathrm{Jac}(C)(F)}$. However, $\#C(F)$ must depend on $[F:\Q]$ in some way; in the stronger bound this dependence is encoded in $\mathrm{rk}\mathrm{Jac}(C)(F)$.

In the case of rational points, the most ambitious bound is that $\#C(F)$ is bounded above solely in terms of $g$ and $[F:\Q]$. Caporaso--Harris--Mazur and Pacelli \cite{CaHaMa, Pacelli} proved this bound \textit{assuming a widely open conjecture of Lang}.\footnote{When the number field $F$ is fixed, \cite{CaHaMa, CaHaMaUpdate} proved more: Assuming the widely open \textit{Strong Lang Conjecture}, the cardinality $\#C(F)$ is bounded above solely in terms of $g$ except for finitely many $F$-isomorphic classes of curves $C$ of genus $g \ge 2$.} Techniques developed by Abramovich in \cite{Abramovich:CHM} were used in Pacelli's work.

\item \textbf{Arbitrary finite rank subgroup.} If we pass from rational points to an arbitrary $\Gamma$ and proceed with quasi-orthogonality (Vojta's method), then the bound \eqref{EqBoundMazur} is optimal. Indeed, $\#(C(F)-P_0) \cap \Gamma$ must depend on $g$ and $\rho =\mathrm{rk}\Gamma$. Moreover, the exponent $1+\rho$ is optimal: While it is clear that the exponent should be at least $\rho = \mathrm{rk}\Gamma$ for a general $\Gamma$, we need the extra value $1$ to handle torsion points; see the next case.

\item\label{EnumerateTorPt} \textbf{Algebraic torsion points.} Another particularly interesting case of Theorem~\ref{ConjMazur} is when  $F = \C$ and $\Gamma = \mathrm{Jac}(C)_{\mathrm{tor}}$. In this case \eqref{EqBoundMazur} becomes $\#(C(\C)-P_0)\cap  \mathrm{Jac}(C)_{\mathrm{tor}} \le c(g)$, the \textit{Uniform Manin--Mumford Conjecture} for curves in their Jacobians. In this case, \cite[Thm.3]{KuehneUnifMM} suffices to conclude.

\cite[Thm.3]{KuehneUnifMM} is sometimes known as the \textit{Uniform Bogomolov Conjecture} for curves embedded into Jacobians and is of independent interest. It can be deduced from the \textit{Relative Bogomolov Conjecture} \cite[Conj.1.1]{DGHBog} which is still open. We will have a discussion on this in $\mathsection$\ref{SubsectionRelBog}. 
In this survey, the Uniform Bogomolov Conjecture is merged to be part of the \textit{New Gap Principle}, Theorem~\ref{ThmNGP}; the latter is the major new input which, based on Vojta's proof of the Mordell Conjecture and classical results of many others, leads to Theorem~\ref{ConjMazur}; see $\mathsection$\ref{SubsectionVojtaIntro} and $\mathsection$\ref{SubsectionNGPIntro}.
\end{enumerate}

\medskip

Let us step back and give a historical point of view. The problem is divided into several grades.

\begin{itemize}
\item[-] \textbf{Finiteness.} Faltings \cite{faltings1983endlichkeitssatze} proved the celebrated \textit{Mordell conjecture}, which claims that a smooth curve of genus $g \ge 2$ defined over a number field has only finitely many rational points. 
This is precisely the \textit{finiteness} of $C(F)$, the rational point problem mentioned in \eqref{EnumerateRatPt} above. 
A new proof was later on given by Vojta \cite{Vojta:siegelcompact}, which was simplified by Faltings \cite{Faltings:DAAV} and further simplified by Bombieri \cite{bombieri1990mordell}.  Notice that up to replacing $F$ by a finite extension, this implies the finiteness of $(C(\IQbar)-P_0)\cap \Gamma$ for $\Gamma$ an arbitrary finitely generated subgroup. Raynaud \cite{RaynaudML} explained how to pass from finitely generated subgroups to finite rank subgroups.

As for algebraic torsion points as mentioned in \eqref{EnumerateTorPt} above, Raynaud \cite{Raynaud:MM} proved the Manin--Mumford conjecture, claiming the finiteness of $(C(\C)-P_0)\cap \mathrm{Jac}(C)_{\mathrm{tor}}$.

Faltings \cite{Faltings:DAAV} also further generalized Vojta's proof to allow high dimensional subvarieties of an abelian variety, and Hindry \cite{Hindry:Lang} proved how to pass from finitely generated subgroups to finite rank subgroups in this more general situation. Thus the \textit{Mordell--Lang Conjecture} for abelian varieties was proved by  \cite{Faltings:DAAV} and \cite{Hindry:Lang}.

\item[-] \textbf{Bounds.} Bombieri's proof \cite{bombieri1990mordell} was the first to give effective bounds for the number of rational points. Silverman \cite{Silverman:twists} proved a bound on the number of rational points when $C$ ranges over twists of a given smooth curve. The Bogomolov conjecture, proved by Ullmo \cite{Ullmo} and S.~Zhang \cite{ZhangEquidist}, allows to bound $\#(C(\IQbar)-P_0)\cap \Gamma$ for arbitrary $\Gamma$. The bound thus obtained depends on $C$ and is not explicit.

An explicit upper bound of $\#(C(\IQbar)-P_0)\cap\Gamma$ was later on proved by R\'{e}mond \cite{Remond:Decompte}. Apart from $g$ and $\mathrm{rk}(\Gamma)$, R\'{e}mond's bound depends also on a suitable height of $\mathrm{Jac}(C)$ and the degree of the definition field of $C$. Setting $P_0 \in C(F)$ and $\Gamma = \mathrm{Jac}(C)(F)$ then leads  to a bound of the number for the rational point problem mentioned in \eqref{EnumerateRatPt} above. 
Based on this result, a more explicit bound for the number of rational points was obtained for a particular kind of curves \cite{Remond2010}. R\'{e}mond's bound holds true for high dimensional subvarieties of abelian varieties.

\item[-] \textbf{Uniform bounds.} Let us turn to previous results towards Theorem~\ref{ConjMazur}. In the direction of rational points, \textit{i.e.} the bound $\#C(F) \le c(g,[F:\Q])^{1+\mathrm{rk}\mathrm{Jac}(C)(F)}$ for $F$ a number field mentioned in \eqref{EnumerateRatPt} above. Based on the method of Vojta, David--Philippon~\cite{DaPh:07} proved this bound  if $\mathrm{Jac}(C)$ is contained 
in a power of an
elliptic curve, and David--Nakamaye--Philippon proved this bound for some families of curves \cite{DaNaPh:07}. 
More recently,  Alpoge~\cite{AlpogeRatPt} \cite[Chap.5]{AlpogeThesis} proved that
the average number of rational points on a curve of genus $2$ with a
marked Weierstrass point is bounded. Pazuki \cite{PazukiBound, PazukiHabilitation} showed that a suitable version of the far-reaching Lang--Silverman conjecture implies the desired bound; some unconditional results are obtained in some cases \cite[Cor.1.10]{PazukiBound}.
 The Chabauty--Coleman
approach~\cite{Chabauty, Coleman:effCha} yields estimates under an additional
hypothesis on the rank of Mordell--Weil group. For example, if
$\mathrm{Jac}(C)(F)$ has rank at most $g-3$, Stoll~\cite{Stoll:Uniform} showed
that $\#C(F)$ is bounded solely in terms of $[F:\Q]$ and $g$ if $C$ is
hyperelliptic; Katz--Rabinoff--Zureick-Brown~\cite{KatzRabinoffZB}
later, under the same rank hypothesis, removed the hyperelliptic
hypothesis. 

In the direction of torsion points, \textit{i.e.} $F = \C$ and $\Gamma = \mathrm{Jac}(C)_{\mathrm{tor}}$ mentioned in \eqref{EnumerateTorPt}, the desired bound $\#(C(\C)-P_0)\cap \mathrm{Jac}(C)_{\mathrm{tor}} \le c(g)$ was proved by DeMarco--Krieger--Ye \cite{DeMarcoKriegerYeUniManinMumford} for any genus $2$ curve admitting a degree-two map to an elliptic curve when the Abel--Jacobi map is based at a Weierstrass point. Katz--Rabinoff--Zureick-Brown~\cite{KatzRabinoffZB} proved a weaker bound (in the form of \cite[Thm.1.4]{DGHUnifML}) assuming that $C$ has good reduction at a small prime. \textit{Over function fields}\footnote{Namely, $F$ is an algebraic closure of $k(B)$, where $k$ is an algebraically closed field and $B$ is a smooth curve defined over $k$.} and if $C$ is not isotrivial, Looper--Silverman--Wilms \cite{WilmsUnifMM} proved an explicit bound $c(g) = 112g^2 + 240g + 380$; Wilms's result remains true over positive characteristic.

Stoll~\cite{Stoll:Uniform} showed that a far-reaching conjecture of
Pink~\cite{Pink} on unlikely intersections implies
Theorem~\ref{ConjMazur}.

\item[-] \textbf{Effective Mordell.} This is not directly related to the topic of the current survey. As a question it is  fundamental but currently out of reach.

For the rational point problem as mentioned in \eqref{EnumerateRatPt}, the \textit{effective Mordell conjecture} is to find an explicit bound for the height of $P \in C(F)$ which is linear in terms of a suitable height of $C$; see \cite[Conj.F.4.3.2]{DG2000}. Little is known for this conjecture. In spirit of the Manin-Demjanenko method \cite[$\mathsection$5.2]{SerreLectureMW}, Checcoli, Veneziano, and Viada~\cite{CVV:17, CVV:19, VV:20} have some results on this. There are also $p$-adic approaches (Chabauty--Coleman--Kim, Lawrence--Venkatesh) to this question, for which we refer to the survey \cite{pAdicMordellSurvey}.
\end{itemize}

\subsection{Key new ingredients}\label{SubsectionKeyNewIngre}
The proof of Theorem~\ref{ConjMazur} is based on Vojta's approach to prove the Mordell conjecture \cite{Vojta:siegelcompact}. A key new notion to prove Theorem~\ref{ConjMazur} is the \textit{non-degenerate subvarieties} of any given abelian scheme over $\overline{\Q}$; see $\mathsection$\ref{SectionNonDeg}. This notion was introduced by Habegger in \cite{Hab:Special}, and played an important role in the proof of the Geometric Bogomolov Conjecture over characteristic $0$ by Gao--Habegger and Cantat--Gao--Habegger--Xie \cite{GaoHab, CGHX:18}.

In the course of the proof, the following aspects on non-degenerate subvarieties have been developed.
\begin{enumerate}
\item[(i)] The geometric criterion of non-degenerate subvarieties and some related constructions.
\item[(ii)] A height inequality on any given non-degenerate subvariety.
\item[(iii)] An equidistribution result on any given non-degenerate subvariety.
\end{enumerate}
Part (i) was done by Gao in \cite{GaoBettiRank}, part (ii) was done by Dimitrov--Gao--Habegger in \cite{DGHUnifML}, and part (iii) was done by K\"{u}hne in \cite{KuehneUnifMM}. For $1$-parameter families of abelian varieties, (i) and (ii) were proved in \cite{Hab:Special} for fibered power of elliptic surfaces and in \cite{GaoHab} in its full generality.

Dimitrov--Gao--Habegger's \cite[Thm.1.2]{DGHUnifML} uses (i) and (ii). The blueprint was laid down in \cite{DGH1p}, where we used \cite{GaoHab} to prove \cite[Thm.1.2]{DGHUnifML} for $1$-parameter families.

K\"{u}hne's \cite[Thm.3]{KuehneUnifMM} uses (i) and (iii), and implicitly part (ii) as it was used in K\"{u}hne's proof of the equidistribution result. 

More recently, Yuan--Zhang extended the  definition of non-degenerate subvarieties to polarized dynamical systems \cite[$\mathsection$6.2.2]{YuanZhangEqui}. They proved a more general height inequality and a more general equidistribution theorem \cite[Thm.6.5 and Thm.6.7]{YuanZhangEqui}. Their proof uses deep theory of adelic line bundles, arithmetic intersection theory and arithmetic volumes. Notice that in the case of abelian schemes, this leads to new proofs of (ii) and (iii) above.

\subsection{Quick summary of Vojta's method}\label{SubsectionVojtaIntro}
Before moving on, let us take a step back to briefly recall Vojta's method. Let $\mathbb{A}_{g,1}$ be the coarse moduli space of principally polarized abelian varieties of dimension $g$. Fix an immersion $\iota \colon \mathbb{A}_{g,1} \rightarrow \P^m_{\overline{\Q}}$. Let $h \colon \P^m_{\overline{\Q}}\rightarrow \R$ be the absolute logarithmic Weil height. In what follows, we will identify $\mathbb{A}_{g,1}$ with its image under $\iota$.

Let $\hat h\colon \mathrm{Jac}(C)(\IQbar)\rightarrow [0,\infty)$ denote the
N\'eron--Tate height attached to a symmetric and ample line bundle on $\mathrm{Jac}(C)$. 
We divide $C(\IQbar) \cap \Gamma$ into two parts:
\begin{itemize}
\item Small points $\left\{P \in C(\IQbar) \cap \Gamma
: \hat{h}(P) \le B(C)  \right\}$;
\item Large points $\left\{P \in C(\IQbar) \cap \Gamma : \hat{h}(P) > B(C) \right\}$
\end{itemize}
where $B(C)$ is allowed to depend on a suitable height of $C$. 
Denote by $[\mathrm{Jac}(C)]$ the point in $\P^m(\IQbar)$ induced by $\mathrm{Jac}(C)$ and $\iota$. 
It
turns out that
we can take $B(C)=c_0 \max\{1,h([\mathrm{Jac}(C)])\}$
for some $c_0 = c_0(g) > 0$. The constant $c_0$ is chosen in a
way that accommodates  both the \textit{Mumford inequality} and
the \textit{Vojta inequality}. Combining these
two inequalities yields an upper bound on the number of large points
by $c_1(g)^{1+\rho}$, see for example Vojta's~\cite[Thm.6.1]{Vojta:siegelcompact} in the important case where $\Gamma$ is the
group of points of $\mathrm{Jac}(C)$ rational over
 a number field or more generally in the work of
 David--Philippon~\cite{DPvarabII,DaPh:07} and R\'emond~\cite{Remond:Decompte}. Moreover, in the case for rational points, de
Diego~\cite{deDiego:97} proved that the number of large points 
is at most $c(g)
7^{\rho}$, where $c(g)>0$ depends only on $g$; the value $7$ had already appeared in Bombieri's work \cite{bombieri1990mordell}. 
Recently, Alpoge \cite{AlpogeRatPt} \cite[Thm.6.1.1]{AlpogeThesis} improved  $7$
to $1.872$ and, for $g$ large enough, even to $1.311$. 

David--Philippon~\cite{DPvarabII,DaPh:07} also showed that an appropriate lower bound on the essential minimum of subvarieties of $\mathrm{Jac}(C)$ yields a bound on the number of small points.


\subsection{A New Gap Principle}\label{SubsectionNGPIntro}
As said above, the combination of \cite{DGHUnifML} and \cite{KuehneUnifMM} to imply Theorem~\ref{ConjMazur} is not immediate. This is done via proving the following New Gap Principle. We refer to Theorem~\ref{ThmNGP} for the precise statement.

Roughly speaking, we find positive constants $c_1$ and $c_2$ that depend only on $g$  such that each $P \in C(\overline{\Q})$ satisfies
 \begin{equation}
 \#\left\{Q \in C(\overline{\Q}) : \hat{h}(Q-P) \le c_1\max\{1, h([\mathrm{Jac}(C)])\} \right\} < c_2. 
\end{equation}

Up to some finite set of uniformly bounded cardinality, this New Gap Principle is precisely \cite[Prop.7.1]{DGHUnifML} provided that $h([\mathrm{Jac}(C)]) \ge \delta$ for some $\delta = \delta(g)$. It was explained in \cite[Prop.2.3 and Prop.2.5]{DGHBog} how this extra condition on $h([\mathrm{Jac}(C)])$ can be removed by assuming the  \textit{Relative Bogomolov Conjecture}. Following a similar proof, we show in $\mathsection$\ref{SectionNGPProof}  that this extra condition on $h([\mathrm{Jac}(C)])$ can also be removed by using \cite[Thm.3]{KuehneUnifMM}, which itself can be deduced from the Relative Bogomolov Conjecture.

Here is a sketch. The proof of \cite[Prop.7.1]{DGHUnifML} shows that the bound above holds true (for any curve) with $c_1 \max\{1, h([\mathrm{Jac}(C)])\}$ replaced by $c_1 \max\{1, h([\mathrm{Jac}(C)])\} - c_3$, for some $c_3 = c_3(g)$. Hence what remains to be done is to remove this constant term $c_3$. This is exactly what \cite[Thm.3]{KuehneUnifMM} ($\#\{Q \in C(\overline{\Q}) : \hat{h}(Q-P) \le c_3 \} < c_2$ up to adjusting $c_3$ and $c_2$ appropriately)  accounts for.

\subsection{Structure of the survey}
In $\mathsection$\ref{SectionHtMachine}, we give a quick recall to the Height Machine. In $\mathsection$\ref{SectionVojta}, we briefly go through the key ingredients of Vojta's approach to prove the Mordell conjecture. In particular, we will summarize the classical results on bounding the number of large points, by Mumford's and Vojta's inequality; in the end we state the classical results in the relative setting. 

In $\mathsection$\ref{SectionNGP}, we give our setup involving several universal families, and state the New Gap Principle. In $\mathsection$\ref{SectionBetti}, we recall the Betti map and Betti form, which are fundamental tools to study non-degeneracy.

In $\mathsection$\ref{SectionNonDeg}--\ref{SectionEqdistr}, we explain the three key new ingredients listed in $\mathsection$\ref{SubsectionKeyNewIngre}, each occupying a section. We will state the main results and focus on presenting how they are applied. 
In $\mathsection$\ref{SectionNonDeg}, we give the definition of non-degenerate subvarieties in two equivalent ways and explain how to construct non-degenerate subvarieties from given varieties; this construction is  important in applications. In $\mathsection$\ref{SectionHtIneqEqdistr}, we state the height inequality and give an example on how it is used in Diophantine Geometry. This example is in line with \cite[Prop.7.1]{DGHUnifML}; a minor improvement is that it provides more explicit constants. In $\mathsection$\ref{SectionEqdistr}, we state the equidistribution result, and give a detailed proof on how it is used to prove \cite[Thm.3]{KuehneUnifMM}.

We will give a detailed proof of the New Gap Principle in $\mathsection$\ref{SectionNGPProof} using the height inequality and the equidistribution result from the previous section. The proof is in line with \cite[Prop.2.3]{DGHBog}. Then we shortly explain how to conclude for Theorem~\ref{ConjMazur}.

We will discuss some related open problems in $\mathsection$\ref{SectionOtherAspects}. In $\mathsection$\ref{SubsectionRelBog}, we state the \textit{Relative Bogomolov Conjecture} and explain how it implies \cite[Thm.3]{KuehneUnifMM}. In $\mathsection$\ref{SubsectionHighDimUnifML}, we discuss briefly the \textit{Uniform Mordell--Lang Conjecture} for high dimensional subvarieties of abelian varieties. We give several equivalent formulations of this conjecture and prove their equivalence. We also formulate (without proof) the generalized \textit{New Gap Principle}.

\subsection*{Acknowledgements}
I would like to thank my collaborators Philipp Habegger and Vesselin Dimitrov on this project, and I would like to thank Yves Andr\'{e}, Serge Cantat, Pietro Corvaja, Junyi Xie, and Umberto Zannier for collaboration on related problems. I would like to thank Lars K\"{u}hne for sending me his preprints \cite{KuehneUnifMM, KuehneRBC}. I would like to thank Dan Abramovich, Marc Hindry, and Barry Mazur for their encouragement, comments, and suggestions on the conjectures about the high dimensional subvarieties discussed in $\mathsection$\ref{SubsectionHighDimUnifML}. 
I would like to thank Camille Amoyal, Laura DeMarco, Gabriel Dill, Philipp Habegger, Marc Hindry, Lars K\"{u}hne, Myrto Mavraki, Barry Mazur, Fabien Pazuki, Yunqing Tang, Xinyi Yuan, and Umberto Zannier for their valuable comments on a previous version of the manuscript. I would like to thank Gabriel Dill for providing me the references \cite{RaynaudML} and for an argument to fix a gap of Lemma~\ref{LemmaConjHighDimMLStrWeakEqui} in a previous version. I would like to thank Ga\"{e}l R\'{e}mond for providing me the two examples at the end of the survey; they helped me achieve the current formulation for Conjecture~\ref{ConjNGPHighDim}. 
This project has received funding from the European Research Council (ERC) under the European Union’s Horizon 2020 research and innovation programme (grant agreement n$^{\circ}$ 945714).

\section{The Height Machine}\label{SectionHtMachine}
In this section, we recollect some basic facts on the Height Machine and the canonical height functions on an abelian variety. There are many standard textbooks on this, for example \cite{BG} and \cite{DG2000}.

All varieties, line bundles and morphisms in this section are assumed to be defined over $\IQbar$.

\subsection{Naive height function on projective spaces}
We refer to \cite[Chap.1]{BG} and \cite[B.1 and B.2]{DG2000}.

We start with the simplest case. Let $x \in \P^1(\Q)$. There is a unique way to write $x$ as $[a:b]$ with $a, b \in \Z$ such that we are in one of the following two cases:
\begin{itemize}
\item $a=0$, $b=1$ or $a=1$, $b=0$;
\item $a > 0$ and $b \not=0 $ are coprime.
\end{itemize}
Then the \textit{height} of $x$ is defined to be $0$ in the first case and $\log\max\{|a|,|b|\}$ in the second case, with $|\cdot|$ being the standard absolute value.

Now let us generalize this definition to $\P^n(K)$ for any integer $n \ge 1$ and any number field $K$.

A \textit{place} of a number field $K$ is an absolute value
$|\cdot|_v \colon K\rightarrow [0,\infty)$ whose restriction to $\Q$ is
either the standard absolute value or a $p$-adic absolute value for
some prime $p$ with $|p|=p^{-1}$. Let $K_v$ be the completion of $K$ at $v$ with respect to $|\cdot|_v$. 
Set $d_v = [K_v:\R]$ in the former and
$d_v = [K_v:\Q_p]$ in the latter case.
The \textit{absolute logarithmic Weil height}, or just \textit{height}, of a
point $x = [x_0:\ldots:x_n]\in \mathbb{P}^n(K)$ with
$x_0,\ldots,x_n\in K$ is 
\begin{equation}
h(x) = \frac{1}{[K:\Q]}\sum_{v} d_v\log\max\{|x_0|_v,\ldots,|x_n|_v\}
\end{equation}
where the sum runs over all places $v$ of $K$.
The value $h(x)$ is independent of the choice of projective
coordinates by the Product Formula, and for $x \in \P^1(\Q)$ this $h(x)$ coincides with the height defined in the previous paragraph. 
Moreover, the height
does not change when replacing $K$ by another number
field that contains the coordinates of $x$.
Therefore, $h(\cdot)$ is a well-defined function
\begin{equation}\label{EqNaiveHt}
h \colon \P^n(\IQbar) \rightarrow [0,\infty).
\end{equation}
We call this function the \textit{naive height function} on $\P_{\IQbar}^n$.

\subsection{Height Machine}
We refer to \cite[Chap.2]{BG} and \cite[B.3]{DG2000}.

Let $X$ be an irreducible projective variety defined over $\IQbar$. Denote by $\R^{X(\IQbar)}$ the set of functions $X(\IQbar) \rightarrow \R$, and by $O(1)$ the subset of bounded functions.

The \textit{Height Machine} 
associates to each line bundle $L \in \mathrm{Pic}(X)$ a unique class of functions $\R^{X(\IQbar)}/O(1)$, \textit{i.e.} a map
\begin{equation}\label{EqHtMachine}
\mathbf{h}_X \colon \mathrm{Pic}(X) \rightarrow \R^{X(\IQbar)}/O(1), \quad L \mapsto \mathbf{h}_{X,L}.
\end{equation}
Let $h_{X,L} \colon X(\IQbar) \rightarrow \R$ a representative of the class $\mathbf{h}_{X,L}$; it is called a \textit{height function associated with $(X,L)$}. 

One can construct $h_{X,L}$ as follows. In each case below, $h_{X,L}$ depends on some extra data and hence is not unique. However, it can be shown that any two choices differ by a bounded functions on $X(\IQbar)$, and thus the class of $h_{X,L}$ is well-defined.
\begin{enumerate}
\item[(i)] If $L$ is very ample, then the global sections of $L$ give rise to a closed immersion $\iota \colon X \rightarrow \P^n$ for some $n$. Set $h_{X,L} = h\circ \iota$, with $h$ the naive height function on $\P^n$ from \eqref{EqNaiveHt}.
\item[(ii)] If $L$ is ample, then $L^{\otimes m}$ is very ample for some $m \gg 1$. Set $h_{X,L} = (1/m) h_{X,L^{\otimes m}}$.
\item[(iii)] For an arbitrary $L$, there exist ample line bundles $L_1$ and $L_2$ on $X$ such that $L \cong L_1 \otimes L_2^{\otimes -1}$. Set $h_{X,L} = h_{X,L_1} - h_{X,L_2}$.
\end{enumerate}

Here are some basic properties of the Height Machine. These properties, or more precisely properties (i)-(iii), also uniquely determine \eqref{EqHtMachine}.
\begin{prop}
We have
\begin{enumerate}
\item[(i)] (Normalization) Let $h$ be the naive height function from \eqref{EqNaiveHt}. Then for all $x \in \P^n(\IQbar)$, we have
\[
h_{\P^n,\cO(1)}(x) = h(x) + O(1).
\]
\item[(ii)] (Functoriality) Let $\phi \colon X \rightarrow Y$ be a morphism of irreducible projective varieties and let $L$ be a line bundle on $Y$. Then for all $x \in X(\IQbar)$, we have
\[
h_{X,\phi^*L}(x) = h_{Y, L}(\phi(x)) + O(1).
\]
\item[(iii)] (Additivity) Let $L$ and $M$ be two line bundles on $X$. Then for all $x \in X(\IQbar)$, we have
\[
h_{X, L\otimes M}(x) = h_{X,L}(x) + h_{X,M}(x) + O(1).
\]
\item[(iv)] (Positivity) If $s \in H^0(X,L)$ is a global section, then for all $x \in (X\setminus \mathrm{div}(s))(\IQbar)$ we have
\[
h_{X,L}(x) \ge O(1).
\]
\item[(v)] (Northcott property) Assume $L$ is ample. Let $K_0$ be a number field on which $X$ is defined. Then for any $d \ge 1$ and any constant $B$, the set
\[
\{x \in X(K) : [K:K_0] \le d, ~ h_{X,L}(x) \le B\}
\]
is a finite set.
\end{enumerate}
\end{prop}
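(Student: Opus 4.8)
The plan is to derive the five assertions of the proposition from the explicit construction of $h_{X,L}$ recalled just before it, after isolating one nontrivial lemma on morphisms to projective space; the uniqueness claim is then a formal consequence of Normalization, Functoriality and Additivity. All of this is classical (see \cite[Chap.2]{BG} or \cite[B.3]{DG2000}); here is the skeleton.

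First I would record the facts about the naive height $h$ on $\P^n$ that will serve as black boxes: $h \ge 0$ (scale one coordinate to $1$ and use $\max_i |x_i|_v \ge 1$ at each place $v$); for any linear map $\Lambda \colon \P^N \dashrightarrow \P^n$ one has $h(\Lambda(y)) \le h(y) + O(1)$ wherever $\Lambda$ is defined (estimate coordinates place by place, the error absorbing the finitely many places where a coefficient of $\Lambda$ has absolute value different from $1$); the exact Segre identity $h(\mathrm{seg}(y,y')) = h(y) + h(y')$; and the classical Northcott finiteness statement over $\P^n$, that there are only finitely many points of $\P^n(\IQbar)$ of bounded height and bounded degree over $\Q$ (such a point is cut out by finitely many algebraic numbers of bounded degree with all archimedean conjugates bounded, hence with bounded-height minimal polynomials, of which there are only finitely many).

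The crux is a Compatibility Lemma: if $V \subseteq H^0(X,L)$ is a base-point-free linear subsystem on a projective variety $X$, with associated morphism $\phi_V \colon X \to \P^n$, then $h \circ \phi_V = h_{X,L} + O(1)$; in particular every morphism $f \colon X \to \P^n$ satisfies $h_{X, f^*\cO(1)} = h \circ f + O(1)$, taking $V$ to be the span of the pullbacks of the coordinate functions. When $L$ is very ample, using its complete linear system reduces matters to the case where $\phi_V$ is the restriction to $Z := \phi_L(X) \subseteq \P^N$ of a linear projection $\Lambda$ whose centre is disjoint from $Z$; here $h \circ \phi_V \le h_{X,L} + O(1)$ is the trivial linear bound recorded above, and the reverse inequality comes down to $h(\Lambda(y)) \ge h(y) - O(1)$ for $y \in Z$, which is where the Nullstellensatz enters --- the linear forms defining $\Lambda$ together with the equations of $Z$ have no common zero on $Z$, so a power of each ambient coordinate lies in the ideal they generate, and evaluating these identities at $y \in Z$ and estimating place by place yields the bound. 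A general globally generated $L$ is reduced to the very ample case by choosing an auxiliary very ample $A$ with $L \otimes A$ very ample, multiplying the sections of $V$ with a basis of $H^0(X,A)$, and invoking the Segre identity. I expect this lemma --- and specifically making the Nullstellensatz estimate work --- to be the main obstacle; the rest is bookkeeping.

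Granting the Compatibility Lemma, the assertions follow. Normalization: the complete linear system of $\cO(1)$ on $\P^n$ is the identity up to a linear automorphism, so $h_{\P^n, \cO(1)} = h + O(1)$. Additivity: for $L_1, L_2$ very ample, the composite of embeddings for $L_1$ and $L_2$ with the Segre embedding is a closed immersion realizing $L_1 \otimes L_2$, and the exact Segre identity gives $h_{X, L_1 \otimes L_2} = h_{X, L_1} + h_{X, L_2} + O(1)$; the general case (and the independence of the construction from the chosen decomposition) reduces to this by writing any line bundle as a difference of very ample ones. Functoriality: by Additivity reduce to $L$ very ample on $Y$ with embedding $\psi$; then $\phi^*L = (\psi \circ \phi)^*\cO(1)$ and the Compatibility Lemma gives $h_{X, \phi^*L} = h \circ (\psi \circ \phi) + O(1) = h_{Y,L} \circ \phi + O(1)$. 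Positivity: choose $A$ very ample on $X$ with $A \otimes L$ very ample, realize both by embeddings $\phi$ and $\psi$; multiplying the coordinate sections of $A$ by $s$ exhibits $\phi$ as a fixed linear projection of $\psi$ over $X \setminus \mathrm{div}(s)$, so there $h_{X,A} \le h_{X, A \otimes L} + O(1)$, whence $h_{X,L} = h_{X, A \otimes L} - h_{X, A} + O(1) \ge -O(1)$. Northcott: pick $m$ with $L^{\otimes m}$ very ample, so $h_{X,L} = \tfrac{1}{m}\, h \circ \iota + O(1)$ for a closed immersion $\iota \colon X \hookrightarrow \P^n$; a bound on $h_{X,L}$ and on $[K:K_0]$ bounds $h(\iota(x))$ and the degree of $\iota(x)$ over $\Q$, so the Northcott statement over $\P^n$ applies. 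Finally, uniqueness: a class-valued map satisfying Normalization, Functoriality and Additivity agrees with $\mathbf{h}_X$ on very ample bundles (by Normalization and Functoriality), hence on ample bundles (passing to a very ample power and using Additivity), hence on all bundles (writing them as differences of ample bundles and using Additivity again).
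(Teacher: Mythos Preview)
The paper does not give its own proof of this proposition: it is stated as a standard fact about the Height Machine, with references to \cite[Chap.2]{BG} and \cite[B.3]{DG2000} at the start of the subsection, and followed only by the remark that the $O(1)$'s depend on the varieties, line bundles, morphisms and representative choices but not on the points. Your proposal is a correct and carefully organized sketch of the standard textbook argument from those very references, so there is nothing to compare --- you have supplied what the paper deliberately omits.
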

The $O(1)$'s that appear in the proposition depend on the varieties, line bundles, morphisms, and the choices of the representatives in the classes of height functions. But they are independent of the points on the varieties.

In applications, we often do not have projective varieties, but only quasi-projective varieties. For example, $f \colon X \rightarrow Y$ a morphism between quasi-projective varieties. Then $f$ can be viewed as a rational map $\xymatrix{ X \ar@{-->}[r] & Y}$. In this case, we have the following result of Silverman.

\begin{thm}\label{ThmHtTriangularIneq}
Let $\xymatrix{f\colon  X \ar@{-->}[r] & Y}$ be a generically finite rational map between projective varieties. Let $L$ be an ample line bundle on $X$ and $M$ be an ample line bundle on $Y$. Then
\begin{enumerate}
\item[(i)] there exist constants $c_1>0$ and $c_2$ such that $h_{Y,M}(f(x)) \le c_1 h_{X,L}(x) + c_2$ for all $x \in X(\IQbar)$ such that $f(x)$ is well-defined;
\item[(ii)] there exist constants $c_1' > 0$, $c_2'$ and a Zariski open dense subset $U \subseteq X$ such that $h_{Y,M}(f(x)) \ge c_1' h_{X,L}(x) - c_2'$ for all $x \in U(\IQbar)$.
\end{enumerate}
\end{thm}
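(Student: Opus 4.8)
The plan is to reduce both inequalities to the following elementary consequence of the Height Machine, which I will call the \emph{key lemma}: if $A$ is an ample line bundle on a projective variety $V$ and $N$ is any line bundle on $V$, then there are constants $c>0$ and $c'$ with $h_{V,N}(v)\le c\, h_{V,A}(v)+c'$ for all $v\in V(\IQbar)$. Indeed, for $m\gg 0$ the line bundle $A^{\otimes m}\otimes N^{\otimes -1}$ is very ample, hence globally generated, so Functoriality together with the fact that the naive height on projective space is non-negative gives $h_{V,A^{\otimes m}\otimes N^{-1}}\ge O(1)$ on all of $V(\IQbar)$; writing $N\cong A^{\otimes m}\otimes(A^{\otimes m}\otimes N^{-1})^{\otimes -1}$ and using Additivity yields the claim. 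In words: heights with respect to an ample bundle dominate, up to a positive scalar and an additive constant, heights with respect to an arbitrary bundle, but only as an \emph{upper} bound.

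For part (i): pulling back along the normalization of $X$ changes heights only by $O(1)$ (Functoriality) and does not shrink the domain of $f$, so we may assume $X$ is normal. Then the indeterminacy locus of $f$ has codimension $\ge 2$, so composing $f$ with an embedding $Y\hookrightarrow \P^N$ attached to $M^{\otimes d}$ and extending over $X$, we obtain a line bundle $\mathcal L$ on $X$ and sections $t_0,\dots,t_N\in H^0(X,\mathcal L)$ whose common zero locus avoids the domain of $f$ and with $f=[t_0:\cdots:t_N]$ there. Comparing via Functoriality and Normalization gives $d\cdot h_{Y,M}(f(x))=h_{X,\mathcal L}(x)+O(1)$ for $x$ in the domain of $f$, and the key lemma applied to $(X,L,\mathcal L)$ finishes (i). Note that generic finiteness is not used here.

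For part (ii): let $\Gamma\subseteq X\times Y$ be the closure of the graph of $f$, with projections $p\colon \Gamma\to X$ and $q\colon\Gamma\to Y$; then $p$ is a proper birational morphism, an isomorphism over the domain of $f$, and — after replacing $Y$ by the closure of $f(X)$ and $M$ by its restriction, which changes $h_{Y,M}$ by $O(1)$ — the morphism $q$ is dominant and generically finite, so $\dim\Gamma=\dim Y$. Hence $q^*M$ is big and nef on $\Gamma$: nef since $M$ is nef, big since the pullback of an ample bundle under a dominant generically finite morphism is big. By Kodaira's lemma there are $m\gg 0$, an ample line bundle $A'$ on $\Gamma$, and an effective line bundle $E$ on $\Gamma$ with a global section $s$ such that $(q^*M)^{\otimes m}\cong A'\otimes E$. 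Applying the key lemma to $(\Gamma,A',p^*L)$, then Additivity and Positivity (so $h_{\Gamma,E}\ge O(1)$ off $\div(s)$), we get, for every $\gamma\in\Gamma(\IQbar)$ with $\gamma\notin\div(s)$,
\[
h_{\Gamma, p^*L}(\gamma)\ \le\ c\, h_{\Gamma,A'}(\gamma)+O(1)\ \le\ cm\, h_{\Gamma,q^*M}(\gamma)+O(1).
\]
Let $U$ be the complement in $X$ of the images of $\div(s)$ and of the exceptional locus of $p$; since $\div(s)$ is a divisor in $\Gamma$ and $\dim\Gamma=\dim X$, these images are proper closed, so $U$ is dense open. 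For $x\in U(\IQbar)$ with $\gamma=p^{-1}(x)$, Functoriality gives $h_{X,L}(x)=h_{\Gamma,p^*L}(\gamma)+O(1)$ and $h_{Y,M}(f(x))=h_{\Gamma,q^*M}(\gamma)+O(1)$, and rearranging the display yields $h_{Y,M}(f(x))\ge (cm)^{-1}h_{X,L}(x)-O(1)$, which is (ii).

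The main obstacle is concentrated in part (ii): one must know that $q^*M$ is big, which is exactly where generic finiteness enters and where the statement would fail without it, and one must pay for the passage through Kodaira's lemma by discarding the effective error divisor $\div(s)$ — it is precisely this error term that forces the conclusion of (ii) to hold only on a Zariski open dense $U$ rather than on all of $X$. Everything else is bookkeeping with the five properties of the Height Machine recorded above, together with the standard facts that external tensor products and pullbacks interact well with ampleness, nefness and bigness.
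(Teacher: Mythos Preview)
The paper does not give its own proof; it simply cites Silverman, remarking that (i) is ``an easy application of the triangular inequality'' and that (ii) is ``highly non-trivial.'' Your argument for (ii) is correct and is in fact Silverman's proof: pass to the graph, observe that $q^*M$ is big and nef because $q$ is generically finite and dominant, invoke Kodaira's lemma to write $(q^*M)^{\otimes m}\cong A'\otimes E$, and throw away the support of $E$ together with the exceptional locus. Your closing paragraph correctly identifies where generic finiteness enters and why the open set $U$ is unavoidable.

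For (i) your strategy is sound but one step is not quite justified. On a normal variety $X$ with $U\subset X$ open of codimension $\ge 2$ complement, a line bundle on $U$ extends to a reflexive rank-one sheaf on $X$, but this need not be locally free unless $X$ is locally factorial (e.g.\ smooth); so ``we obtain a line bundle $\cL$ on $X$'' is not automatic from normality alone. The fix is painless: either pass to a resolution of singularities rather than the normalization (your height comparison via Functoriality still goes through, since $\pi^*L$ pulls back correctly and you only evaluate at points over $\mathrm{dom}(f)$), or bypass the extension entirely and argue as the paper's phrase ``triangular inequality'' suggests---embed $X\hookrightarrow\P^n$ and $Y\hookrightarrow\P^m$ via powers of $L$ and $M$, write the composed rational map by homogeneous forms $F_0,\ldots,F_m$ of some degree $d$, and use the elementary estimate $h([F_0(x):\cdots:F_m(x)])\le d\,h(x)+O(1)$ valid wherever the $F_i$ do not all vanish. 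Either route closes the gap, and neither uses generic finiteness, as you noted.
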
 

While part (i) \cite[Lem.4]{Silverman:heightest11} is an easy application of the \textit{triangular inequality}, part (ii) \cite[Thm.1]{Silverman:heightest11} is highly non-trivial.

\subsection{N\'{e}ron--Tate height function on abelian varieties}
We refer to \cite[Chap.9]{BG} and \cite[B.5]{DG2000}.

In this subsection, we turn to abelian varieties. Let $A$ be an abelian variety and $L$ be a line bundle on $A$. Assume furthermore that $L$ is symmetric, \textit{i.e.} $L \cong [-1]^*L$.

The Tate Limit Process provides a distinguished representative in the class of height functions associated with $(A,L)$ provided by the Height Machine \eqref{EqHtMachine}. Indeed, let $h_{A,L}$ be a representative of this class, and set
\begin{equation}\label{EqNTHeight}
\hat{h}_{A,L}(x) := \lim_{N\rightarrow \infty} \frac{h_{A,L}([2^N]x)}{4^N}.
\end{equation}
The function $\hat{h}_{A,L}$ is called the \textit{canonical height} or \textit{N\'{e}ron--Tate height} on $A$ with respect to $L$. It satisfies, and is uniquely determined by, the following properties.\footnote{In particular, $\hat{h}_{A,L}$ does not depend on the choice of the representative $h_{A,L}$ in \eqref{EqNTHeight}.}
\begin{prop}
We have, for all $x \in A(\IQbar)$,
\begin{enumerate}
\item[(i)] $\hat{h}_{A,L}(x) = h_{A,L}(x) + O(1)$;
\item[(ii)] $\hat{h}_{A,L}([N]x) = N^2 \hat{h}_{A,L}(x)$ for all $N \in \Z$.
\end{enumerate}
\end{prop}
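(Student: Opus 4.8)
The plan is to run the Tate Limit Process, the only analytic input being a single one-step comparison estimate. First I would record that since $L$ is symmetric, the theorem of the cube gives $[2]^*L \cong L^{\otimes 4}$, and more generally $[N]^*L \cong L^{\otimes N^2}$ for every $N \in \Z$ (the case $N<0$ reducing to $N>0$ via $[N]x = [-1]([|N|]x)$ together with $[-1]^*L \cong L$, which is exactly the symmetry hypothesis; the case $N=0$ being trivial). Feeding this into Functoriality and Additivity of the Height Machine, applied to a fixed representative $h_{A,L}$ of $\mathbf h_{A,L}$, yields for each $N$ a constant $C_N$ with $|h_{A,L}([N]x) - N^2 h_{A,L}(x)| \le C_N$ for all $x \in A(\IQbar)$. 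I will use this for $N=2$ (write $C := C_2$) in the convergence step and for general $N$ in the proof of (ii).

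Next I would show the limit in \eqref{EqNTHeight} exists. Writing $a_M(x) = h_{A,L}([2^M]x)/4^M$ and applying the $N=2$ estimate with $x$ replaced by $[2^M]x$ gives $|a_{M+1}(x) - a_M(x)| \le C/4^{M+1}$, so $(a_M(x))_M$ is Cauchy and $\hat h_{A,L}(x) := \lim_M a_M(x)$ is well-defined. Telescoping the same estimate bounds $|\hat h_{A,L}(x) - h_{A,L}(x)| \le \sum_{M \ge 0} C/4^{M+1} = C/3$ uniformly in $x$, which is statement (i); applying this bound to two different representatives of $\mathbf h_{A,L}$ in turn shows that $\hat h_{A,L}$ does not depend on the chosen $h_{A,L}$, and that $\hat h_{A,L}$ is itself a representative of $\mathbf h_{A,L}$.

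For (ii), fix $N \in \Z$ and apply the definition \eqref{EqNTHeight} to the point $[N]x$: since $[2^M]([N]x) = [N]([2^M]x)$, the estimate from the first step gives $h_{A,L}([N][2^M]x) = N^2 h_{A,L}([2^M]x) + O(1)$ with the error bounded uniformly in $M$ (by $C_N$); dividing by $4^M$ and letting $M \to \infty$ yields $\hat h_{A,L}([N]x) = N^2 \hat h_{A,L}(x)$. Finally, for uniqueness: if $g \colon A(\IQbar)\to\R$ satisfies both (i) and (ii), then $g - \hat h_{A,L}$ is bounded by (i) but satisfies $(g-\hat h_{A,L})([2^M]x) = 4^M (g-\hat h_{A,L})(x)$ by (ii), which forces $g - \hat h_{A,L} \equiv 0$.

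The only step that is not purely formal is the input $[N]^*L \cong L^{\otimes N^2}$ for symmetric $L$ — i.e. the theorem of the cube — and the observation that the symmetry hypothesis is precisely what reconciles the $N$ and $-N$ cases; everything else is the standard Cauchy/telescoping argument. I expect no genuine obstacle beyond invoking these classical facts correctly.
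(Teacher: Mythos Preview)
Your argument is correct and is exactly the standard Tate Limit Process: the theorem of the cube gives $[N]^*L \cong L^{\otimes N^2}$ for symmetric $L$, the Height Machine turns this into the one-step estimate, and then the Cauchy/telescoping computation does the rest. The paper itself does not supply a proof of this proposition --- it is stated as a standard fact with references to \cite[Chap.~9]{BG} and \cite[B.5]{DG2000} --- and what you have written is precisely the textbook argument found there, so there is nothing to compare.
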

Note that (i) implies that $\hat{h}_{A,L}$ is in the same class of height functions as $h_{A,L}$. 
The bounded function $O(1)$ in (i) depends on $A$, $L$ and the choice of the representative $h_{A,L}$ in the class of height functions. 

In practice, we often work with symmetric ample line bundles. We have the following theorem.
\begin{thm}
Assume $L$ is ample. Then
\begin{enumerate}
\item[(i)] $\hat{h}_{A,L}(x) \ge 0$ for all $x \in A(\IQbar)$;
\item[(ii)] $\hat{h}_{A,L}(x) = 0$  if and only if $x \in A(\IQbar)_{\mathrm{tor}}$;
\item[(iii)] $\hat{h}_{A,L}$ extends $\R$-linearly to a positive definite quadratic form $A(\IQbar)\otimes_{\Q}\R \rightarrow \R$, which by abuse of notation is still denoted by $\hat{h}_{A,L}$.
\end{enumerate}
\end{thm}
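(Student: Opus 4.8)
The plan is to deduce everything from the two properties of $\hat h_{A,L}$ recorded above --- namely $\hat h_{A,L} = h_{A,L} + O(1)$ and $\hat h_{A,L}([N]x) = N^2\hat h_{A,L}(x)$ --- together with the Northcott property for the ample bundle $L$ and a single piece of geometric input, the theorem of the cube. All of this is classical; one finds it in \cite[Chap.9]{BG} and \cite[B.5]{DG2000}, and the point of the sketch is only to indicate how the pieces fit.

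For (i): since $L$ is ample, some $L^{\otimes m}$ is very ample, so a representative $h_{A,L}$ is (up to the scaling factor $1/m$) the pullback of a naive height along a projective embedding, and naive heights are nonnegative; hence $\hat h_{A,L} = h_{A,L}+O(1)$ is bounded below on all of $A(\IQbar)$ by some $-c'$. By homogeneity, for every $x$ and every $N\ge 1$ we get $\hat h_{A,L}(x) = 4^{-N}\hat h_{A,L}([2^N]x)\ge -c'\,4^{-N}$, and letting $N\to\infty$ yields $\hat h_{A,L}(x)\ge 0$. For (ii): homogeneity applied to $0=[2]\cdot 0$ forces $\hat h_{A,L}(0)=4\hat h_{A,L}(0)$, so $\hat h_{A,L}(0)=0$; thus if $[m]x=0$ with $m\ge 1$ then $m^2\hat h_{A,L}(x)=\hat h_{A,L}([m]x)=0$, so $\hat h_{A,L}(x)=0$. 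Conversely, if $\hat h_{A,L}(x)=0$ then $\hat h_{A,L}([m]x)=m^2\cdot 0=0$ for all $m\in\Z$, so $h_{A,L}([m]x)=\hat h_{A,L}([m]x)+O(1)$ is bounded uniformly in $m$; as all the points $[m]x$ lie in $A(K)$ for one number field $K$ over which $x$, $A$ and $L$ are defined, the Northcott property for the ample $L$ forces $\{[m]x:m\in\Z\}$ to be finite, whence $[m]x=[m']x$ for some $m\ne m'$ and $x\in A(\IQbar)_{\mathrm{tor}}$.

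For (iii): the geometric input is that, $L$ being symmetric, the theorem of the cube gives an isomorphism of line bundles on $A\times A$
\[
s^{*}L \otimes d^{*}L \;\cong\; p_1^{*}L^{\otimes 2}\otimes p_2^{*}L^{\otimes 2},
\]
where $s,d\colon A\times A\to A$ send $(x,y)$ to $x+y$ and $x-y$ and $p_1,p_2$ are the projections. Pushing this through the Height Machine (functoriality and additivity) gives the parallelogram identity up to a bounded error,
\[
h_{A,L}(x+y) + h_{A,L}(x-y) = 2h_{A,L}(x) + 2h_{A,L}(y) + O(1),
\]
with the $O(1)$ independent of $x,y$; replacing $(x,y)$ by $([2^{N}]x,[2^{N}]y)$, dividing by $4^{N}$ and letting $N\to\infty$ kills the error term, so $\hat h_{A,L}$ satisfies the exact parallelogram law. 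Together with $\hat h_{A,L}(0)=0$, this is the classical condition (valid for $\R$-valued functions, so that division by $2$ is available) guaranteeing that
\[
\langle x,y\rangle \;:=\; \tfrac12\bigl(\hat h_{A,L}(x+y) - \hat h_{A,L}(x) - \hat h_{A,L}(y)\bigr)
\]
is a symmetric $\Z$-bilinear form on $A(\IQbar)$ with $\langle x,x\rangle = \hat h_{A,L}(x)$. By (i) and (ii) this form is positive semi-definite with radical exactly $A(\IQbar)_{\mathrm{tor}}$, so it descends to a positive definite $\Z$-bilinear form on the torsion-free group $A(\IQbar)/A(\IQbar)_{\mathrm{tor}}$ and extends uniquely $\R$-bilinearly to $\bigl(A(\IQbar)/A(\IQbar)_{\mathrm{tor}}\bigr)\otimes_{\Z}\R = A(\IQbar)\otimes_{\Q}\R$; positive definiteness persists on the real vector space because every vector lies in the $\R$-span of a finitely generated subgroup, on which positivity of a lattice quadratic form extends to the ambient finite-dimensional real space. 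Setting $\hat h_{A,L}(v):=\langle v,v\rangle$ then gives the asserted quadratic form.

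The only genuinely non-formal ingredient is the line-bundle parallelogram isomorphism coming from the theorem of the cube and the symmetry of $L$; once that is in hand, parts (i), (ii) and the bilinearity in (iii) are bookkeeping with the Height Machine and the Tate limit, and the extension to $A(\IQbar)\otimes_{\Q}\R$ is routine, its only subtle point being the use of the torsion characterization of (ii) to know that the form is definite rather than merely semi-definite.
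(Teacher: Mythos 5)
The paper itself does not prove this theorem; it is quoted from the standard references \cite[Chap.9]{BG} and \cite[B.5]{DG2000}, and your route is the standard textbook one: Tate limit plus Northcott for (i) and (ii), and the cube-theorem parallelogram isomorphism for the quadraticity in (iii). Parts (i), (ii), and the construction of the symmetric bilinear form in (iii) are correct.

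There is, however, a genuine gap at the last step of (iii). You assert that ``positivity of a lattice quadratic form extends to the ambient finite-dimensional real space.'' This is false as a general principle: the form $q(x,y)=(x-\alpha y)^2$ on $\Z^2$, with $\alpha$ irrational, is positive definite on the lattice (it vanishes only when $x=\alpha y$, which has no nonzero integer solutions) but is only positive semi-definite on $\R^2$. To get positive definiteness of $\hat h_{A,L}$ on $\Gamma\otimes_{\Z}\R$ for a finitely generated subgroup $\Gamma\le A(K)$ you must invoke Northcott once more: the set $\{x\in\Gamma : \hat h_{A,L}(x)\le B\}$ is finite for every $B$, whereas if the real extension of the form had a nontrivial null space $V$, then either $V$ contains a nonzero lattice vector (giving infinitely many lattice points of height $0$) or the orthogonal projection away from $V$ embeds a rank-$n$ lattice into a space of dimension $<n$, hence has non-discrete image, again giving infinitely many lattice points of bounded height. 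Either way this contradicts the Northcott finiteness, so the null space is trivial. With this discreteness input supplied, the rest of your argument goes through.
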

In the context where the abelian variety is clear, we often abbreviate $\hat{h}_{A,L}$ by $\hat{h}_L$.

We close this section by discussing the relative setting. Let $S$ be an irreducible variety and let 
$\pi \colon \cA\rightarrow S$ be an abelian scheme of relative dimension $g \ge 1$. 
Let $\cL$ be a relatively ample line bundle on $\cA/S$ such that $[-1]^*\cL \cong \cL$. In particular over each $s \in S(\IQbar)$, the line bundle $\cL_s := \cL|_{\cA_s}$ on $\cA_s :=  \pi^{-1}(s)$ is ample and symmetric. The \textit{fiberwise N\'{e}ron--Tate height} with respect to $\cL$ is defined to be
\begin{equation}\label{EqFiberwiseNTHeight}
\hat{h}_{\cA,\cL} \colon \cA(\IQbar) \rightarrow [0,\infty) , \quad x \mapsto \hat{h}_{\cA_{\pi(x)}, \cL_{\pi(x)}}(x).
\end{equation}
In the rest of the paper, we often abbreviate it as $\hat{h}_{\cL}$.

We close this section with the following theorem of Silverman--Tate; see \cite[Thm.A]{Silverman} and \cite[Thm.A.1]{DGHUnifML}. Let $\cM$ be an ample line bundle on $\overline{S}$, a compactification of $S$. Then the Height Machine provides a height function $h_{\overline{S},\cM} \colon S(\IQbar) \rightarrow \R$.
\begin{thm}
There exists a constant $c = c(\cA/S, \cL, \cM) > 0$ such that 
\[
|\hat{h}_{\cL}(x) - h_{\cA,\cL}(x)| \le c \max\{1,h_{\overline{S},\cM}(\pi(x))\} \quad \text{ for all }x \in \cA(\IQbar).
\]
\end{thm}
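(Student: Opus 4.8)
The plan is to prove the Silverman--Tate inequality
\[
|\hat{h}_{\cL}(x) - h_{\cA,\cL}(x)| \le c \max\{1,h_{\overline{S},\cM}(\pi(x))\} \quad \text{ for all }x \in \cA(\IQbar)
\]
by compactifying everything in sight and applying the Height Machine (functoriality and additivity) together with the Tate limit process. First I would choose a projective compactification $\overline{\cA}$ of $\cA$ together with a morphism $\bar\pi\colon \overline{\cA}\to\overline{S}$ extending $\pi$, and extend the relatively ample symmetric line bundle $\cL$ to a line bundle $\overline{\cL}$ on $\overline{\cA}$ (possibly after twisting by a large multiple of $\bar\pi^*\cM$, so that $\overline{\cL}$ is ample on $\overline{\cA}$). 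Fix a height function $h_{\overline{\cA},\overline{\cL}}$ from the Height Machine; its restriction to $\cA(\IQbar)$ differs from any chosen $h_{\cA,\cL}$ by $O(1)$, so it suffices to bound $|\hat h_{\cL}(x) - h_{\overline{\cA},\overline{\cL}}(x)|$.

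The core of the argument is to run the Tate limit process fiberwise but with a \emph{uniform} error control. The multiplication-by-$2$ map $[2]\colon\cA\to\cA$ over $S$ satisfies $[2]^*\cL \cong \cL^{\otimes 4}\otimes \pi^* \cN$ for some line bundle $\cN$ on $S$ (the discrepancy from exact $4$-divisibility is a pullback from the base, by the theorem of the cube applied fiber by fiber). Choosing $\overline{\cN}$ an extension of $\cN$ to $\overline{S}$ and writing $\overline{\cN} \cong \cM^{\otimes a}\otimes(\text{something effective minus effective})$, one gets from the Height Machine that there is a constant $c_1$ with
\[
\bigl| h_{\overline{\cA},\overline{\cL}}([2]x) - 4\, h_{\overline{\cA},\overline{\cL}}(x) \bigr| \le c_1 \max\{1, h_{\overline{S},\cM}(\pi(x))\}
\]
for all $x\in\cA(\IQbar)$; here $[2]$ is viewed as a rational self-map of $\overline{\cA}$ that is defined along $\cA$. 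Dividing by $4^N$, telescoping $\hat h_{\cL}(x) = \lim_N h_{\overline{\cA},\overline{\cL}}([2^N]x)/4^N$ (which equals the fiberwise Néron--Tate height since the limit is computed inside a single fiber $\cA_{\pi(x)}$), and using that $h_{\overline{S},\cM}(\pi([2^N]x)) = h_{\overline{S},\cM}(\pi(x))$ because $[2]$ commutes with $\pi$, one obtains
\[
\bigl|\hat h_{\cL}(x) - h_{\overline{\cA},\overline{\cL}}(x)\bigr| \le \sum_{N\ge 0}\frac{c_1}{4^N}\max\{1,h_{\overline{S},\cM}(\pi(x))\} = \tfrac{4}{3}c_1\max\{1,h_{\overline{S},\cM}(\pi(x))\},
\]
and absorbing the $O(1)$ between $h_{\overline{\cA},\overline{\cL}}$ and $h_{\cA,\cL}$ into the constant finishes the proof.

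The main obstacle is the uniform two-variable estimate for $h_{\overline{\cA},\overline{\cL}}([2]x) - 4 h_{\overline{\cA},\overline{\cL}}(x)$: the map $[2]$ does not extend to a morphism on $\overline{\cA}$, only to a rational map, so one must either resolve its indeterminacy by a blow-up $\rho\colon \widetilde{\cA}\to\overline{\cA}$ that is an isomorphism over $\cA$ and compare heights on $\widetilde{\cA}$ via functoriality, or work directly with an explicit projective model. One then needs the error term to be controlled \emph{linearly} by the base height rather than by, say, a height on $\overline{\cA}$ itself, which is exactly why the discrepancy line bundle $\cN$ must be pushed down to $\overline{S}$ and why the theorem-of-the-cube computation (the identity $[2]^*\cL\cong\cL^{\otimes 4}$ on each abelian-variety fiber, relativized up to a base twist) is the crucial input. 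Everything else — the telescoping, the geometric series, the comparison of competing height functions — is a routine application of the properties of the Height Machine recalled above, and the Northcott property plays no role here.
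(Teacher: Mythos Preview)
The paper does not give its own proof of this statement; it simply states it as the theorem of Silverman--Tate and points to \cite[Thm.~A]{Silverman} and \cite[Thm.~A.1]{DGHUnifML}. Your sketch is precisely the standard argument found in those references: compactify the abelian scheme and the base, extend $\cL$, use the theorem of the cube fiberwise to get $[2]^*\cL \cong \cL^{\otimes 4}\otimes \pi^*\cN$ for some line bundle $\cN$ on $S$, compare heights uniformly via the Height Machine, and telescope the Tate limit. You have also correctly identified the one genuine technical point, namely that $[2]$ does not extend to the compactification and one must resolve the indeterminacy (this is exactly what is done in \cite[App.~A]{DGHUnifML}). So your approach matches the cited proofs; there is nothing to compare against in the paper itself. (A trivial remark: the geometric series gives $c_1/3$ rather than $4c_1/3$, but this is immaterial.)
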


\section{Vojta's method}\label{SectionVojta}
In this section we give an overview of Vojta's approach to prove  the Mordell Conjecture.


Let $A$ be an abelian variety defined over $\IQbar$ equipped
with a very ample and symmetrical line bundle $L$. Then $L$ gives rise to a normalized height function $\hat{h}_L \colon A(\IQbar) \rightarrow [0,\infty)$ as constructed in \eqref{EqNTHeight}.


For $P,Q\in A(\IQbar)$  we set
$\langle P,Q\rangle = (\hat h_L(P+Q)-\hat h_L(P)-\hat h_L(Q))/2$
and often abbreviate $|P| = \hat h_L(P)^{1/2}$. The notation $|P|$ is
justified by the fact that it induces a norm after tensoring with the reals.

\subsection{Mordell conjecture}
The following fundamental inequalities are the keys to prove the finiteness of rational points on curves of genus at least $2$. They are called the \textit{Mumford inequality} (or \textit{Mumford's Gap Principle}) and the \textit{Vojta inequality}. We state them together.

\begin{thm}\label{ThmMumfordVojtaIneq}
Let $g \ge 2$ and $C$ be a smooth curve of genus at least $2$ defined over $\IQbar$. Let $P_0 \in C(\IQbar)$, and $j \colon C \rightarrow \mathrm{Jac}(C)$ be the Abel--Jacobi embedding via $P_0$. 

There exists a constant $R = R(C, P_0) > 0$ such that the following properties hold true. Consider all distinct points $P, Q\in C(\IQbar)$ such that $|j(Q)| \ge |j(P)| \ge R$ and
\begin{equation}\label{EqMumfordVojtaIneqAngle}
\langle j(P), j(Q) \rangle \ge \frac{3}{4}|j(P)| |j(Q)|,
\end{equation}
then we have
\begin{enumerate}
\item[(i)] (Mumford Inequality) $|j(Q)| \ge 2|j(P)|$.
\item[(ii)] (Vojta Inequality) there exists a constant $\kappa = \kappa(g) > 0$ such that $|j(Q)| \le \kappa |j(P)|$.
\end{enumerate}
\end{thm}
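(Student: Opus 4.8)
The plan is to prove the two inequalities by the classical method of Vojta's proof of the Mordell conjecture, in the streamlined form due to Faltings and Bombieri, working throughout with the normalized height $\hat h_L$ and the associated bilinear pairing $\langle\cdot,\cdot\rangle$, so that I can freely use quadratic identities with no error terms. Throughout, I set $|P|=\hat h_L(j(P))^{1/2}$ and regard $j(C)\subset\mathrm{Jac}(C)$. The key quantitative hypothesis is that the ``angle'' between $j(P)$ and $j(Q)$ is large: $\langle j(P),j(Q)\rangle\ge\frac34|j(P)||j(Q)|$, i.e.\ the unit vectors $j(P)/|j(P)|$ and $j(Q)/|j(Q)|$ are within angle $\arccos(3/4)$ of each other in the real vector space $\mathrm{Jac}(C)(\IQbar)\otimes\R$ equipped with the positive definite form $\hat h_L$.

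\textbf{Part (i), the Mumford inequality.} I would argue by contradiction: suppose $|j(P)|\le|j(Q)|<2|j(P)|$. Consider the difference $j(Q)-j(P)$, which for distinct $P,Q$ lies on the curve $C-C\subset\mathrm{Jac}(C)$ (the image of $C\times C$ under $(P,Q)\mapsto j(Q)-j(P)$). Expanding the quadratic form,
\[
|j(Q)-j(P)|^2=|j(Q)|^2-2\langle j(P),j(Q)\rangle+|j(P)|^2\le |j(Q)|^2-\tfrac32|j(P)||j(Q)|+|j(P)|^2.
\]
Writing $t=|j(Q)|/|j(P)|\in[1,2)$, the right side equals $|j(P)|^2(t^2-\tfrac32 t+1)$, and on $[1,2)$ one checks $t^2-\tfrac32 t+1\le \tfrac12\,t$ (equivalently $t^2-2t+1=(t-1)^2\le\tfrac12 t$ fails only... ) — the precise elementary estimate to be tuned is that $|j(Q)-j(P)|$ is small relative to $|j(P)|^{1/2}$. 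The point is Mumford's classical inequality: the height of a point on $C-C$ in $\mathrm{Jac}(C)$ is \emph{at least} a positive constant multiple of the height of its ``first coordinate'' minus a bounded term. More precisely, one uses that the map $C\times C\to\mathrm{Jac}(C)$, $(P,Q)\mapsto j(Q)-j(P)$, together with projection to the first factor $C$, gives (after pulling back an ample class) a height inequality $\hat h_L(j(Q)-j(P))\ge c\,\hat h_L(j(P))-c'$ valid off a proper subvariety, combined with the fact that $C$ has genus $\ge2$ so the relevant intersection numbers are positive. Comparing this lower bound with the upper bound from the angle hypothesis forces $|j(Q)|\ge 2|j(P)|$ once $|j(P)|\ge R$ with $R$ absorbing the bounded terms $c'$.

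\textbf{Part (ii), the Vojta inequality.} This is the deep part and the main obstacle. Here one applies Vojta's construction of an auxiliary ``index'' and the Product Theorem (Faltings) / Dyson-type lemma on the surface $C\times C$: given two points $P,Q$ with $|j(Q)|$ very large compared to $|j(P)|$ but satisfying the angle condition, one builds a global section of a suitable line bundle on $C\times C$ with controlled height, bounded index at $(P,Q)$ on one hand, and — via the Product Theorem combined with Mumford's inequality (applied iteratively along the chain $P,Q$ and using the angle hypothesis to keep all points ``clustered in one direction'') — forced to have large index at $(P,Q)$ on the other hand, a contradiction unless $|j(Q)|\le\kappa|j(P)|$ for an explicit $\kappa$ depending only on $g$ (through the dimension $g$ of $\mathrm{Jac}(C)$, the self-intersection data of the theta divisor, and the combinatorics of Vojta's choice of weights). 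The crucial feature, which is what makes $\kappa$ depend \emph{only} on $g$ and not on $C$, is that after passing to the normalized height and rescaling, all the geometric quantities entering Vojta's estimate — the genus, the Poincaré class, the arithmetic of the bilinear form — are uniform across curves of genus $g$; this is exactly the reformulation carried out in Vojta's \cite{Vojta:siegelcompact} and David--Philippon \cite{DaPh:07}, R\'emond \cite{Remond:Decompte}. I would therefore organize Part (ii) as: (a) recall Vojta's auxiliary polynomial and the definition of its index; (b) produce it with small height via a Siegel-lemma/Riemann--Roch count on $C\times C$, where the genus-$g$ bounds enter; (c) bound the index at $(P,Q)$ from below using the arithmetic of $P,Q$ and the telescoping of Mumford's inequality; (d) bound it from above using the Product Theorem; (e) reconcile (c) and (d) to extract $|j(Q)|\le\kappa(g)|j(P)|$. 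The main technical difficulty is step (d), the application of the Product Theorem with uniform-in-$C$ control, together with making sure the constant $R=R(C,P_0)$ (allowed to depend on $C$) soaks up all the non-uniform bounded error terms while $\kappa$ stays a function of $g$ alone.
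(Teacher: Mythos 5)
The paper does not actually prove Theorem~\ref{ThmMumfordVojtaIneq}: it is stated as a classical result and delegated to Vojta, Bombieri, and R\'emond's quantitative versions (\cite[Thm.3.2]{Remond:Decompte}, \cite[Thm.1.2]{Remond:Vojtasup}), so there is no in-paper argument to compare against; I can only judge your sketch on its own terms. Part (i) as written has a genuine gap. The lower bound you invoke, $\hat h_L(j(Q)-j(P)) \ge c\,\hat h_L(j(P)) - c'$ with an unspecified ``positive constant'' $c$ depending only on the first coordinate, is too weak to close the contradiction: your upper bound is $|j(Q)-j(P)|^2 \le (t^2-\tfrac32 t+1)\,|j(P)|^2$ with $t=|j(Q)|/|j(P)|$, and as $t\to 2^-$ the right-hand side tends to $2|j(P)|^2$, so you would need $c>2$, which no form of Mumford's inequality supplies. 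What Mumford's inequality actually gives is the precise two-variable bound
\[
\langle j(P), j(Q)\rangle \le \frac{1}{2g}\bigl(|j(P)|^2 + |j(Q)|^2\bigr) + O\bigl(|j(P)|+|j(Q)|+1\bigr),
\]
equivalently $|j(Q)-j(P)|^2 \ge (1-\tfrac1g)(|j(P)|^2+|j(Q)|^2) - O(\cdots)$, coming from the divisor-class computation on $C\times C$ (class of the diagonal, adjunction, $j(C)\cdot\Theta=g$). This is exactly where $g\ge 2$ enters: combined with $\langle j(P),j(Q)\rangle\ge\tfrac34|j(P)||j(Q)|$ and $\tfrac{1}{2g}\le\tfrac14$ one gets $t^2-3t+1\ge-\epsilon$ once $|j(P)|\ge R$, hence $t\ge\tfrac{3+\sqrt5}{2}-\epsilon'>2$. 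Your intermediate step ``$t^2-\tfrac32 t+1\le\tfrac12 t$, equivalently $(t-1)^2\le\tfrac12 t$'' is miscomputed (the first inequality is equivalent to $(t-1)^2\le 0$) and is left unfinished. Also, Mumford's inequality holds for \emph{all} distinct $P,Q$; the caveat ``valid off a proper subvariety'' does not belong here and suggests a conflation with the family height inequality of Theorem~\ref{ThmHtIneq}, which plays no role in this classical statement.

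Part (ii) is a reasonable prose outline of Vojta's method (auxiliary section of controlled height on $C\times C$, index lower bound, Product Theorem/Roth-type upper bound), and you correctly place the burden of making $\kappa$ depend only on $g$ on the R\'emond/David--Philippon refinements while letting $R(C,P_0)$ absorb the curve-dependent error terms. One inaccuracy: Mumford's inequality does not enter the index lower bound in your step (c); in the standard argument the lower bound on the index comes from the construction together with a Liouville/product-formula estimate at $(P,Q)$, and Mumford's inequality is used only afterwards, alongside Vojta's inequality, in the cone-counting that yields \eqref{EqLargePointOriginalBound}. As it stands this part is a citation in prose rather than a proof, which is in fact how the paper itself treats the statement.
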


Notice that these two inequalities hold true for all \textit{algebraic points}, not only rational points, on the curve $C$.

\medskip

Let us have a digest of the inequalities. 

We start with the assumptions of the properties. The hypothesis $|j(Q)| \ge |j(P)|$ can be assumed to hold true up to exchanging $P$ and $Q$. The assumption \eqref{EqMumfordVojtaIneqAngle} should be understood to be saying that the angle between $j(P)$ and $j(Q)$ is bounded above by a constant $\cos^{-1}(3/4)$. More precisely, if we fix a subgroup $\Gamma$ of $\mathrm{Jac}(C)(\IQbar)$ of finite rank and consider only $P, Q \in \Gamma$, then $j(P), j(Q) \in \Gamma \otimes_{\Q}\R$ and $(\Gamma\otimes_{\Q}\R, |\cdot |)$ is a normed Euclidean space of finite dimension, and $\langle j(P), j(Q) \rangle / |j(P)| |j(Q)|$ is precisely the angle between $j(P)$ and $j(Q)$. Observe that it is possible to divide $\Gamma\otimes_{\Q}\R$ into $7^{\mathrm{rk}\Gamma}$ cones $\Lambda$ such that each two points in the same cone satisfies \eqref{EqMumfordVojtaIneqAngle}.

Now we turn to the conclusions. Part (i) says that each two distinct points in a same cone $\Lambda$ are ``far'' from each other, while part (ii) says that they cannot be ``too far'' either, unless at least one of these two points has small norm. Now if there is a sequence of distinct points $P_0,P_1,\ldots,P_m$ in $\Lambda$ such that $|j(P_m)| \ge \cdots \ge |j(P_1)| \ge |j(P_0)| \ge R$, then $|j(P_m)| \ge 2|j(P_{m-1})| \cdots \ge 2^m |j(P_0)|$ by (i) and $|j(P_m)| \le \kappa |j(P_0)|$ by (ii). Thus $m \le \log\kappa/\log 2$. As there are $7^{\mathrm{rk}\Gamma}$ cones, we obtain
\begin{equation}\label{EqLargePointOriginalBound}
\#\{P \in \Gamma  :  |j(P)| \ge R \} \le (\log\kappa/\log 2 + 1) 7^{\mathrm{rk}\Gamma}.
\end{equation}

\medskip

Notice that \eqref{EqLargePointOriginalBound} suffices to prove the Mordell conjecture. Assume $C$ is defined over a number field $K$. Take $P_0 \in C(K)$ and the Abel--Jacobi embedding $j \colon C \rightarrow \mathrm{Jac}(C)$ via $P_0$. By the Mordell--Weil theorem, $\Gamma:=\mathrm{Jac}(C)(K)$ is a finitely generated group. Thus the set $\Gamma_{\mathrm{tor}}$ of torsion points in $\Gamma$ is a finite set. So to prove the finiteness of $C(K) \cong \Gamma \cap j(C)(\IQbar)$ we may identify $\Gamma$ with its image in $\Gamma \otimes_{\Z} \R$. Consider the Euclidean space $(\Gamma \otimes_{\Z}\R, |\cdot|)$. 
 By \eqref{EqLargePointOriginalBound}, to prove $\#C(K) < \infty$ it suffices to prove the finiteness of $C(K)_{\mathrm{small}} := \{P \in C(K) : |j(P)| < R\}$, or equivalently the finiteness of $j(C(K)_{\mathrm{small}})$. 
But after modulo the finite set $\Gamma_{\mathrm{tor}}$, $j(C(K)_{\mathrm{small}})$ is a subset of $\{z \in \Gamma : |z| < R\}$ which  
consists of lattice points of bounded norm and hence is immediately a finite set. Hence we are done.

\subsection{Relative setting}
Mumford's and Vojta's inequality (Theorem~\ref{ThmMumfordVojtaIneq}) can be realized in families. The first explicitly written result in this direction is de Diego \cite[Thm.2 and below]{deDiego:97}. The version we state here can be obtained as a consequence of R\'{e}mond's quantitative versions of the Mumford and the Vojta inequalities, \cite[Thm.3.2]{Remond:Decompte} and \cite[Thm.1.2]{Remond:Vojtasup}.

All varieties and morphisms below are assumed to be defined over $\IQbar$.

Let $S$ be an irreducible variety and let 
$\pi \colon \cA\rightarrow S$ be an abelian scheme of relative dimension $g \ge 1$. 
Let $\cL$ be a relatively ample line bundle on $\cA/S$ such that $[-1]^*\cL \cong \cL$. 
 We write $\hat h_{\cL} \colon \cA(\IQbar)\rightarrow [0,\infty)$ for the fiberwise N\'eron--Tate height \eqref{EqFiberwiseNTHeight}. 

 Moreover, let $\cM$ be an ample line bundle over a compactification $\bar{S}$ of $S$. Then we obtain a function  $h_{\bar{S},\cM} \colon \bar{S}(\IQbar)\rightarrow \R$ which is a representative of the height provided by the Height Machine \eqref{EqHtMachine}.

If $\mathfrak{C}$ is an irreducible closed subvariety of $\cA$ and $s\in
S(\IQbar)$, then we write
$\mathfrak{C}_s$ for $\pi|_{\mathfrak{C}}^{-1}(s)$. 

\begin{thm}
    \label{ThmLargePointsFamily}
  Let $\mathfrak{C} \subset \cA$ be an irreducible closed subvariety that
  dominates $S$ and such that $\mathfrak{C}\rightarrow S$
  is a flat family of curves of genus at least $2$. 
  Then there exists a constant $c=c(\pi,\cL,\cM ; \mathfrak{C})\ge 1$ with the following property.
  Suppose $s\in S(\IQbar)$ and  
 $\Gamma$ is a subgroup of $\cA_s(\IQbar)$ of finite rank
  $\rho \ge 0$, then 
  \begin{equation}\label{EqLargePointBoundFamily}
   \# \left\{ P \in \mathfrak{C}_s(\IQbar)\cap \Gamma : \hat h_{\cL}(P) > c
   \max\{1,h_{\bar{S},\cM}(s)\} \right\} \le c^{\rho}. 
  \end{equation}
\end{thm}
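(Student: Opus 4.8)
The plan is to reduce the statement to the absolute Mumford--Vojta inequalities (Theorem~\ref{ThmMumfordVojtaIneq}) in the form given by R\'{e}mond, applied fibrewise, while keeping track of how the error terms depend on the base point $s$ via $h_{\bar S, \cM}(s)$. First I would invoke R\'{e}mond's quantitative Mumford and Vojta inequalities (\cite[Thm.3.2]{Remond:Decompte}, \cite[Thm.1.2]{Remond:Vojtasup}): for a fixed abelian variety $B$ over $\IQbar$ with a symmetric ample line bundle, and a curve of genus $\ge 2$ in $B$, these give, for distinct points $P, Q$ in the curve lying in a common narrow cone of $B(\IQbar)\otimes\R$ and of sufficiently large N\'{e}ron--Tate height, both a gap estimate $|Q| \ge 2|P|$ and an upper bound $|Q| \le \kappa |P|$, where crucially the threshold $R$ above which ``sufficiently large'' holds is controlled \emph{linearly} by a Faltings-type height of $B$ (plus terms depending only on $g$ and the degree of the polarization). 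The decomposition of $\Gamma\otimes\R$ into $7^\rho$ cones is purely a statement about Euclidean geometry of a rank-$\rho$ group and contributes the factor $7^\rho \le c^\rho$.

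Next I would transfer the ``height of the fibre'' to $h_{\bar S, \cM}(s)$. The family $\mathfrak C \to S$ sits inside the abelian scheme $\cA \to S$; a classical comparison (of the type of the Silverman--Tate theorem stated at the end of $\mathsection$\ref{SectionHtMachine}, together with the fact that a suitable modular/Faltings height of the fibre $\cA_s$ is, up to a bounded multiple and an additive constant depending only on $\pi, \cL, \cM$, comparable to $\max\{1, h_{\bar S, \cM}(s)\}$) lets me replace the intrinsic height of $\cA_s$ appearing in R\'{e}mond's threshold by $c_0 \max\{1, h_{\bar S, \cM}(s)\}$ for a constant $c_0 = c_0(\pi, \cL, \cM)$. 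I also need the fibrewise N\'{e}ron--Tate height $\hat h_{\cL}$ restricted to $\cA_s$ to agree with the intrinsic $\hat h_{\cA_s, \cL_s}$ used in Theorem~\ref{ThmMumfordVojtaIneq} — but this is exactly the definition \eqref{EqFiberwiseNTHeight}, so no comparison is lost there. The constants coming from R\'{e}mond depend on $\cA \to S$ and on $\mathfrak C$ only through $g$, the polarization type (locally constant, hence bounded on $S$), and the degree of $\mathfrak C_s$ in $\cA_s$ (again bounded since $\mathfrak C \to S$ is flat); so they can all be absorbed into a single $c = c(\pi, \cL, \cM; \mathfrak C)$.

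Granting this, the counting argument is the one sketched after Theorem~\ref{ThmMumfordVojtaIneq}: fix $s$ and $\Gamma \subset \cA_s(\IQbar)$ of rank $\rho$; the set in \eqref{EqLargePointBoundFamily} consists of points $P \in \mathfrak C_s(\IQbar)\cap\Gamma$ with $\hat h_{\cL}(P) > c\max\{1, h_{\bar S,\cM}(s)\}$, and choosing $c$ at least as large as R\'{e}mond's threshold constant $c_0$ guarantees all these points exceed the threshold $R$ for the fibre $\cA_s$. Partition $\Gamma\otimes\R$ into $7^\rho$ cones; within each cone, Mumford forces the N\'{e}ron--Tate heights of the relevant points to grow at least geometrically with ratio $2$, while Vojta caps their ratio by $\kappa(g)$, so each cone contains at most $\log_2\kappa(g) + 1$ such points. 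Summing gives the bound $(\log_2\kappa(g)+1)\,7^\rho \le c^\rho$ after enlarging $c$ (using $\rho \ge 0$ and that we may take $c$ as large as we like; the case $\rho = 0$ needs $c \ge (\log_2\kappa(g)+1)$ as a bare constant, which is fine since the constant is allowed to depend on $g$).

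**Main obstacle.** The delicate point is the \emph{uniformity} of R\'{e}mond's constants across the family: one must check that R\'{e}mond's Mumford and Vojta inequalities, which are stated for a single curve in a single abelian variety, can be applied with a threshold $R$ that grows at most linearly in $\max\{1, h_{\bar S, \cM}(s)\}$ and with multiplicative constants ($\kappa$, the number $7$, the gap ratio $2$) that are genuinely independent of $s$ — i.e.\ depend only on $g$, the polarization type, and $\deg \mathfrak C_s$. This is where the precise form of R\'{e}mond's results (the explicit linear dependence of $R$ on the Faltings height of the abelian variety, and the bounded dependence of everything else on the geometric data) is essential, together with the comparison between the Faltings-type height of $\cA_s$ and $h_{\bar S, \cM}(s)$ via Silverman--Tate-type estimates.
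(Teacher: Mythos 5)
Your proposal follows essentially the same route the paper indicates: the paper itself only sketches the argument, pointing to R\'emond's quantitative Mumford and Vojta inequalities \cite[Thm.3.2]{Remond:Decompte}, \cite[Thm.1.2]{Remond:Vojtasup} applied fibrewise, the $7^{\rho}$-cone packing, and \cite[proof of Prop.8.1]{DGHUnifML} for details. The one ingredient you leave implicit is the (Arithmetic) B\'ezout Theorem, which the paper cites explicitly: R\'emond's thresholds depend not only on a height of the abelian variety $\cA_s$ (handled by your Silverman--Tate comparison) but also on a projective height of the curve $\mathfrak{C}_s$ itself, and it is arithmetic B\'ezout that bounds the latter linearly in $\max\{1,h_{\bar{S},\cM}(s)\}$; this fits inside the uniformity issue you already flag as the main obstacle.
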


It is possible to prove Theorem~\ref{ThmLargePointsFamily} by adapting appropriately the arguments in \cite{deDiego:97}. Alternatively, Theorem~\ref{ThmLargePointsFamily} can be proved more directly and with more explicit constants as a consequence of R\'{e}mond's quantitative versions of the Mumford and the Vojta inequalities, \cite[Thm.3.2]{Remond:Decompte} and \cite[Thm.1.2]{Remond:Vojtasup}, with the (Arithmetic) B\'{e}zout Theorem; see \cite[proof of Prop.8.1]{DGHUnifML} for more details.

\section{Basic setup and Statement of the New Gap Principle}\label{SectionNGP}
Fix an integer $g \ge 2$ and an integer $\ell \ge 3$. By \textit{level-$\ell$-structure} we mean symplectic level-$\ell$-structure.

\subsection{Universal families}\label{SubsectionUnivFamily}
It is natural to work with families to prove uniform bounds. In this subsection we introduce the various universal families which will be used.
\begin{enumerate}
\item[(i)] The universal curve $\mathfrak{C}_g \rightarrow \mathbb{M}_g$. Here $\mathbb{M}_g$ is the fine moduli space of smooth projective curves of genus $g$ with level-$\ell$-structure, and each fiber over $s \in \mathbb{M}_g(\C)$ is isomorphic to the curve parametrized by $s$. It is known that $\mathbb{M}_g$ is an irreducible regular quasi-projective variety of dimension $3g-3$. It is an irreducible variety defined over $\IQbar$. We refer to  \cite[(5.14)]{DM:irreducibility}, or \cite[Thm.1.8]{OortSteenbrink}.
\item[(ii)] The universal abelian variety $\pi \colon \mathfrak{A}_g \rightarrow \mathbb{A}_g$. Here $\mathbb{A}_g$ is the fine moduli space of principally polarized abelian varieties of dimension $g$ with level-$\ell$-structure, and each fiber over $s \in \mathbb{A}_g(\C)$ is isomorphic to the abelian variety parametrized by $s$. It is known that
$\mathbb{A}_g$ is an irreducible regular quasi-projective variety of dimension $g(g+1)/2$. It is an irreducible variety defined over $\IQbar$. We refer to \cite[Thm.7.9 and below]{MFK:GIT94} or \cite[Thm.1.9]{OortSteenbrink}.
\end{enumerate}


The two universal families can be related in the following way.  Let $\mathrm{Jac}(\mathfrak{C}_g/\mathbb{M}_g)$ be the relative Jacobian of $\mathfrak{C}_g \rightarrow \mathbb{M}_g$. It is an abelian scheme
equipped with a natural principal polarization and with level-$\ell$-structure; see
\cite[Prop.6.9]{MFK:GIT94}. Attaching the Jacobian to a smooth curve induces the Torelli morphism
$\tau\colon \mathbb{M}_g \rightarrow \mathbb{A}_g$. The famous Torelli
theorem states that, absent level structure, the Torelli morphism is
injective on $\C$-points.
In our setting,  $\tau$ is a quasi-finite morphism \textit{cf.} \cite[Lem.1.11]{OortSteenbrink}. 
As $\mathbb{A}_g$ is a fine moduli space we have the following Cartesian diagram
\begin{equation}\label{EqUnivJac}
\xymatrix{
\mathrm{Jac}(\mathfrak{C}_g/\mathbb{M}_g) \ar[r] \ar[d]  \pullbackcorner & \mathfrak{A}_g \ar[d]^{\pi} \\
\mathbb{M}_g \ar[r]_{\tau} & \mathbb{A}_g
}
\end{equation}

\subsection{The Faltings--Zhang map}\label{SubsectionFaltingsZhang}
The \textit{New Gap Principle} from $\mathsection$\ref{SubsectionNGPIntro} concerns the differences of the points on each curve $C$ taken in its Jacobian $\mathrm{Jac}(C)$. This operation can be made precise by setting the subvariety $C-C$ of $\mathrm{Jac}(C)$ to be the image of $C \times C \rightarrow \mathrm{Jac}(C) = \mathrm{Pic}^0(C), (P,Q) \mapsto [Q-P]$. By abuse of notation we denote by $(P,Q) \mapsto Q-P$.\footnote{Notice that for any Abel--Jacobi embedding $j \colon C \rightarrow \mathrm{Jac}(C)$, we have $C-C = j(C) - j(C)$, where the difference on the right hand side is taken as the group operation on the abelian variety. This is because doing the difference cancels out the base point of the Abel--Jacobi embedding.}

We need to realize this difference in families. Let $\mathrm{Pic}(\mathfrak{C}_g/\mathbb{M}_g)$ be the relative Picard scheme; it is a group scheme over $\mathbb{M}_g$ and can be decomposed as the union of open and closed subschemes $\mathrm{Pic}^p(\mathfrak{C}_g/\mathbb{M}_g)$ for all $p \in \Z$, where $p$ indicates the degree of a line bundle. The difference group law $\mathrm{Pic}(\mathfrak{C}_g/\mathbb{M}_g) \times_{\mathbb{M}_g} \mathrm{Pic}(\mathfrak{C}_g/\mathbb{M}_g) \rightarrow \mathrm{Pic}(\mathfrak{C}_g/\mathbb{M}_g)$, when restricted to $\mathrm{Pic}^1(\mathfrak{C}_g/\mathbb{M}_g) \times_{\mathbb{M}_g} \mathrm{Pic}^1(\mathfrak{C}_g/\mathbb{M}_g)$, induces an $\mathbb{M}_g$-morphism
\[
\mathrm{Pic}^1(\mathfrak{C}_g/\mathbb{M}_g) \times_{\mathbb{M}_g} \mathrm{Pic}^1(\mathfrak{C}_g/\mathbb{M}_g) \rightarrow \mathrm{Pic}^0(\mathfrak{C}_g/\mathbb{M}_g) = \mathrm{Jac}(\mathfrak{C}_g/\mathbb{M}_g).
\]
 From \cite[proof of Prop.6.9]{MFK:GIT94} we get a $\mathbb{M}_g$-morphism $\mathfrak{C}_g \rightarrow \mathrm{Pic}^1(\mathfrak{C}_g/\mathbb{M}_g)$. Thus the $\mathbb{M}_g$-morphism above induces an $\mathbb{M}_g$-morphism
 \begin{equation}\label{EqFaltingZhang1}
 \mathscr{D}_1 \colon \mathfrak{C}_g \times_{\mathbb{M}_g} \mathfrak{C}_g \rightarrow \mathrm{Jac}(\mathfrak{C}_g/\mathbb{M}_g).
 \end{equation}
The restriction of $\mathscr{D}_1$ to each fiber is precisely $(P,Q) \mapsto Q-P$. We thus denote by $\mathfrak{C}_g - \mathfrak{C}_g$ the image of $\mathscr{D}_1$.

This construction can be generalized to more factors. Let $M \ge 1$ be an integer. Let $\mathfrak{C}_g^{[M]}$ and $\mathrm{Jac}(\mathfrak{C}_g/\mathbb{M}_g)^{[M]}$ denote the respective $M$-th fibered powers over $\mathbb{M}_g$. 
 Then we get an $\mathbb{M}_g$-morphism
 \begin{equation}\label{EqFaltingZhang}
 \mathscr{D}_M \colon \mathfrak{C}_g^{[M+1]} \rightarrow \mathrm{Jac}(\mathfrak{C}_g/\mathbb{M}_g)^{[M]} ,
 \end{equation}
such that over each fiber it is $(P_0,P_1,\ldots,P_M) \mapsto (P_1-P_0, \ldots, P_M-P_0)$.


\subsection{Height functions}\label{SubsectionHtUnivFamily}
To give a precise statement of the New Gap Principle, we need to fix the height functions. All line bundles below are assumed to be defined over $\IQbar$.

Let $\mathfrak{L}$ be a line bundle on $\mathrm{Jac}(\mathfrak{C}_g/\mathbb{M}_g)$ ample over $\mathbb{M}_g$ such that $[-1]^*\mathfrak{L} \cong \mathfrak{L}$; see \cite[Thm.XI~1.4]{LNM119}. This defines a fiberwise N\'{e}ron--Tate height \eqref{EqFiberwiseNTHeight}
\begin{equation}
\hat{h}_{\mathfrak{L}} \colon \mathrm{Jac}(\mathfrak{C}_g/\mathbb{M}_g)(\IQbar) \rightarrow [0,\infty).
\end{equation}
We also fix an ample line bundle $\mathfrak{M}$ on $\overline{\mathbb{M}_g}$, where $\overline{\mathbb{M}_g}$ is a compactification of $\mathbb{M}_g$. The Height Machine \eqref{EqHtMachine} provides an equivalence class of height function of which we fix a representative
\begin{equation}
h_{\overline{\mathbb{M}_g},\mathfrak{M}} \colon \overline{\mathbb{M}_g}(\IQbar) \rightarrow \R.
\end{equation}

\subsection{The New Gap Principle}
We are now ready to give the precise statement of the new Gap Principle. By definition of the moduli space and the universal curve, each smooth curve $C$ of genus $g \ge 2$ defined over $\IQbar$ is isomorphic to $\mathfrak{C}_s$, the fiber of $\mathfrak{C}_g \rightarrow \mathbb{M}_g$ over $s$, for some $s \in \mathbb{M}_g(\IQbar)$. Use the height functions from $\mathsection$\ref{SubsectionHtUnivFamily}.
\begin{thm}[Dimitrov--Gao--Habegger + K\"{u}hne]\label{ThmNGP}
There exist positive constants $c_1, c_2$ depending only on $g$ (apart from $\mathfrak{L}$ and $\mathfrak{M}$) with the following property. For each $s \in \mathbb{M}_g(\IQbar)$ and each $P \in \mathfrak{C}_s(\IQbar)$, 
we have 
\begin{equation}\label{EqNGPThm}
\#\left\{Q \in \mathfrak{C}_s(\IQbar) : \hat{h}_{\mathfrak{L}}(Q-P) \le c_1\max\{1,h_{\overline{\mathbb{M}_g},\mathfrak{M}}(s)\} \right\} < c_2.
\end{equation}
\end{thm}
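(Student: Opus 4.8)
The plan is to combine two ingredients that will be established in the body of the survey: the height inequality on non-degenerate subvarieties of an abelian scheme (ingredient (ii), coming from \cite{DGHUnifML}) and the uniform Bogomolov-type bound \cite[Thm.3]{KuehneUnifMM} (ingredient (iii)), exactly along the lines sketched in $\mathsection$\ref{SubsectionNGPIntro}. First I would set up the geometry: apply the Faltings--Zhang construction $\mathscr{D}_1$ from \eqref{EqFaltingZhang1} to the universal curve, and pull back to the second fibered power $\mathfrak{C}_g^{[2]}$ so that the subvariety $X = \mathfrak{C}_g - \mathfrak{C}_g \subset \mathrm{Jac}(\mathfrak{C}_g/\mathbb{M}_g)$ (or rather its fibered square relative to $\mathbb{M}_g$, to handle the two points $P,Q$ simultaneously) becomes the relevant object; the height $\hat{h}_{\mathfrak{L}}(Q-P)$ is then the fiberwise Néron--Tate height of a point of $X$. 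Using the geometric criterion for non-degeneracy (ingredient (i), \cite{GaoBettiRank}), one shows that after an appropriate construction $X$ can be arranged to be non-degenerate in the relevant abelian scheme over $\mathbb{M}_g$; this is where one invokes that $g \ge 2$ so that the Torelli image has positive-dimensional Betti image on the curve difference.

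Next I would split the count in \eqref{EqNGPThm} according to the size of $\hat{h}_{\mathfrak{L}}(Q-P)$ relative to a fixed threshold $c_3 = c_3(g)$, mirroring the large/small dichotomy of $\mathsection$\ref{SubsectionVojtaIntro}. For the ``large'' range $c_3 < \hat{h}_{\mathfrak{L}}(Q-P) \le c_1\max\{1,h_{\overline{\mathbb{M}_g},\mathfrak{M}}(s)\}$, I would apply the height inequality of $\mathsection$\ref{SectionHtIneqEqdistr} to the non-degenerate $X$: it gives, outside a proper closed subset, a bound of the shape $h_{\overline{\mathbb{M}_g},\mathfrak{M}}(s) \le c\,\hat{h}_{\mathfrak{L}}(Q-P) + c'$ (roughly), which forces $\hat{h}_{\mathfrak{L}}(Q-P)$ to be bounded below by a \emph{positive multiple} of $\max\{1,h_{\overline{\mathbb{M}_g},\mathfrak{M}}(s)\}$ for all but finitely many $Q$ lying off that closed subset; combined with the relative Mumford/Vojta machinery of Theorem~\ref{ThmLargePointsFamily}, or directly with the counting in \cite[Prop.7.1]{DGHUnifML}, this bounds the number of such $Q$ by a constant depending only on $g$ once $c_1$ is chosen small enough relative to the constant in the height inequality. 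Points lying on the exceptional closed subset have to be dealt with separately by Noetherian induction / a stratification argument, giving only a further bounded contribution. For the ``small'' range $\hat{h}_{\mathfrak{L}}(Q-P) \le c_3$, the bound $\#\{Q : \hat{h}_{\mathfrak{L}}(Q-P) \le c_3\} < c_2'$ is precisely (the content of) \cite[Thm.3]{KuehneUnifMM} applied to the fiber $C = \mathfrak{C}_s$ with Abel--Jacobi base point $P$, possibly after adjusting $c_3$ and $c_2'$; this is exactly the step that removes the constant term $c_3$ that \cite{DGHUnifML} alone could not eliminate.

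Adding the two bounds and absorbing constants yields \eqref{EqNGPThm} with $c_2$ depending only on $g$ (and the fixed $\mathfrak{L},\mathfrak{M}$), and $c_1$ the smaller of the threshold produced by the height inequality and $1$. \textbf{The main obstacle} I anticipate is the bookkeeping around non-degeneracy: one must verify that the curve-difference subvariety $\mathfrak{C}_g-\mathfrak{C}_g$ (and the auxiliary fibered powers needed to handle $P$ and $Q$ at once) genuinely satisfies the non-degeneracy hypothesis so that the height inequality applies, and then carefully control the exceptional loci where it fails — these loci, pulled back through the Faltings--Zhang map and through the Torelli morphism, must be shown to contribute only a $g$-bounded number of points $Q$ for each $P$, which requires a uniform version of the argument over the whole moduli stratification rather than a pointwise one. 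The delicate point is that $c_1, c_2$ must be genuinely independent of $s$; this uniformity is what makes the interplay of the height inequality (which is over the fixed base $\mathbb{M}_g$) with Kühne's uniform bound essential, and it is the heart of why the two results must be combined rather than used in isolation.
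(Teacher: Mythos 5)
Your high-level strategy (split into a ``large'' range handled by the Dimitrov--Gao--Habegger height inequality and a ``small'' range handled by K\"{u}hne's uniform Bogomolov bound, then absorb constants) is indeed the paper's strategy, and your description of the combination step is essentially Proposition~\ref{PropNGPprep}. But there is a genuine gap in your geometric setup. You propose to apply the height inequality to $X=\mathfrak{C}_g-\mathfrak{C}_g\subset\mathrm{Jac}(\mathfrak{C}_g/\mathbb{M}_g)$, or to its fibered square over $\mathbb{M}_g$, asserting that ``after an appropriate construction $X$ can be arranged to be non-degenerate in the relevant abelian scheme over $\mathbb{M}_g$.'' This subvariety is \emph{degenerate}: as noted right after \eqref{EqNonDegNaive}, $\dim(\mathfrak{C}_g-\mathfrak{C}_g)=3g-1>g$, and likewise its fibered square over $\mathbb{M}_g$ has dimension $3g+1>2g$, so the trivial bound $\mathrm{rank}_{\mathrm{Betti}}\le 2g$ (resp. $4g$) already rules out non-degeneracy for every $g\ge 2$. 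The height inequality, Theorem~\ref{ThmHtIneq}, is vacuous on a degenerate subvariety ($X^*=\emptyset$), so your ``large range'' step collapses. The missing idea is a change of base: one must work over the universal curve $\mathfrak{C}_g$ itself, viewed as the parameter space of Abel--Jacobi embeddings ($\mathsection$\ref{SubsectionParaSpAJ}), so that the point $P$ becomes part of the base and the fibers are the one-dimensional curves $\mathfrak{C}_{\pi(P)}-P$ inside $\mathfrak{J}_{\mathfrak{C}_g}=\mathrm{Jac}(\mathfrak{C}_g/\mathbb{M}_g)\times_{\mathbb{M}_g}\mathfrak{C}_g$; one then takes the $M$-th fibered power of this family with $M\ge\dim\mathfrak{C}_g=3g-2$ and invokes Theorem~\ref{ThmNonDegXA} to obtain non-degeneracy. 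Without the high fibered powers, no amount of Betti-rank counting on the curve difference itself will produce a non-degenerate subvariety.

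A second, smaller issue: once you apply the height inequality to the $M$-th fibered power, the conclusion concerns $M$-tuples $(x_1,\dots,x_M)$ outside the degeneracy locus, not individual points. Converting this into a cardinality bound on a single fiber $\mathfrak{C}_P$ requires the combinatorial Lemma~\ref{LemmaNogaAlon} (if $\#\Sigma$ is large then $\Sigma^M$ escapes any fixed proper closed subset of $\mathfrak{C}_P^M$ of bounded degree), together with a Noetherian induction on the base to handle those fibers entirely contained in the degeneracy locus. Your ``stratification argument giving a further bounded contribution'' gestures at this but does not supply the mechanism; and Theorem~\ref{ThmLargePointsFamily} (relative Mumford--Vojta) plays no role in the New Gap Principle itself --- it enters only in the final packing argument of $\mathsection$\ref{SubsectionProofUML}.
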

In the statement \eqref{EqNGPThm}, the height $h_{\overline{\mathbb{M}_g},\mathfrak{M}}(s)$ can be replaced by  any modular height of $[\mathrm{Jac}(C)] \in \mathbb{A}_{g,1}(\IQbar)$; see \cite[proof~of~Thm.1.2 and above]{DGHUnifML}. Here $\mathbb{A}_{g,1}$ is the coarse moduli space of principally polarized abelian varieties of dimension $g$. The key point is that the Torelli map $\tau \colon \mathbb{M}_g \rightarrow \mathbb{A}_g$ is quasi-finite and the triangular inequality Theorem~\ref{ThmHtTriangularIneq}.(i). By a fundamental work of Faltings \cite[$\mathsection$3 including the proof of Lemma~3]{faltings1983endlichkeitssatze} to compare the modular height with the \textit{Faltings height} of any given abelian variety, 
$h_{\overline{\mathbb{M}_g},\mathfrak{M}}(s)$ can furthermore be replaced by 
the Faltings height $h_{\mathrm{Fal}}(\mathrm{Jac}(C))$.

The proof of Theorem~\ref{ThmNGP} is a combination of \cite[Prop.7.1 (and its proof)]{DGHUnifML} and \cite[Thm.3]{KuehneUnifMM}. Roughly speaking, the former result handles curves of large height, and the latter result handles curves of small height. 
 More precisely, an adjustment of the proof of \cite[Prop.7.1]{DGHUnifML} proves \eqref{EqNGPThm} with $c_1\max\{1,h_{\overline{\mathbb{M}_g},\mathfrak{M}}(s)\}$ replaced by $c_1\max\{1,h_{\overline{\mathbb{M}_g},\mathfrak{M}}(s)\} - c_3$, and hence what remains to be done is to remove the constant term $c_3$. Then \cite[Thm.3]{KuehneUnifMM} proves \eqref{EqNGPThm} with $c_1\max\{1,h_{\overline{\mathbb{M}_g},\mathfrak{M}}(s)\}$ replaced by some $c_3' > 0$, which is exactly what is needed to remove the $c_3$.


\small

\subsection{Polarization type}
Let $d_1|\cdots |d_g$ be positive integers, and set $D: = \mathrm{diag}(d_1,\ldots,d_g)$.

In this subsection, we introduce a new moduli space and the universal family, \textit{cf.} \cite[$\mathsection$1.2 and 1.3]{GenestierNgo}.

Let $\A_{g,\ell,D}$ be the moduli space of abelian varieties polarized of type $D$ (so of dimension $g$) with level-$\ell$-structure. If $\ell \ge 3$, then $\A_{g,\ell,D}$ is a fine moduli space, and hence admits a universal family $\mathfrak{A}_{g,\ell,D} \rightarrow \A_{g,\ell,D}$.

The universal covering in the category of complex spaces for $\A_{g,\ell,D}$ is given by $\mathfrak{H}_g \rightarrow \A_{g,\ell,D}^{\mathrm{an}}$, where $\mathfrak{H}_g = \{Z \in \mathrm{Mat}_{g\times g}(\C): Z = Z^{\!^{\intercal}} , ~ \mathrm{Im}(Z) > 0\}$ is the Siegel upper half space. Let $\mathrm{Sp}_{2g,D}$ be the $\Q$-group defined by
\begin{equation}\label{EqGroupSpD}
\mathrm{Sp}_{2g,D}(\Q) = \left\{g \in \mathrm{SL}_{2g}(\Q) : g \begin{bmatrix}	 0 & D \\ -D & 0	\end{bmatrix} g^{\!^{\intercal}} = \begin{bmatrix}	 0 & D \\ -D & 0	\end{bmatrix}\right\}.
\end{equation}
Then $\mathrm{Sp}_{2g,D}(\R)$ acts transitively on $\mathfrak{H}_g$ as described in \cite[$\mathsection$1.2]{GenestierNgo}, and the uniformization above induces $\A_{g,\ell,D}^{\mathrm{an}} \cong \mathrm{Sp}_{2g,D}(1+\ell\Z) \backslash \mathfrak{H}_g$ with $\mathrm{Sp}_{2g,D}(1+\ell\Z) = \ker(\mathrm{Sp}_{2g,D}(\Z) \rightarrow \mathrm{Sp}_{2g,D}(\Z/\ell\Z))$.

In the context, we often abbreviate $\mathbb{A}_{g,\ell,D}$ by $\mathbb{A}_{g,D}$, and $\mathfrak{A}_{g,\ell,D}$ by $\mathfrak{A}_{g,D}$.

Now let $S$ be an irreducible variety over $\C$ and $\pi \colon \cA \rightarrow S$ be an abelian scheme of relative dimension $g \ge 1$. By \cite[$\mathsection$2.1]{GenestierNgo}, $\cA \rightarrow S$ is polarizable of type $D$ for some diagonal matrix $D$ as above. Then up to taking a finite cover of $S$ and taking the appropriate base change of $\cA \rightarrow S$, there exists a Cartesian diagram
\begin{equation}\label{EqModularMap}
\xymatrix{
\cA \ar[r]^-{\iota} \ar[d]_{\pi} \pullbackcorner & \mathfrak{A}_{g,D} \ar[d] \\
S \ar[r]^-{\iota_S} & \mathbb{A}_{g,D}.}
\end{equation}
The morphism $\iota$ is called the \textit{modular map}.

\normalsize

\section{Betti map and Betti form}\label{SectionBetti}
This section introduces two fundamental tools in the course of proving Theorem~\ref{ConjMazur}, the \textit{Betti map} and the \textit{Betti form}.

In this section, let $S$ be an irreducible variety over $\C$ and $\pi \colon \cA \rightarrow S$ be an abelian scheme of relative dimension $g \ge 1$. By \cite[$\mathsection$2.1]{GenestierNgo}, there exist positive integers $d_1|\cdots |d_g$ such that $\cA \rightarrow S$ is polarizable of type $D: = \mathrm{diag}(d_1,\ldots,d_g)$.

\subsection{Betti map}
The Betti map is a useful tool in Diophantine Geometry. It was already used in early works of Corvaja, Masser and Zannier on the Relative Manin--Mumford Conjecture; see $\mathsection$\ref{SubsectionRelBog}. The name ``Betti map'' was proposed by Bertrand.

\medskip

The idea to define the Betti map is simple: one identifies each closed fiber $\cA_s$ with the real torus $\mathbb{T}^{2g}$  under the period matrices. Here is a brief construction. For any $s \in S(\C)$, there exists an open neighborhood $\Delta
\subseteq S^{\mathrm{an}}$ of $s$ which we may assume is simply-connected.  Then one can define the \textit{Betti map} 
\begin{equation}\label{EqBettiMap}
b_{\Delta} \colon \cA_{\Delta} = \pi^{-1}(\Delta) \rightarrow \mathbb{T}^{2g},
\end{equation}
as follows.  As $\Delta$ is simply-connected, one defines a basis $\omega_1(s),\ldots,\omega_{2g}(s)$ of the period lattice of each fiber $s \in \Delta$ as holomorphic functions of $s$. Now each fiber $\cA_s = \pi_S^{-1}(s)$ can be identified with the complex torus $\C^g/\Z \omega_1(s)\oplus \cdots \oplus \Z\omega_{2g}(s)$, and each point $x \in \cA_s(\C)$ can be expressed as the class of $\sum_{i=1}^{2g}b_i(x) \omega_i(s)$ for real numbers $b_1(x),\ldots,b_{2g}(x)$. Then $b_{\Delta}(x)$ is defined to be the class of the $2g$-tuple $(b_1(x),\ldots,b_{2g}(x)) \in \R^{2g}$ modulo $\Z^{2g}$. We thus obtain \eqref{EqBettiMap}. The map $b_{\Delta}$ is not unique, but it is unique up to $\mathrm{GL}_{2g}(\Z) \cong \mathrm{Aut}(\mathbb{T}^{2g})$. In fact, later on we will see that $b_{\Delta}$ is in fact unique up to $\mathrm{Sp}_{2g,D}(\Z)$ with the group $\mathrm{Sp}_{2g,D}$ defined in \eqref{EqGroupSpD} if the basis is well-chosen.

\medskip

The following \textit{Betti rank} is of particular importance; see \cite{ACZBetti}.
\begin{defn}
Let $X$ be an irreducible subvariety of $\cA$ and let $x \in X^{\mathrm{sm}}(\C)$. The Betti rank of $X$ at $x$ is defined to be
\begin{equation}\label{EqBettiRank}
\mathrm{rank}_{\mathrm{Betti}}(X,x) := \mathrm{rank}_{\R}(\mathrm{d}b_{\Delta}|_{X^{\mathrm{sm,an}}})_x
\end{equation}
where $\Delta$ is an open neighborhood of $\pi(x)$ in $S^{\mathrm{an}}$ and $b_{\Delta}$ is the Betti map.
\end{defn}
The right hand side of \eqref{EqBettiRank} does not depend on the choice of $\Delta$ or $b_{\Delta}$.

\medskip

More concrete constructions of the Betti map can be found in \cite{ACZBetti} via $1$-motives, in \cite{CGHX:18} by means of Arithmetic Dynamics, and in \cite{GaoBettiRank} using the universal abelian varieties. An ad hoc construction when $\dim S = 1$ can be found in \cite{GaoHab}. In the course of the constructions, the following proposition can be proved.
\begin{prop}\label{PropBettiMap}
The Betti map $b_{\Delta}$ satisfies the following properties.
\begin{enumerate}
\item[(i)] For each $t \in \mathbb{T}^{2g}$, we have that $b_{\Delta}^{-1}(t)$ is complex analytic.
\item[(ii)] For each $s \in \Delta$, the restriction $b_{\Delta}|_{\cA_s}$ is a group isomorphism.
\item[(iii)] The map $(b_{\Delta},\pi) \colon \cA_{\Delta} \rightarrow \mathbb{T}^{2g} \times \Delta$ is a real analytic isomorphism.
\end{enumerate}
\end{prop}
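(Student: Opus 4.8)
The plan is to produce the real-analytic inverse of $(b_\Delta,\pi)$ explicitly and then read off all three statements from it. Write $V = \lie(\cA_\Delta/\Delta)$ for the relative Lie algebra, a holomorphic vector bundle of rank $g$ on $\Delta$; since $\Delta$ is simply-connected it is holomorphically trivial, $V \cong \C^g\times\Delta$. Let $\exp\colon V\to\cA_\Delta$ be the relative exponential: a holomorphic surjective homomorphism of group schemes over $\Delta$ whose restriction to each fiber $V_s=\C^g$ is the universal covering of $\cA_s$ with kernel the period lattice $\Lambda_s$. The $\Lambda_s$ assemble into a local system $\Lambda\subseteq V$ of rank $2g$, which over the simply-connected $\Delta$ is again trivial; the holomorphic frame $\omega_1,\dots,\omega_{2g}\colon\Delta\to V$ chosen in the construction of $b_\Delta$ is precisely a trivializing frame, with $\omega_1(s),\dots,\omega_{2g}(s)$ a $\Z$-basis of $\Lambda_s$, hence an $\R$-basis of $\C^g$.

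Next I define $\psi\colon \mathbb{T}^{2g}\times\Delta\to\cA_\Delta$ by $\psi\bigl((a_1,\dots,a_{2g})\bmod\Z^{2g},\,s\bigr)=\exp_s\bigl(\sum_{i=1}^{2g}a_i\omega_i(s)\bigr)$. It is well-defined (integer shifts of the $a_i$ move the argument of $\exp_s$ by $\Lambda_s$) and real-analytic, being the composition of the map $((a_i),s)\mapsto\sum a_i\omega_i(s)$, which is $\R$-linear in the $a_i$ and holomorphic in $s$, with the holomorphic map $\exp$. By construction $\psi$ is the two-sided inverse of $(b_\Delta,\pi)$: writing $x=\exp_s(\sum b_i(x)\omega_i(s))$ for $x\in\cA_s$ one has $(b_\Delta,\pi)(x)=((b_i(x))\bmod\Z^{2g},s)$ and $\psi$ sends this back to $x$.

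Now I verify that $\psi$ is a real-analytic isomorphism, which yields (iii). Since $\pi\circ\psi=\mathrm{pr}_\Delta$, the map $\psi$ is fibered over $\Delta$; on the fiber over $s$ it is the map $\mathbb{T}^{2g}\to\cA_s$ induced by the $\R$-linear isomorphism $\R^{2g}\to\C^g$, $e_i\mapsto\omega_i(s)$, which carries $\Z^{2g}$ isomorphically onto $\Lambda_s$ and therefore descends to a group isomorphism of real tori $\R^{2g}/\Z^{2g}\xrightarrow{\ \sim\ }\C^g/\Lambda_s=\cA_s$. This already gives (ii), since $b_\Delta|_{\cA_s}$ is the inverse of this isomorphism. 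In particular $\psi$ is bijective and its fiberwise differential is invertible; together with $\pi\circ\psi=\mathrm{pr}_\Delta$ this forces $d\psi$ to be an isomorphism at every point, so $\psi$ is a local real-analytic diffeomorphism, hence a real-analytic isomorphism with real-analytic inverse $(b_\Delta,\pi)$. Finally, for (i): for fixed $t=(a_i)\bmod\Z^{2g}$, the set $b_\Delta^{-1}(t)=\psi(\{t\}\times\Delta)$ is the image of the section $\Delta\to\cA_\Delta$, $s\mapsto\exp_s(\sum a_i\omega_i(s))$, which is holomorphic because $\sum a_i\omega_i$ is a holomorphic section of $V$ and $\exp$ is holomorphic; thus $b_\Delta^{-1}(t)$ is a complex submanifold of $\cA_\Delta$, in particular complex analytic.

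The point demanding care is the real-analyticity of $\psi$, equivalently of $b_\Delta$, and the invertibility of $d\psi$: a priori the coordinate functions $b_i(x)$ entangle the holomorphic dependence of the frame on $s$ with the merely $\R$-linear way $x$ is expressed in that frame, and the clean conclusion emerges only after untwisting both $V$ and the period local system $\Lambda$ using that $\Delta$ is simply-connected; the inverse function theorem then does the rest. It is also worth recording that $\omega_1,\dots,\omega_{2g}$ is unique up to the constant action of $\mathrm{GL}_{2g}(\Z)$ on the trivial local system $\Lambda$, which is exactly why $b_\Delta$ — and the isomorphism in (iii) — is canonical up to $\mathrm{GL}_{2g}(\Z)\cong\aut(\mathbb{T}^{2g})$, matching the discussion following the construction of $b_\Delta$.
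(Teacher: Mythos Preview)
Your argument is correct and is the intrinsic form of what the paper does: the paper gives no self-contained proof of this proposition but states that the properties follow ``in the course of the constructions'' in \cite{ACZBetti,CGHX:18,GaoBettiRank,GaoHab}, and then sketches the universal case $\mathfrak{A}_{g,D}\to\mathbb{A}_{g,D}$ via the explicit change of real coordinates on $\C^g\times\mathfrak{H}_g$ (inverse of $(a,b,Z)\mapsto(Da+Zb,Z)$, then project), which is exactly your map $\psi$ written in the coordinate chart of the universal family. One harmless slip: simple-connectedness of $\Delta$ alone does not force the holomorphic vector bundle $V$ to be trivial, but you never actually use that claim --- only the (correct) triviality of the period local system $\Lambda$, which furnishes the holomorphic sections $\omega_1,\dots,\omega_{2g}$ of $V$ that your construction of $\psi$ relies on.
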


We hereby take the construction from \cite[$\mathsection$3 and 4]{GaoBettiRank}, and briefly sketch for the case $\mathfrak{A}_{g,D} \rightarrow \mathbb{A}_{g,D}$. The general case follows easily from it by composing with the modular map $\iota$ from \eqref{EqModularMap}.

The universal covering $\mathfrak{H}_g \rightarrow \mathbb{A}_{g,D}^{\mathrm{an}}$,
where $\mathfrak{H}_g = \{Z \in \mathrm{Mat}_{g\times g}(\C): Z = Z^{\!^{\intercal}} , ~ \mathrm{Im}(Z) > 0\}$ is the Siegel upper half space, gives a
polarized family of abelian varieties
$\cA_{\mathfrak{H}_g} \rightarrow \mathfrak{H}_g$ fitting into the diagram
\[
\xymatrix{
\cA_{\mathfrak{H}_g} := \mathfrak{A}_{g,D}^{\mathrm{an}} \times_{\mathbb{A}_{g,D}^{\mathrm{an}}}\mathfrak{H}_g \ar[r]^-{u_B} \ar[d] & \mathfrak{A}_{g,D}^{\mathrm{an}} \ar[d]^{\pi^{\mathrm{univ}}} \\
\mathfrak{H}_g \ar[r] & \mathbb{A}_{g,D}^{\mathrm{an}}.
}
\]
For the universal covering $u \colon \C^g \times \mathfrak{H}_g
\rightarrow \cA_{\mathfrak{H}_g}$ and for each $Z \in
\mathfrak{H}_g$, the kernel of $u|_{\mathbb{C}^g \times \{Z\}}$ is
$D\mathbb{Z}^g + Z \mathbb{Z}^g$. Thus the map $\C^g \times
\mathfrak{H}_g \rightarrow \R^g \times \R^g \times
\mathfrak{H}_g \rightarrow \R^{2g}$, where the first map is the
inverse of $(a,b,Z) \mapsto (Da + Z  b,Z)$ and the second map is the
natural projection, descends to a \textit{real} analytic map
\[
b^{\mathrm{univ}} \colon \cA_{\mathfrak{H}_g} \rightarrow \mathbb T^{2g}.
\]
 Now for each $s_0 \in \mathbb A_{g,D}(\C)$, there exists a contractible,
 relatively compact, open neighborhood $\Delta$ of $s_0$ in $\mathbb A_{g,D}^{\mathrm{an}}$ such that $\mathfrak{A}_{g,D,\Delta}:=(\pi^{\mathrm{univ}})^{-1}(\Delta)$ can be identified with $\cA_{\mathfrak{H}_g,\Delta'}$ for some open subset $\Delta'$ of $\mathfrak{H}_g$. The composite $b_{\Delta} \colon \mathfrak{A}_{g,D,\Delta} \cong \cA_{\mathfrak{H}_g,\Delta'} \rightarrow \mathbb T^{2g}$ is  real analytic and satisfies the three properties for the Betti map. Thus $b_{\Delta}$ is the desired Betti map in this case. Note that for a fixed (small enough) $\Delta$, there are infinitely choices of $\Delta'$; but for $\Delta$ small enough, if $\Delta_1'$ and $\Delta_2'$ are two such choices, then $\Delta_2' = \alpha \cdot \Delta_1'$ for some $\alpha \in \mathrm{Sp}_{2g,D}(\Z) \subset \mathrm{SL}_{2g}(\Z)$.

The last sentence of the previous paragraph implies the following property. Fix a Betti map $b_{\Delta} \colon \mathfrak{A}_{g,D,\Delta} \rightarrow \mathbb{T}^{2g}$, then any other Betti map $\mathfrak{A}_{g,D,\Delta} \rightarrow \mathbb{T}^{2g}$ is $\alpha \cdot b_{\Delta}$ for some $\alpha \in  \mathrm{Sp}_{2g,D}(\Z)$.
 
 \subsection{Betti form}\label{SubsectionBettiForm}
 The \textit{Betti form} is a closed semi-positive smooth $(1,1)$-form $\omega$ on $\cA^{\mathrm{an}}$ with the property $[N]^*\omega = N^2 \omega$ such that the following property holds true: For any subvariety $X$ of $\cA$ and any $x \in X^{\mathrm{sm}}(\C)$, we have
 \begin{equation}\label{EqBettiRankBettiForm}
 \mathrm{rank}_{\mathrm{Betti}}(X,x) = 2\dim X \Leftrightarrow (\omega|_X^{\wedge \dim X})_x \not= 0.
 \end{equation}

There are several ways to construct the Betti form $\omega$. In \cite[$\mathsection$2.2 and 2.3]{DGHUnifML} by using a formula given by Mok \cite[pp.374]{Mok11Form}, and in \cite[$\mathsection$2]{CGHX:18} by means of Arithmetic Dynamics.
 
 We hereby state a third construction via the Betti map. It is closely related to the construction in \cite{DGHUnifML}.
 \begin{constr}\label{ConstrBettiForm}
 Use $(a,b) = (a_1,b_1; \ldots ; a_g, b_g)$ to denote the coordinates of $\mathbb{T}^{2g}$. Let $\Delta$ be a simply-connected open subset of $S^{\mathrm{an}}$ and $b_{\Delta} \colon \cA_{\Delta} \rightarrow \mathbb{T}^{2g}$ be the Betti map from \eqref{EqBettiMap}. Define the $2$-form on $\cA_{\Delta}$
 \begin{equation}\label{EqBettiMapBettiFormFormula}
\omega_{\Delta} = b_{\Delta}^{-1}\left( 2(D\mathrm{d}a)^{\!^{\intercal}}  \wedge \mathrm{d}b \right) = b_{\Delta}^{-1}\left(2\sum_{j=1}^g d_j \mathrm{d}a_j \wedge \mathrm{d}b_j \right).
\end{equation}
Observe that $\omega_{\Delta}$ is well-defined, because two different choices of $b_{\Delta}$ differ from an element in $\mathrm{Sp}_{2g,D}(\Z)$ (see above $\mathsection$\ref{SubsectionBettiForm}) and $2(D\mathrm{d}a)^{\!^{\intercal}}  \wedge \mathrm{d}b$ is preserved by $\mathrm{Sp}_{2g,D}(\R)$.

Moreover, it is not hard to check that these $\omega_{\Delta}$ glue together to a $2$-form $\omega$ on $\cA^{\mathrm{an}}$. 

This $\omega$ is the desired Betti form.
 \end{constr}
 
For the $\omega$ constructed above, the facts that $\omega$ is smooth and  $[N]^*\omega = N^2 \omega$ are not hard to check. To check that $\omega$ is a $(1,1)$-form and is semi-positive, one can do an explicit computation by the change of coordinates $(b_{\Delta},\pi) \colon \cA_{\Delta} \rightarrow \mathbb{T}^{2g} \times \Delta$ from Proposition~\ref{PropBettiMap}.(iii). In fact, by a similar computation executed in \cite[$\mathsection$2.2]{DGHUnifML}, one can prove the following statement. For the uniformization $\mathbf{u} \colon \C^g \times \mathfrak{H}_g  \rightarrow \mathfrak{A}_{g,D}^{\mathrm{an}}$ and the Betti form $\omega$ on $\mathfrak{A}_{g,D}^{\mathrm{an}}$, we have
\begin{equation}
\mathbf{u}^*\omega = \sqrt{-1} \partial\bar{\partial}\left( 2 (\mathrm{Im}w)^{\!^{\intercal}} (\mathrm{Im}Z)^{-1} (\mathrm{Im}w) \right)
\end{equation}
where we use $(w,Z)$ to denote the coordinates on $\C^g \times \mathfrak{H}_g$. The symmetric real matrix representing $\mathbf{u}^*\omega$ is
\begin{equation}
\begin{bmatrix}
1 &  - (\mathrm{Im}w)^{\!^{\intercal}} (\mathrm{Im}Z)^{-1} \\
- (\mathrm{Im}Z)^{-1} (\mathrm{Im}w) &  (\mathrm{Im}Z)^{-1} (\mathrm{Im}w) (\mathrm{Im}w)^{\!^{\intercal}} (\mathrm{Im}Z)^{-1}
\end{bmatrix} \otimes (\mathrm{Im}Z)^{-1}.
\end{equation}
Now \eqref{EqBettiRankBettiForm} a consequence of \eqref{EqBettiMapBettiFormFormula}; see \cite[Lem.10]{KuehneUnifMM}.

\begin{rmk}\label{RmkBettiFormFiberProduct}
For each integer $M \ge 1$, set $\cA^{[M]} = \cA\times_S \ldots \times_S \cA$ ($M$-copies). Then  $p_1^*\omega + \cdots + p_M^*\omega$ is a choice of the Betti form on $(\cA^{[M]})^{\mathrm{an}}$, with each $p_i\colon \cA^{[M]} \rightarrow \cA$ the projection to the $i$-th factor.
\end{rmk}

We close this section by pointing out a more geometric property of the Betti form, which is a geometric motivation behind \cite{DGHUnifML} and \cite{KuehneUnifMM}.
 
 Assume $\ell \ge 3$ is even. There exists a tautological relatively ample line bundle $\mathfrak{L}_{g,D}$ on $\mathfrak{A}_{g,D} / \mathbb{A}_{g,D}$, namely for each $s \in \mathbb{A}_{g,D}(\C)$, $((\mathfrak{A}_{g,D})_s, (\mathfrak{L}_{g,D})_s)$ is the polarized abelian variety parametrized by $s$. Moreover $[-1]^*\mathfrak{L}_{g,D} = \mathfrak{L}_{g,D}$. We refer to \cite[Prop.10.8 and 10.9]{PinkThesis}.

\begin{prop}\label{PropBettiFormChernClass}
The cohomology class of the Betti form $\omega$ on $\mathfrak{A}_{g,D}^{\mathrm{an}}$ coincides with the first Chern class $c_1(\mathfrak{L}_{g,D})$ of $\mathfrak{L}_{g,D}$.
\end{prop}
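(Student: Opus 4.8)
The plan is to promote the asserted equality of cohomology classes to an equality of differential forms, by exhibiting $\omega$ as the first Chern form of the canonical (cubical) metric on $\mathfrak{L}_{g,D}$. First I would recall, following \cite[Prop.~10.8 and 10.9]{PinkThesis}, the description of $\mathfrak{L}_{g,D}$ on the uniformization $\mathbf{u}\colon \C^g\times\mathfrak{H}_g\to\mathfrak{A}_{g,D}^{\mathrm{an}}$: since $\C^g\times\mathfrak{H}_g$ is Stein and contractible, $\mathbf{u}^*\mathfrak{L}_{g,D}$ is holomorphically trivial, and $\mathfrak{L}_{g,D}$ is given by a theta-type factor of automorphy for $\mathbb{Z}^{2g}\rtimes\mathrm{Sp}_{2g,D}(1+\ell\Z)$, where $(m,n)\in\mathbb{Z}^{2g}$ acts by $w\mapsto w+Dm+Zn$ (recall $\ker(\mathbf{u}|_{\C^g\times\{Z\}}) = D\mathbb{Z}^g + Z\mathbb{Z}^g$) through the canonical automorphy factor attached to the Hermitian form $H_Z(w,w') = {}^{t}\overline{w}\,(\mathrm{Im}Z)^{-1}w'$; a direct check shows that $\mathrm{Im}\,H_Z$ restricted to the lattice $D\mathbb{Z}^g\oplus Z\mathbb{Z}^g$ is the standard symplectic form of type $D$, i.e. $H_Z$ is a polarization of type $D$, which matches the tautological polarization carried by $\mathfrak{L}_{g,D}$.

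Next I would equip $\mathfrak{L}_{g,D}$ with the metric for which a global generating section $s_0$ of $\mathbf{u}^*\mathfrak{L}_{g,D}$ satisfies $-\log\|s_0\|^2 = 2(\mathrm{Im}w)^{\intercal}(\mathrm{Im}Z)^{-1}(\mathrm{Im}w)$, the precise constant being forced by the polarization type $D$ and by the factor $2$ in Construction~\ref{ConstrBettiForm}, equation \eqref{EqBettiMapBettiFormFormula}. Its first Chern form pulls back under $\mathbf{u}$ to $-\tfrac{\sqrt{-1}}{2\pi}\partial\overline{\partial}\log\|s_0\|^2$, which after the standard normalization is exactly $\sqrt{-1}\,\partial\overline{\partial}\bigl(2(\mathrm{Im}w)^{\intercal}(\mathrm{Im}Z)^{-1}(\mathrm{Im}w)\bigr) = \mathbf{u}^*\omega$ by the computation recalled above (the one carried out in \cite{DGHUnifML}). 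Since $\mathbf{u}$ is a surjective local biholomorphism, this yields $c_1(\mathfrak{L}_{g,D},\|\cdot\|) = \omega$ as forms on $\mathfrak{A}_{g,D}^{\mathrm{an}}$, hence in particular the de Rham classes agree.

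A more structural alternative avoids metrics altogether. As $\pi^{\mathrm{univ}}\colon\mathfrak{A}_{g,D}^{\mathrm{an}}\to\mathbb{A}_{g,D}^{\mathrm{an}}$ is smooth projective, Deligne's degeneration of the Leray spectral sequence gives $H^2(\mathfrak{A}_{g,D}^{\mathrm{an}},\Q) = H^0(\mathbb{A}_{g,D},R^2\pi^{\mathrm{univ}}_*\Q)\oplus H^1(\mathbb{A}_{g,D},R^1\pi^{\mathrm{univ}}_*\Q)\oplus (\pi^{\mathrm{univ}})^*H^2(\mathbb{A}_{g,D},\Q)$. Put $\alpha = [\omega] - c_1(\mathfrak{L}_{g,D})$. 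On each fibre $(\mathfrak{A}_{g,D})_s$ both terms restrict to the class of the type-$D$ polarization (for $\omega$ this is \eqref{EqBettiMapBettiFormFormula}, which gives $\omega|_{(\mathfrak{A}_{g,D})_s} = 2\sum_j d_j\,\mathrm{d}a_j\wedge\mathrm{d}b_j$ in Betti coordinates), so $\alpha$ has zero component in the first summand; since $[-1]^*\omega = \omega$ (from $[N]^*\omega = N^2\omega$) and $[-1]^*\mathfrak{L}_{g,D}\cong\mathfrak{L}_{g,D}$, the class $\alpha$ is $[-1]^*$-invariant and hence has zero component in the middle summand, on which $[-1]^*$ acts by $-1$. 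Thus $\alpha = (\pi^{\mathrm{univ}})^*\beta$; pulling back along the zero section $e$ gives $\beta = e^*\alpha$, and $e^*\omega = 0$ (the zero section has constant Betti coordinate $0$, so $e^*$ of \eqref{EqBettiMapBettiFormFormula} vanishes) while $e^*c_1(\mathfrak{L}_{g,D}) = c_1(e^*\mathfrak{L}_{g,D}) = 0$ because $e^*\mathfrak{L}_{g,D}$ is trivial, or at worst $2$-torsion, by the normalization in \cite[Prop.~10.8]{PinkThesis}. Hence $\beta = 0$ and $\alpha = 0$.

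The main obstacle in either route is bookkeeping the normalizations: identifying precisely which Hermitian form and cubical metric realize ``$\mathfrak{L}_{g,D}$ of polarization type $D$'', and verifying that the constant $2$ and the weights $d_j$ in \eqref{EqBettiMapBettiFormFormula}, together with the $\tfrac{1}{2\pi}$ in the Chern form, conspire to give exactly $\omega$ and not merely a positive multiple of it; in the structural route the delicate point is instead the triviality of $e^*\mathfrak{L}_{g,D}$ up to torsion, i.e. that the tautological bundle is rigidified along the zero section. Once the conventions are fixed, the remaining work is a routine change of variables between the Betti coordinates $(a,b)$ with $w = Da + Zb$ and the uniformizing coordinates $(w,Z)$.
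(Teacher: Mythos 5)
Your proposal is correct in substance, and your first route is essentially the argument the paper has in mind: the survey proves nothing here beyond recording $\mathbf{u}^*\omega = \sqrt{-1}\,\partial\bar{\partial}\bigl(2(\mathrm{Im}w)^{\!^{\intercal}}(\mathrm{Im}Z)^{-1}(\mathrm{Im}w)\bigr)$ and citing \cite[pp.374]{Mok11Form} or \cite[Lem.2.4]{CGHX:18}; deducing the proposition from Mok's formula means exactly what you do, namely recognizing this potential as $-\log\|s_0\|^2$ for the canonical (cubical) metric on the tautological bundle and matching normalizations. Your second, Leray-type route is genuinely different from anything in the paper or its cited sources: it trades the metric computation for the Deligne--Lieberman splitting of $H^2$ of an abelian scheme, and it isolates cleanly the three checks that matter (agreement on fibres, $[-1]^*$-invariance, vanishing along the zero section); its advantage is that it needs no explicit factor of automorphy, its cost is that the entire content of the proposition is then pushed into the fibrewise identity $c_1(\mathfrak{L}_{g,D})|_{(\mathfrak{A}_{g,D})_s}=2\sum_j d_j\,\mathrm{d}a_j\wedge\mathrm{d}b_j$. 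That factor of $2$ is the one point you flag but do not settle, and it is the only nontrivial normalization: since $\mathfrak{L}_{g,D}$ is (by \cite[Prop.10.8 and 10.9]{PinkThesis}) the pullback $(\mathrm{id},\lambda)^*\mathcal{P}$ of the Poincar\'{e} bundle along the polarization, its fibrewise Chern class is \emph{twice} the type-$D$ symplectic form $\sum_j d_j\,\mathrm{d}a_j\wedge\mathrm{d}b_j$, which is precisely why the coefficient $2$ appears in \eqref{EqBettiMapBettiFormFormula}; once this is written down, either of your routes closes.
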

This proposition can be deduced  from \cite[pp.374]{Mok11Form} or \cite[Lem.2.4]{CGHX:18}.

\section{Non-degenerate subvarieties}\label{SectionNonDeg}
This section is based on \cite{GaoBettiRank}. In this section, let $S$ be an irreducible variety over $\C$ and $\pi \colon \cA \rightarrow S$ be an abelian scheme of relative dimension $g \ge 1$. Let $\omega$ be the Betti form on $\cA$ from Construction~\ref{ConstrBettiForm}.

\begin{defn}\label{DefnNonDeg}
An irreducible subvariety $X$ of $\cA$ is said to be non-degenerate if one of the following equivalent conditions holds true:
\begin{enumerate}
\item[(i)] $\mathrm{rank}_{\mathrm{Betti}}(X,x) = 2 \dim X$ for some $x \in X^{\mathrm{sm}}(\C)$;
\item[(ii)] $(\omega|_X^{\wedge \dim X})_x \not= 0$ for some $x \in X^{\mathrm{sm}}(\C)$.
\end{enumerate}
\end{defn}
The conditions (i) and (ii) are equivalent by \eqref{EqBettiRankBettiForm}. By (ii) and Proposition~\ref{PropBettiFormChernClass}, non-degeneracy should be understood to be some \textit{bigness} condition of an appropriate line bundle.\footnote{In the particular case where $X$ is a \textit{projective} subvariety of $\mathfrak{A}_{g,D}$, $X$ is non-degenerate if and only if $\mathfrak{L}_{g,D}|_X$ is a big line bundle.}

\subsection{A first discussion}
In this section, we abbreviate $\mathfrak{A}_{g,D} \rightarrow \mathbb{A}_{g,D}$ by $\mathfrak{A}_g \rightarrow \mathbb{A}_g$, with the polarization type $D$ clear according to the context.

Consider the Cartesian diagram from \eqref{EqModularMap}, with the modular map $\iota$,
\begin{equation}\label{EqModularMap2}
\xymatrix{
\cA \ar[r]^-{\iota} \ar[d]_{\pi} \pullbackcorner & \mathfrak{A}_g \ar[d] \\
S \ar[r]^-{\iota_S} & \mathbb{A}_g.}
\end{equation}

The Betti map $b_{\Delta}$ from \eqref{EqBettiMap} factors through $\iota$. Thus $\mathrm{rank}_{\mathrm{Betti}}(X,x) \le 2\dim \iota(X)$ trivially holds true. So from (i) of Definition~\ref{DefnNonDeg}, $\iota|_X$ must be generically finite if $X$ is non-degenerate. On the other hand, the target of $b_{\Delta}$ is $\mathbb{T}^{2g}$. So $\mathrm{rank}_{\mathrm{Betti}}(X,x) \le 2g$ trivially holds true. So from (i) of Definition~\ref{DefnNonDeg}, $\dim X \le g$ if $X$ is non-degenerate.
To sum it up, the trivial bounds yield
\begin{equation}\label{EqNonDegNaive}
X \text{ non-degenerate} \Rightarrow \iota|_X\text{ is generically finite and }\dim X \le g.
\end{equation}
Thus, $\mathfrak{C}_g  - \mathfrak{C}_g$ defined below \eqref{EqFaltingZhang1} is a degenerate subvariety of $\mathrm{Jac}(\mathfrak{C}_g/\mathbb{M}_g)$, because its dimension is greater than $g$.

The converse of \eqref{EqNonDegNaive} is in general false; see \cite[Thm.1.4(ii)]{GaoBettiRank} for an example. But \textit{the converse of \eqref{EqNonDegNaive} is true if the geometric generic fiber of $\cA \rightarrow S$ is a simple abelian variety}; see \cite[Thm.1.4(i)(a)]{GaoBettiRank}.

Another useful observation is the following lemma.
\begin{lemma}\label{LemmaNonDegFiberProd}
Let $X$ and $Y$ be irreducible subvarieties of $\cA$ such that $\pi|_X$ and $\pi|_Y$ are both dominant.  
Assume that $X$ is non-degenerate. Then $X \times_S Y$ is a non-degenerate subvariety of $\cA\times_S \cA$.
\end{lemma}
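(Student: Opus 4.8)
The plan is to verify condition~(i) of Definition~\ref{DefnNonDeg} for $X\times_S Y$, i.e.\ to exhibit a smooth point at which the Betti rank equals twice the dimension. Write $n=\dim X$, $m=\dim Y$, $s=\dim S$; since $\pi|_X$ and $\pi|_Y$ are dominant, $X\times_S Y$ has dimension $n+m-s$. First I would select (after replacing $S$ by its smooth locus, which is harmless as all our subvarieties dominate $S$) a point $z=(x,y)\in (X\times_S Y)(\C)$ with the following properties: $z$ is a smooth point of $X\times_S Y$, lying on a single irreducible component of dimension $n+m-s$; $\pi|_X$ and $\pi|_Y$ are submersions at $x$ and $y$, so that $T_z(X\times_S Y)=T_xX\times_{T_{s_0}S}T_yY$ with $s_0=\pi(x)=\pi(y)$, of real dimension $2(n+m-s)$; and $x\in X^{\mathrm{sm}}$ is a point at which $\mathrm{rank}_{\mathrm{Betti}}(X,x)=2\dim X$, which exists precisely because $X$ is non-degenerate. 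The existence of such a $z$ is a density bookkeeping: the set of good $x$ is a non-empty analytic-open subset of $X^{\mathrm{sm,an}}$, its image under $\pi$ is open in $S^{\mathrm{an}}$ because $\pi|_X$ is submersive there, and this open set must meet the Zariski-dense open locus of $S$ over which $Y$ has smooth fibres of dimension $m-s$.

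Next I would compute the Betti rank of $X\times_S Y$ at $z$. Fix a simply-connected $\Delta\subseteq S^{\mathrm{an}}$ around $s_0$ and a Betti map $b_\Delta\colon\cA_\Delta\to\mathbb{T}^{2g}$; as in Remark~\ref{RmkBettiFormFiberProduct}, a valid choice of Betti map for the abelian scheme $\cA\times_S\cA\to S$ over $\Delta$ is $b'_\Delta=(b_\Delta\circ p_1,\,b_\Delta\circ p_2)$, and the Betti rank does not depend on this choice. At $z=(x,y)$ the restriction of $\mathrm{d}b'_\Delta$ to $T_z(X\times_S Y)=T_xX\times_{T_{s_0}S}T_yY$ is $(v,w)\mapsto(\mathrm{d}(b_\Delta)_x(v),\,\mathrm{d}(b_\Delta)_y(w))$. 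I claim this is injective: if it annihilates $(v,w)$, then $\mathrm{d}(b_\Delta|_X)_x(v)=0$, and since $\mathrm{rank}_{\mathrm{Betti}}(X,x)=2n=\dim_\R T_xX$ the map $\mathrm{d}(b_\Delta|_X)_x$ is injective, hence $v=0$; consequently $\mathrm{d}\pi_y(w)=\mathrm{d}\pi_x(v)=0$, and since $(b_\Delta,\pi)\colon\cA_\Delta\to\mathbb{T}^{2g}\times\Delta$ is a real-analytic isomorphism by Proposition~\ref{PropBettiMap}.(iii), its differential at $y$ is injective on $T_y\cA_\Delta\supseteq T_yY$, hence $w=0$. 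Therefore $\mathrm{rank}_{\mathrm{Betti}}(X\times_S Y,z)=\dim_\R T_z(X\times_S Y)=2(n+m-s)=2\dim(X\times_S Y)$, and Definition~\ref{DefnNonDeg}.(i) gives that $X\times_S Y$ is non-degenerate.

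The main obstacle is the point selection in the first paragraph: one needs a single $z=(x,y)$ that is simultaneously a smooth point of $X\times_S Y$ of the expected dimension, has the fibre-product tangent space, and lies over a base point at which $X$ already attains its maximal Betti rank. This is exactly where both dominance hypotheses are used — if $\pi|_X$ or $\pi|_Y$ failed to be dominant, the fibre over $s_0$ could miss $Y$, or $T_z(X\times_S Y)$ could be strictly larger than $T_xX\times_{T_{s_0}S}T_yY$, and the rank count would break. Everything else is linear algebra powered by the two structural inputs of $\mathsection$\ref{SectionBetti}: a non-degenerate $X$ has a point where $\mathrm{d}b_\Delta|_X$ is injective, and $(b_\Delta,\pi)$ is a local real-analytic isomorphism on $\cA_\Delta$. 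One could instead argue via the Betti form: by Remark~\ref{RmkBettiFormFiberProduct} the Betti form of $\cA\times_S\cA$ is $p_1^*\omega+p_2^*\omega$, and in the expansion of $(p_1^*\omega+p_2^*\omega)^{\wedge(n+m-s)}|_{X\times_S Y}$ the term $(p_1^*\omega)^{\wedge n}\wedge(p_2^*\omega)^{\wedge(m-s)}$ is nonzero at $z$ because $\omega^{\wedge n}|_X$ is nonzero at $x$ and $\omega$ restricts to a positive form on the fibre $\cA_{s_0}$; but the Betti-rank computation above is the shortest route.
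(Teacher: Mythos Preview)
Your argument is correct and follows essentially the same approach as the paper: select a smooth point $(x,y)$ over a base point $s_0$ where $X$ already attains full Betti rank, then verify the Betti rank of $X\times_S Y$ there equals $2(\dim X+\dim Y-\dim S)$. The only cosmetic difference is that the paper invokes Proposition~\ref{PropBettiMap}(ii) (the fibrewise group isomorphism) to account for the $Y$-contribution, whereas you use Proposition~\ref{PropBettiMap}(iii) to prove injectivity of $\mathrm{d}b'_\Delta|_{T_z(X\times_S Y)}$ directly; your version spells out the linear algebra more explicitly than the paper's one-line conclusion.
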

\begin{proof}
By generic smoothness, we may assume that $S$ is smooth, and both $X^{\mathrm{sm}} \rightarrow S$ and $Y^{\mathrm{sm}}\rightarrow S$ are smooth morphisms.

We have $\dim X\times_S Y = \dim X + \dim Y - \dim S$. Since $X$ is non-degenerate, there exists $x \in X^{\mathrm{sm}}(\C)$ such that $\mathrm{rank}_{\mathrm{Betti}}(X,x) = 2 \dim X$. Let $s = \pi(x) \in S(\C)$.

As the Betti map is a group isomorphism when restricted to $\cA_s = \pi^{-1}(s)$ (Proposition~\ref{PropBettiMap}.(ii)), we have that $\mathrm{rank}_{\mathrm{Betti}}(Y,y) \ge  2 \dim Y_s = 2(\dim Y - \dim S)$ for a generic $y \in Y_s^{\mathrm{sm}}(\C)$. Thus $y \in Y^{\mathrm{sm}}(\C)$ as $S$ is smooth and $Y^{\mathrm{sm}}\rightarrow S$ is a smooth morphism.

Now $(x,y) \in (X\times_S Y)^{\mathrm{sm}}(\C)$ and $\mathrm{rank}_{\mathrm{Betti}}(X\times_S Y, (x,y)) = 2(\dim X + \dim Y - \dim S)$. Hence we are done.
\end{proof}

The proof of the lemma above also has the following consequence. Let $M \ge 1$ be an integer. For notation, let $X^{[M]} = X\times_S \ldots\times_S X$ ($M$-copies) for any subvariety $X$ of $\cA$, and $\omega_M$ be the Betti form on $\cA^{[M]}:=\cA\times_S\cdots\times_S\cA$ ($M$-copies).
\begin{lemma}\label{LemmaNonDegPointProduct}
Assume $\pi|_{X^{\mathrm{sm}}}$ is smooth and $x \in X^{\mathrm{sm}}(\C)$ satisfies $(\omega|_X^{\wedge \dim X})_x \not= 0$. 
Then we have $(\omega_M|_{X^{[M]}}^{\wedge \dim X^{[M]}})_{(x,\ldots,x)} \not = 0$. 
\end{lemma}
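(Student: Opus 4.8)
The observation recorded by the paper is that the proof of Lemma~\ref{LemmaNonDegFiberProd} iterates, so my plan is to run essentially that argument $M$ times, keeping track of everything through the rank of the Betti map. By Remark~\ref{RmkBettiFormFiberProduct}, $\omega_M = p_1^*\omega + \cdots + p_M^*\omega$ is the Betti form on the abelian scheme $\cA^{[M]}\to S$ of relative dimension $Mg$, and the corresponding Betti map on a simply connected chart is $b_\Delta^{[M]} = (b_\Delta\circ p_1,\ldots,b_\Delta\circ p_M)$. Applying the equivalence \eqref{EqBettiRankBettiForm} to $X^{[M]}\subseteq\cA^{[M]}$ turns the target statement $(\omega_M|_{X^{[M]}}^{\wedge\dim X^{[M]}})_{(x,\ldots,x)}\neq 0$ into the rank equality $\mathrm{rank}_{\mathrm{Betti}}(X^{[M]},(x,\ldots,x)) = 2\dim X^{[M]}$, and it is this that I would prove. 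Dually, the hypothesis $(\omega|_X^{\wedge\dim X})_x\neq 0$ says, again via \eqref{EqBettiRankBettiForm}, that $(\mathrm{d}b_\Delta|_{X^{\mathrm{sm,an}}})_x$ has real rank $2\dim X$, which is the maximum possible, so this differential is \emph{injective} on $T_xX^{\mathrm{sm,an}}$.

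The core is then a short piece of linear algebra at $(x,\ldots,x)$. Set $s=\pi(x)$ and fix a simply connected $\Delta\ni s$. Since $\pi|_{X^{\mathrm{sm}}}$ is smooth, $(x,\ldots,x)$ is a smooth point of $X^{[M]}$ and $T_{(x,\ldots,x)}X^{[M],\mathrm{sm,an}}$ is the $M$-fold fibre product of $T_xX^{\mathrm{sm,an}}$ over $T_sS^{\mathrm{an}}$; moreover $\mathrm{d}\pi|_{X^{\mathrm{sm}}}$ is surjective there, so $T_xX^{\mathrm{sm,an}}\twoheadrightarrow T_sS^{\mathrm{an}}$ with kernel $T_xX_s^{\mathrm{sm,an}}$, on which $\mathrm{d}b_\Delta$ is injective because $b_\Delta|_{\cA_s}$ is a group isomorphism (Proposition~\ref{PropBettiMap}.(ii)). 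Choosing a linear section $\sigma$ of this surjection and pushing forward by the injective map $(\mathrm{d}b_\Delta|_X)_x$ exhibits its image as a genuine direct sum $W_0\oplus W_s\subseteq\R^{2g}$, where $W_0$ (of real dimension $2\dim S$) comes from $\sigma$ and $W_s=(\mathrm{d}b_\Delta)_x(T_xX_s^{\mathrm{sm,an}})$ (of real dimension $2(\dim X-\dim S)$). Using the chart $\cA_\Delta^{[M]}\cong(\mathbb{T}^{2g})^M\times\Delta$ from Proposition~\ref{PropBettiMap}.(iii), in which $b_\Delta^{[M]}$ is the projection onto $(\mathbb{T}^{2g})^M$, one reads off that $(\mathrm{d}b_\Delta^{[M]}|_{X^{[M],\mathrm{sm}}})_{(x,\ldots,x)}$ has the form $(w,u_1,\ldots,u_M)\mapsto(\tilde\sigma(w)+u_1,\ldots,\tilde\sigma(w)+u_M)$ with $\tilde\sigma(w)\in W_0$ and $u_i\in W_s$; if all components vanish then $u_i=-\tilde\sigma(w)\in W_0\cap W_s=0$, so $\tilde\sigma(w)=0$, hence $w=0$ and all $u_i=0$. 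Thus this differential is injective, and since $T_{(x,\ldots,x)}X^{[M],\mathrm{sm,an}}$ has real dimension $2\dim S + M\cdot 2(\dim X-\dim S) = 2\dim X^{[M]}$, the rank is exactly $2\dim X^{[M]}$, as needed.

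The only part that requires genuine care — and which is already implicit in Lemma~\ref{LemmaNonDegFiberProd} — is the coordinate bookkeeping: that the Betti map of the fibre product is the product of the Betti maps, that under the identification of Proposition~\ref{PropBettiMap}.(iii) the fibre $\cA_s$ and the restriction $b_\Delta|_{\cA_s}$ have the expected linear descriptions, and that $T_{(x,\ldots,x)}X^{[M],\mathrm{sm,an}}$ really is the fibre product of tangent spaces (this last is exactly where smoothness of $\pi|_{X^{\mathrm{sm}}}$ is used). The base $S$ is automatically regular near $s=\pi(x)$ because $\pi|_{X^{\mathrm{sm}}}$ is a smooth morphism out of the regular variety $X^{\mathrm{sm}}$, so the tangent-space manipulations above are literally valid; alternatively, as in Lemma~\ref{LemmaNonDegFiberProd}, one may simply reduce to $S$ smooth by generic smoothness. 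Once this setup is fixed, the injectivity argument above is immediate.
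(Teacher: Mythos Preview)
Your proof is correct and follows essentially the same approach as the paper: both reduce to the Betti rank via \eqref{EqBettiRankBettiForm}, observe that $(x,\ldots,x)$ is smooth on $X^{[M]}$ from the smoothness hypothesis on $\pi|_{X^{\mathrm{sm}}}$, and compute the rank as $2\dim X + 2(M-1)\dim X_s = 2\dim X^{[M]}$. The paper simply asserts this rank formula (leaning on the proof of Lemma~\ref{LemmaNonDegFiberProd}), while you spell out the underlying linear algebra by exhibiting the differential of $b_\Delta^{[M]}|_{X^{[M]}}$ as injective via the $W_0\oplus W_s$ decomposition.
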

\begin{proof}
We have $\mathrm{rank}_{\mathrm{Betti}}(X,x) = 2\dim X$ by \eqref{EqBettiRankBettiForm}. Let $s = \pi(x)$. By assumption, $(x,\ldots,x) \in (X^{[m]})^{\mathrm{sm}}(\C)$. Thus $\mathrm{rank}_{\mathrm{Betti}}(X^{[m]},(x,\ldots,x)) = 2 \dim X + 2(m-1)\dim X_s = 2 \dim X^{[m]}$. Hence we are done by \eqref{EqBettiRankBettiForm}.
\end{proof}

\subsection{A construction of non-degenerate subvarieties}
In applications, especially \cite{DGHUnifML} and \cite{KuehneUnifMM}, it is necessary to have some reasonable non-degenerate subvariety to start with. The following result \cite[Thm.1.2']{GaoBettiRank}, and more generally \cite[Thm.1.3]{GaoBettiRank}, play a crucial role.

\begin{thm}\label{ThmNonDegCA}
Let $S \rightarrow \mathbb{M}_g$ be a generically finite morphism. Let $\mathscr{D}_M$ be as from \eqref{EqFaltingZhang}.
Then $\mathscr{D}_M(\mathfrak{C}_g^{[M+1]}) \times_{\mathbb{M}_g} S$ is a non-degenerate subvariety of $\mathrm{Jac}(\mathfrak{C}_g / \mathbb{M}_g)^{[M]} \times_{\mathbb{M}_g} S$ for $M \ge \dim S+1$;
\end{thm}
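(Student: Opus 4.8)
The plan is to verify condition (i) of Definition~\ref{DefnNonDeg} directly, by producing one smooth point of maximal Betti rank. First I reduce to a cleaner setting. The formation of the Betti map commutes with base change, so $\mathrm{rank}_{\mathrm{Betti}}$ is unchanged under pullback along a generically finite morphism, and moreover $\mathscr{D}_M$ is generically finite onto its image; hence I may replace $S$ by the closure $Z\subseteq\mathbb{M}_g$ of the image of $S\to\mathbb{M}_g$, an irreducible subvariety with $\dim Z=\dim S$. So it suffices to show that $\cX_Z:=\mathscr{D}_M(\mathfrak{C}_g^{[M+1]})\times_{\mathbb{M}_g}Z$, a subvariety of the abelian scheme $\mathrm{Jac}(\mathfrak{C}_g/\mathbb{M}_g)^{[M]}\times_{\mathbb{M}_g}Z$ over $Z$, is non-degenerate whenever $M\ge\dim Z+1$. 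Here $\dim\cX_Z=\dim Z+M+1$ while the ambient abelian scheme has relative dimension $Mg\ge\dim Z+M+1$; the Betti target $\mathbb{T}^{2Mg}$ is therefore just barely large enough, with equality $2Mg=2\dim\cX_Z$ exactly when $g=2$ and $M=\dim Z+1$.

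Next I localize. Fix a very general $z\in Z$, with corresponding curve $C$, a simply connected $\Delta\ni z$ in $Z^{\mathrm{an}}$ (inside the smooth locus), and a local holomorphic section of $\mathfrak{C}_g|_\Delta\to\Delta$, giving a relative Abel--Jacobi embedding $j\colon\mathfrak{C}_g|_\Delta\hookrightarrow\mathrm{Jac}(\mathfrak{C}_g/\mathbb{M}_g)|_\Delta$. By Remark~\ref{RmkBettiFormFiberProduct} a Betti map of the $M$-th fibered power over $\Delta$ is $(b_\Delta,\dots,b_\Delta)$; and since $b_\Delta$ restricts to a group isomorphism on each fiber (Proposition~\ref{PropBettiMap}(ii)), writing $\beta:=b_\Delta\circ j\colon\mathfrak{C}_g|_\Delta\to\mathbb{T}^{2g}$ the Betti coordinates along $\cX_Z|_\Delta$ are computed by the real analytic map
\[
G(z;P_0,\dots,P_M)=\big(\beta(z,P_1)-\beta(z,P_0),\dots,\beta(z,P_M)-\beta(z,P_0)\big)\colon(\mathfrak{C}_g|_\Delta)^{[M+1]}\to(\mathbb{T}^{2g})^M .
\]
As $\mathscr{D}_M$ is generically finite onto its image and $\dim_\R(\mathfrak{C}_g|_\Delta)^{[M+1]}=2(\dim Z+M+1)=2\dim\cX_Z$, it remains to prove that $G$ is an immersion at some point $x$; then the corresponding point of $\cX_Z$ is smooth (generic smoothness) and has Betti rank $2\dim\cX_Z$, which gives non-degeneracy by Definition~\ref{DefnNonDeg}(i).

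The final step is the differential analysis of $G$. One computes
\[
dG(\dot z;\dot P_0,\dots,\dot P_M)=\big(W_i\,\dot z+\partial_P\beta|_{P_i}\dot P_i-\partial_P\beta|_{P_0}\dot P_0\big)_{i=1}^{M},
\]
where $\partial_P\beta|_{P}\colon T_PC\to\R^{2g}$ is injective with image a real $2$-plane $\Pi_P$ --- it is the fiberwise embedding $j$ followed by the fiberwise isomorphism $b_\Delta$, so $\Pi_P$ is the image of the complex tangent line of $j(C)$ at $P$ --- and $W_i:=\partial_z\beta|_{P_i}-\partial_z\beta|_{P_0}\colon T_zZ\to\R^{2g}$ records the variation along $Z$ of the Betti coordinates of the section $[P_i-P_0]$ of $\mathrm{Jac}(\mathfrak{C}_g/\mathbb{M}_g)$, governed by the variation of Hodge structure of $\mathfrak{C}_g|_Z\to Z$. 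If $dG=0$ and $\dot z=0$, then $\partial_P\beta|_{P_0}\dot P_0\in\Pi_{P_0}\cap\Pi_{P_1}=\{0\}$, because the complex tangent directions of $j(C)$ at distinct very general points are distinct (non-constancy of the Gauss map of $j(C)$); hence $\dot P_0=0$, and then all $\dot P_i=0$. The case $\dot z\ne0$ is the main obstacle: one must show that $W_1\dot z,\dots,W_M\dot z$ cannot simultaneously be absorbed by the planes $\Pi_{P_0},\Pi_{P_1},\dots,\Pi_{P_M}$ in the above system unless $\dot z=0$. This is exactly where the hypothesis $M\ge\dim Z+1$ is used --- there are strictly more independent curve directions $\dot P_i$ than base directions $\dot z$ --- and where a genuinely global input from \cite{GaoBettiRank} is needed: the classification of the locus on which the Betti rank of a fibered power of $\mathfrak{C}_g$ drops (equivalently, of the degenerate subvarieties, in terms of weakly special subvarieties of the ambient mixed Shimura variety, using that $\mathrm{Jac}(\mathfrak{C}_g/\mathbb{M}_g)\to\mathbb{M}_g$ has geometrically simple generic fiber) together with a dimension estimate for that locus. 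Granting this, $\ker dG=0$ at a very general point, so $G$ is an immersion there, and $\cX_Z$ --- hence $\mathscr{D}_M(\mathfrak{C}_g^{[M+1]})\times_{\mathbb{M}_g}S$ --- is non-degenerate. The crux is thus the $\dot z\ne0$ case of this injectivity statement: controlling the interaction between the Hodge-theoretic base variation $W_i\dot z$ and the fiberwise directions $\Pi_{P_i}$; the rest is bookkeeping.
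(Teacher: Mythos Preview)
Your reduction to $Z\subseteq\mathbb{M}_g$ and your local parametrization via $G$ are both fine, and the $\dot z=0$ case is handled correctly by the non-constancy of the Gauss map. But the proof has a genuine gap precisely where you say it does: the $\dot z\neq 0$ case. You do not prove anything there; you invoke ``the classification of the locus on which the Betti rank of a fibered power of $\mathfrak{C}_g$ drops \dots\ from \cite{GaoBettiRank}'' and then write ``Granting this.'' But Theorem~\ref{ThmNonDegCA} \emph{is} \cite[Thm.~1.2$'$]{GaoBettiRank}, and the classification you cite is the criterion \cite[Thm.~1.1]{GaoBettiRank} from which it is deduced --- so the citation is circular. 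Everything up to that point is just a restatement of Definition~\ref{DefnNonDeg}(i) in local coordinates; the content of the theorem lies entirely in the step you have black-boxed.

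Your heuristic ``there are strictly more independent curve directions $\dot P_i$ than base directions $\dot z$'' is not a proof and is in fact misleading. Naive dimension counting is exactly what makes such degeneracy \emph{unlikely}, but it does not exclude it: degenerate configurations occur along weakly special subvarieties of the ambient mixed Shimura variety, and the substance of \cite{GaoBettiRank} is to show (via the mixed Ax--Schanuel theorem for $\mathfrak{A}_g$ and a finiteness result for weakly optimal subvarieties, as outlined after Theorem~\ref{ThmDegLocusZarClosed}) that these do not exhaust $\mathscr{D}_M(\mathfrak{C}_g^{[M+1]})$ once $M\ge\dim S+1$. The survey itself does not give an independent proof either, but it states this honestly: Theorem~\ref{ThmNonDegCA} is cited as a special case of \cite[Thm.~10.1]{GaoBettiRank} (stated here as Theorem~\ref{ThmNonDegXA}), and the functional-transcendence proof strategy is sketched. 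Your write-up, by contrast, presents the same citation as the conclusion of a differential-geometric argument that has not actually been carried out.
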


Theorem~\ref{ThmNonDegCA} is a particular case of the more general \cite[Thm.10.1]{GaoBettiRank}, which we state now. We expect \cite[Thm.10.1]{GaoBettiRank} (with $t=0$) to have more applications, for example for the uniform Mordell--Lang conjecture for higher dimensional subvarieties of abelian varieties.

Let $\pi \colon \cA \rightarrow S$ be an abelian scheme as at the beginning of this section, and $\iota\colon \cA \rightarrow \mathfrak{A}_g$ be the modular map from \eqref{EqModularMap}. For each integer $M \ge 1$, set $\cA^{[M]}:=\cA\times_S\cdots\times_S\cA$ ($M$-copies). Define the \textit{Faltings--Zhang} map
\begin{equation}\label{EqFaltingsZhangGeneral}
\mathscr{D}_M^{\cA} \colon \cA^{[M+1]} \rightarrow \cA^{[M]}
\end{equation}
to be the $S$-morphism fiberwise defined by $(P_0,P_1,\ldots,P_M) \mapsto (P_1-P_0,\ldots,P_M-P_0)$. 

For each $M \ge 1$, let $\iota^{[M]} \colon \cA^{[M]} \rightarrow \mathfrak{A}_{Mg}$ be the modular map. As the convention of \cite{GaoBettiRank} is somewhat different from standard notation, we state the result under the formulation of \cite[Thm.4.4.4]{GaoHDR}.
\begin{thm}\label{ThmNonDegXA}
 Let $X$ be an irreducible subvariety of $\cA$ such that $\pi|_X$ is dominant to $S$.  
Assume that $X_{\bar{\eta}}$ (the geometric generic fiber of $X \rightarrow S$) is irreducible.\footnote{This assumption is harmless because it can always be achieved in the following way. There exists a quasi-finite \'{e}tale morphism $S' \rightarrow S$ such that some component $X'$ of  $X \times_S S'$ satisfies that $X'_{\bar{\eta}}$ is irreducible. But $X'$ dominates $X$ under the natural projection $X\times_S S' \rightarrow X$. In applications, we apply this theorem to $X' \subseteq \cA\times_S S' \rightarrow S'$.}
 
Assume furthermore
 \begin{enumerate}
 \item[(a)] $\dim X > \dim S$.
 \item[(b)] $X_s$ is generates $\cA_s$ for each $s \in S(\C)$.
 \item[(c)] On the geometric generic fiber $\cA_{\bar{\eta}}$ of $\cA\rightarrow S$, the stabilizer of $X_{\bar{\eta}}$, which we denote by $\mathrm{Stab}_{\cA_{\bar{\eta}}}(X_{\bar{\eta}})$, is finite.
 \end{enumerate}
Then as subvarieties of $\cA^{[M]}$, we have that
\begin{enumerate}
\item[(i)] $X^{[M]}$ is non-degenerate if $M \ge \dim S$ and $\iota^{[M]}|_{X^{[M]}}$ is generically finite.
\item[(ii)] $\mathscr{D}_M^{\cA}(X^{[M+1]})$ is non-degenerate if $M \ge \dim X$ and $\iota^{[M]}|_{\mathscr{D}_M^{\cA}(X^{[M+1]})}$ is generically finite.
\end{enumerate}
Here $X^{[M]} = X\times_S \cdots \times_S X$ ($M$-copies) for each integer $M \ge 1$.
\end{thm}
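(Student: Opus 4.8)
\emph{Sketch of the argument.} The plan is to recast non-degeneracy as a statement about the analytic fibres of the Betti map, to reduce to the universal abelian variety where those fibres become explicit linear objects, and then to run a dimension count controlled by an Ax--Schanuel theorem, in which hypotheses $(a)$--$(c)$ and the lower bound on $M$ do exactly the work needed. Write $W$ for $X^{[M]}$ in case $(i)$ and for $\mathscr{D}_M^{\cA}(X^{[M+1]})$ in case $(ii)$, an irreducible subvariety of $\cA^{[M]}$ dominating $S$, and fix a simply-connected $\Delta\subseteq S^{\mathrm{an}}$ carrying a Betti map $b_\Delta^{[M]}\colon\cA^{[M]}_\Delta\to\mathbb{T}^{2Mg}$. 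By Construction~\ref{ConstrBettiForm} and \eqref{EqBettiRankBettiForm}, $W$ is non-degenerate exactly when $\mathrm{rank}_{\R}(\mathrm{d}b_\Delta^{[M]}|_{W^{\mathrm{sm,an}}})$ attains its maximum $2\dim W$ somewhere, i.e.\ when the general analytic fibre of $b_\Delta^{[M]}|_W$ is $0$-dimensional. These fibres are complex analytic by Proposition~\ref{PropBettiMap}.(i), and the fibres of $b_\Delta^{[M]}$ itself foliate $\cA^{[M]}_\Delta$ into copies of $\Delta$ — the ``flat sections'' of $\cA^{[M]}/S$ obtained by freezing the period coordinates (Proposition~\ref{PropBettiMap}.(iii)). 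Since $\iota^{[M]}|_W$ is assumed generically finite, the Betti map does not collapse $W$ for trivial reasons, so the whole question becomes: does a general leaf of this foliation meet $W$ only in dimension $0$?

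Via the modular map of \eqref{EqModularMap2} and its powers $\iota^{[M]}\colon\cA^{[M]}\to\mathfrak{A}_{Mg}$, the Betti map, the Betti form $\omega_M$ and the leaf foliation are all pulled back from $\mathfrak{A}_{Mg}$; generic finiteness of $\iota^{[M]}|_W$ lets us replace $W$ by its (generically finite) image in $\mathfrak{A}_{Mg}$. On the uniformization $\mathbf{u}\colon\C^{Mg}\times\mathfrak{H}_{Mg}\to\mathfrak{A}_{Mg}^{\mathrm{an}}$, the explicit shape of $\mathbf{u}^*\omega$ recalled in $\mathsection$\ref{SubsectionBettiForm} exhibits $\omega$ as a semi-positive $(1,1)$-form whose null directions are exactly the leaf directions; thus the vanishing of $\omega_M^{\wedge\dim W}|_W$ is a linear-algebraic degeneracy of an analytic component of $\mathbf{u}^{-1}(W)$ along those directions, and a positive-dimensional intersection of $W$ with a single leaf is the only obstruction to non-degeneracy.

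Ruling out that obstruction is the heart of the matter and the step I expect to be hardest. Suppose $W$ contained a positive-dimensional piece of a flat leaf. The plan is to feed this into the Ax--Schanuel theorem for the mixed Shimura variety $\mathfrak{A}_{Mg}$ (one of the main tools behind \cite{GaoBettiRank}), which forces any algebraic subvariety with such an atypical intersection with a leaf to lie in a proper \emph{weakly special} subvariety $T$; translated back to $\cA^{[M]}/S$, this says $W$ is contained in a subscheme whose geometric generic fibre is a coset of a proper abelian subvariety of $\cA_{\bar\eta}^{[M]}$ and/or which lies over a proper subvariety of $S$. One then excludes this using the hypotheses: $(b)$ forces $X_{\bar\eta}$, hence the diagonal copies of it inside $X^{[M]}$ (resp.\ the generic fibre of $Y$), to generate the ambient abelian variety, killing the ``coset of a proper subgroup'' alternative; $(c)$, via a monodromy computation — the image of $\pi_1(S^{\mathrm{an}})$ on $H_1$ of the fibres is large because $X_{\bar\eta}$ generates and has finite stabiliser — forces the ``base part'' of $T$ to be all of $S$; and $(a)$ together with $M\ge\dim S$ (resp.\ $M\ge\dim X$) makes $\dim W$ strictly too large to fit inside such a $T$, the desired contradiction. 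In case $(i)$ it is cleanest to settle first the boundary value $M=\dim S$ in this way, and then get every $M\ge\dim S$ for free by writing $X^{[M]}=X^{[\dim S]}\times_S X^{[M-\dim S]}$ and invoking Lemma~\ref{LemmaNonDegFiberProd}.

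For case $(ii)$, the inequality $M\ge\dim X$ enters twice: it forces $(M+1)\dim X_{\bar\eta}\le Mg$, and it provides enough copies so that, by a standard argument using $(b)$ and $(c)$, the generic fibre of $X^{[M+1]}\to \mathscr{D}_M^{\cA}(X^{[M+1]})$ is cut down to the (finite) stabiliser; hence $\mathscr{D}_M^{\cA}|_{X^{[M+1]}}$ is generically finite onto $Y:=\mathscr{D}_M^{\cA}(X^{[M+1]})$, so $\dim Y=(M+1)\dim X_{\bar\eta}+\dim S$ and $Y_{\bar\eta}$ again generates $\cA_{\bar\eta}^{[M]}$ with finite stabiliser. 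The argument of the previous paragraph then applies verbatim with $W=Y$. All told, the only genuinely delicate input is the Ax--Schanuel step and the accompanying bookkeeping that makes $(a)$--$(c)$ override the dimension count precisely at the thresholds $M=\dim S$ and $M=\dim X$; everything else is formal manipulation of the Betti form together with standard facts about abelian schemes and their modular maps.
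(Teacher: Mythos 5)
First, a point of calibration: the survey does not actually prove Theorem~\ref{ThmNonDegXA}. It quotes it as \cite[Thm.10.1]{GaoBettiRank} (in the formulation of \cite[Thm.4.4.4]{GaoHDR}) and only sketches the strategy of that external proof: translate degeneracy into an unlikely-intersection statement in the universal abelian variety via the mixed Ax--Schanuel theorem, then control the resulting union of weakly optimal subvarieties. Your sketch follows exactly this route --- Betti-map fibres as a flat foliation, reduction to $\mathfrak{A}_{Mg}$ via $\iota^{[M]}$, Ax--Schanuel to force an offending $W$ into a weakly special subvariety, and hypotheses (a)--(c) plus the thresholds on $M$ to exclude that alternative --- so at the level at which the paper treats this result, your approach is the intended one. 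Your reductions (settling $M=\dim S$ first and bootstrapping via Lemma~\ref{LemmaNonDegFiberProd}; deducing generic finiteness of $\mathscr{D}_M^{\cA}|_{X^{[M+1]}}$ from (b), (c)) are reasonable, though note that the generic-finiteness hypothesis on $\iota^{[M]}|_{X^{[M]}}$ for the given $M$ does not formally hand you the corresponding hypothesis for $M=\dim S$, so that reduction needs a word of justification.

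The genuine gap is that the decisive step is announced rather than executed. Everything before the Ax--Schanuel invocation is formal; the entire content of the theorem lies in (1) the precise classification of the weakly special/weakly optimal subvarieties that can witness degeneracy, (2) the finiteness of the union of such witnesses (which in \cite{GaoBettiRank} requires a second, finiteness-type Ax--Schanuel statement and the Habegger--Pila machinery), and (3) the dimension bookkeeping showing that (a)--(c) together with $M\ge\dim S$ (resp.\ $M\ge\dim X$) are incompatible with each witness. Your proposal gestures at all three (``(b) kills the coset alternative, (c) via monodromy forces the base part to be $S$, (a) makes $\dim W$ too large'') but verifies none of them; in particular the assertion that $Y_{\bar\eta}=\mathscr{D}_M^{\cA}(X^{[M+1]})_{\bar\eta}$ again has finite stabiliser and generates $\cA^{[M]}_{\bar\eta}$ is stated without argument, and the monodromy claim under (c) is exactly the kind of statement that needs (and in the source receives) a careful proof. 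As it stands this is a correct road map to the proof in \cite{GaoBettiRank}, not a proof.
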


In practice, to verify the extra generic finiteness required in (i) and (ii), one can sometimes use the following observations.
For (i),  $\iota^{[M]}|_{X^{[M]}}$ is generically finite if $\iota|_X$ is generically finite. For (ii),  $\iota^{[M]}|_{\mathscr{D}_M^{\cA}(X^{[M+1]})}$ is generically finite if $\iota$ (and not $\iota|_X$) is quasi-finite.




Although hypothesis (b) implies hypothesis (a), but we still list hypothesis (a) here to emphasize that this construction does not work if $X \rightarrow S$ is a multi-section.

In fact, \cite[Thm.10.1]{GaoBettiRank} is stronger than Theorem~\ref{ThmNonDegXA}. It says that Theorem~\ref{ThmNonDegXA} still holds true with hypothesis (c) replaced by the weaker hypothesis
 \begin{enumerate}
  \item[(c')] On the geometric generic fiber $\cA_{\bar{\eta}}$ of $\cA\rightarrow S$, the neutral component of $\mathrm{Stab}_{\cA_{\bar{\eta}}}(X_{\bar{\eta}})$ is contained in the $\bar{\C(\eta)}/\C$-trace of $\cA_{\bar{\eta}}$, where $\bar{\C(\eta)}$ is an algebraic closure of the function field of $S$.
\end{enumerate}


For the general criterion of non-degeneracy, we refer to \cite[Thm.1.1]{GaoBettiRank} and \cite[Thm.4.3.1]{GaoHDR}. In some particular cases, the criterion can be simplified; see \textit{e.g.} \cite[(1.4)]{GaoBettiRank}.

\subsection{The degeneracy locus}
In this subsection we state another fundamental result about non-degeneracy. It claims that being non-degenerate is in fact an algebraic property.

\begin{thm}\label{ThmDegLocusZarClosed}
To each $X$, one can associate an intrinsically defined Zariski closed subset $X^{\mathrm{deg}}$ of $X$ such that the following property holds true: 
$X$ is non-degenerate if and only if $X \not= X^{\mathrm{deg}}$. 

Moreover if $X$ is defined over an algebraically closed field $F$, so is $X^{\mathrm{deg}}$.
\end{thm}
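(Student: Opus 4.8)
The plan is to construct $X^{\mathrm{deg}}$ fiberwise over $S$ by analyzing where the Betti rank drops, and to show that this locus is Zariski closed by exploiting the fact that the Betti form $\omega$ is (the analytification of) the first Chern class of an algebraic line bundle. Concretely, recall from Definition~\ref{DefnNonDeg} and \eqref{EqBettiRankBettiForm} that $X$ is non-degenerate if and only if $(\omega|_X^{\wedge \dim X})_x \neq 0$ for some $x \in X^{\mathrm{sm}}(\C)$. So the natural candidate is
\[
X^{\mathrm{deg}} := \overline{\{x \in X^{\mathrm{sm}}(\C) : (\omega|_X^{\wedge \dim X})_x = 0\}} \cup X^{\mathrm{sing}},
\]
where the closure is Zariski closure inside $X$. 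The content of the theorem is then twofold: (a) this set is genuinely Zariski closed (a priori it is only the closure of a real-analytically defined set), and (b) it descends to any field of definition of $X$.

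For part (a), the key input is Proposition~\ref{PropBettiFormChernClass}: after pulling back to $\mathfrak{A}_{Mg}$ via the modular map (using $M=1$ here, or working directly on $\cA$), the cohomology class of $\omega$ agrees with $c_1(\mathfrak{L}_{g,D})$. More is true at the level of forms — $\omega$ is a smooth semipositive representative of this Chern class, and on any fixed abelian fiber $\cA_s$ it is translation-invariant, hence its wedge powers along $X_s$ measure exactly whether $\mathfrak{L}_{g,D}|_{X_s}$ is big, i.e. whether $X_s$ has the expected degree. First I would reduce to the case where $X \to S$ is flat with irreducible fibers (after stratifying $S$ and passing to a generically finite cover, which is harmless by the footnote-style arguments already used in the excerpt). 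Then the condition $(\omega|_X^{\wedge \dim X})_x = 0$ for all $x$ over a given $s$ is equivalent to $\dim X_s > \dim X - \dim S$ plus the degeneracy of the lower-dimensional fiber, and one shows by descending induction on $\dim S$ that the "bad" locus in the base — the set of $s$ over which $X_s$ fails the generic finiteness / dimension count that non-degeneracy forces via \eqref{EqNonDegNaive} — is constructible, indeed closed, by standard semicontinuity of fiber dimension and upper semicontinuity of degrees of fibers with respect to $\mathfrak{L}_{g,D}$. The crucial point making the real-analytic set algebraic is that $(\omega|_X^{\wedge\dim X})$ vanishing identically on a subvariety $Z \subseteq X$ is equivalent, by Proposition~\ref{PropBettiFormChernClass} and semipositivity, to the purely algebraic statement that $\dim \iota(Z) < \dim Z$ or that $Z$ is contained in a translate of an abelian subscheme in a way that kills the relevant Chern number; one then takes $X^{\mathrm{deg}}$ to be the union of all such maximal $Z$, which is a finite algebraic union.

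For part (b), once $X^{\mathrm{deg}}$ has been characterized purely in terms of the algebraic geometry of $X \to S$, the modular map $\iota$, and the line bundle $\mathfrak{L}_{g,D}$ — all of which are defined over the base field — Galois equivariance is automatic: if $X$ is defined over an algebraically closed field $F \subseteq \C$, then for any $\sigma \in \aut(\C/F)$ we have $\sigma(X^{\mathrm{deg}}) = (\sigma X)^{\mathrm{deg}} = X^{\mathrm{deg}}$ because the defining conditions (fiber dimension jumps, generic finiteness of $\iota^{[M]}|_{(-)^{[M]}}$, degree of $\mathfrak{L}_{g,D}$ on fibers) are preserved under $\sigma$, and then a subvariety of $X$ fixed by $\aut(\C/F)$ is defined over $F$.

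I expect the main obstacle to be part (a): turning the \emph{pointwise real-analytic} vanishing condition $(\omega|_X^{\wedge\dim X})_x=0$ into an honest algebraic (Zariski-closed) condition. The heuristic "$\omega = c_1(\mathfrak{L}_{g,D})$, so degeneracy $=$ non-bigness, which is algebraic" is clean for \emph{projective} $X$ (as in the footnote to Definition~\ref{DefnNonDeg}), but for a general subvariety of a quasi-projective abelian scheme one must control what happens over the boundary of $S$ and ensure the stratification argument terminates; the honest proof in \cite[\S9]{GaoBettiRank} proceeds by an induction on $\dim S$ using the structure theory of the Betti map and the mixed Shimura variety $\mathfrak{A}_{g,D}$, together with the fact — provable from Theorem~\ref{ThmNonDegXA} and its relatives — that degeneracy propagates in a controlled way to fibered powers and to the images under Faltings--Zhang maps. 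I would follow that route, taking $X^{\mathrm{deg}}$ to be the locus where the appropriate Betti rank / generic-finiteness criterion from \cite[Thm.1.1]{GaoBettiRank} fails, and checking closedness stratum by stratum.
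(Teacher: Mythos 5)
There is a genuine gap, and it sits exactly where you predicted the main obstacle would be. Your central claim in part (a) --- that the identical vanishing of $(\omega|_X^{\wedge\dim X})$ along a subvariety $Z\subseteq X$ is equivalent to the ``purely algebraic'' condition that $\dim\iota(Z)<\dim Z$ or that $Z$ sits in a translate of an abelian subscheme killing the relevant Chern number --- is false. The survey states this explicitly: the converse of \eqref{EqNonDegNaive} fails in general (see \cite[Thm.1.4(ii)]{GaoBettiRank}), i.e.\ there are subvarieties with $\iota|_X$ generically finite and $\dim X\le g$ whose Betti rank nevertheless drops everywhere. The Betti rank can degenerate for functional-transcendence reasons that are invisible to fiber-dimension counts, degrees of fibers with respect to $\mathfrak{L}_{g,D}$, or any semicontinuity argument; Proposition~\ref{PropBettiFormChernClass} is a statement about cohomology classes and only converts non-degeneracy into bigness of $\mathfrak{L}_{g,D}|_X$ when $X$ is \emph{projective} (the footnote to Definition~\ref{DefnNonDeg}), which is not the situation here and in any case says nothing about the locus \emph{inside} $X$ where the form degenerates. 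Relatedly, your candidate $X^{\mathrm{deg}}$ (Zariski closure of the pointwise real-analytic vanishing locus) does not obviously satisfy even the ``only if'' direction: a proper closed real-analytic subset of an irreducible variety can be Zariski dense (e.g.\ $\R\subset\C$ is the zero set of $\mathrm{Im}\,z$), so properness of that closure for non-degenerate $X$ is precisely the content to be proved, not a starting point.

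The route the paper actually takes (via \cite[Thm.1.7 and 1.8]{GaoBettiRank}, as outlined at the end of $\mathsection$\ref{SectionNonDeg}) is different in kind: Step~1 uses the mixed Ax--Schanuel theorem for $\mathfrak{A}_g$ to show that $X$ is degenerate if and only if $X$ is a union of subvarieties satisfying an unlikely-intersection condition (phrased in terms of bi-algebraic/weakly special data of the mixed Shimura variety, not in terms of $\iota$ being generically finite); Step~2 invokes a Habegger--Pila type finiteness theorem for weakly optimal subvarieties (\cite[Thm.1.4]{GaoMixedAS}) to show that this union is a \emph{finite} union of subvarieties, which is what makes $X^{\mathrm{deg}}$ Zariski closed and intrinsically (hence Galois-equivariantly) defined. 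Your final paragraph correctly names this route, but it is not the argument you develop; the stratification-plus-semicontinuity argument you propose in its place would not identify the correct locus and cannot close the real-analytic-to-algebraic gap. Part (b) of your proposal (Galois descent once the characterization is algebraic) is fine, but it is downstream of the missing step.
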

This formulation of Theorem~\ref{ThmDegLocusZarClosed} is taken from \cite[Thm.4.3.1 and Prop.4.2.4]{GaoHDR}. 
The result follows essentially from \cite[Thm.1.7 and Thm.1.8]{GaoBettiRank} and their proofs.


To be able to compute the constant $c(g)$ from Theorem~\ref{ConjMazur}, one needs a  better understanding of $X^{\mathrm{deg}}$. We refer to \cite[$\mathsection$1.2]{GaoBettiRank} and \cite[$\mathsection$4.2]{GaoHDR} for the definition and some further discussions on $X^{\mathrm{deg}}$.

We close the main part of this section by outlining the main steps of to study the non-degeneracy in \cite{GaoBettiRank}. Both Theorem~\ref{ThmNonDegXA} and Theorem~\ref{ThmDegLocusZarClosed} are proved following this guideline, where functional transcendence and the unlikely intersection theory are heavily used. 
The major step is to establish a criterion, \textit{in simple geometric terms}, for an irreducible subvariety $X$ of the universal abelian variety $\mathfrak{A}_g$ to be degenerate. Roughly speaking, the proof of the desired criterion is divided into two steps. Step~1 transfers the degeneracy property to an \textit{unlikely intersection problem} in $\mathfrak{A}_g$ by invoking the \textit{mixed Ax--Schanuel theorem} for $\mathfrak{A}_g$ \cite[Thm.1.1]{GaoMixedAS}. More precisely we show that $X$ is degenerate if and only if $X$ is the union of subvarieties satisfying an appropriate unlikely intersection property. Step~2 solves this unlikely intersection problem, and the key point is to use \cite[Thm.1.4]{GaoMixedAS} to prove that the union mentioned above is a finite union. In this step the notion of \textit{weakly optimal subvarieties} introduced by Habegger--Pila \cite{HabeggerPilaENS} is involved.

\small

\subsection{$1$-parameter case}
When $\dim S = 1$, the criterion of non-degeneracy and the degeneracy locus are easier to describe.

\begin{defn}\label{DefinitionSpecialSubvariety}
An irreducible closed subvariety $Y$ of $\cA$ is 
called a \textbf{generically special} subvariety of $\cA$, or just
generically special, if
it dominates $S$ and if
its geometric generic fiber $Y\times_S \spec{\overline{\C(S)}}$ 
is a finite
 union of $
 (Z\otimes_\C \overline{\C(S)})+B$, where $Z$ is a closed
irreducible subvariety of $A^{\overline{\C(S)}/\C}$ (the $\overline{\C(S)}/\C$-trace of $A$) and $B$ is a torsion
coset in $A\otimes_{\C(S)} \overline{\C(S)}$.
\end{defn}

We then have the following results from \cite[Thm.5.1, Prop.1.3]{GaoHab}.
\begin{thm}
Assume $\dim S = 1$. Let $X$ be  an irreducible closed subvariety of $\cA$ which is dominant to $S$. Then
\begin{enumerate}
\item[(i)] $X$ is degenerate if and only if $X$ is generically special;
\item[(ii)] we have
\[
X^{\mathrm{deg}} = \bigcup_{\substack{Y \subseteq X \\ Y\text{ is a generically
special}\\ \text{subvariety of $\cA$}}} Y.
\]
The union is a finite union.
\end{enumerate}
\end{thm}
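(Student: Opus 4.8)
The plan is to prove the two assertions about $X^{\mathrm{deg}}$ when $\dim S = 1$ by leveraging the general structure theory recalled in Theorem~\ref{ThmDegLocusZarClosed} together with the $1$-parameter version of the mixed Ax--Schanuel theorem. First I would reduce to the case where $\cA \to S$ is the pullback of the universal family $\mathfrak{A}_{g,D} \to \mathbb{A}_{g,D}$ under a modular map $\iota_S \colon S \to \mathbb{A}_{g,D}$, as in \eqref{EqModularMap}; since $\dim S = 1$ the image $\iota_S(S)$ is a curve (or a point, in which case the whole family is isotrivial and the statement is handled directly by the trace term). The key reformulation: by Definition~\ref{DefnNonDeg}(ii) and the description of the Betti rank, $X$ is degenerate precisely when $\mathrm{rank}_{\mathrm{Betti}}(X,x) < 2\dim X$ at every smooth point, i.e. the fibers of the Betti map $b_\Delta$ meet $X^{\mathrm{sm,an}}$ in positive-dimensional complex analytic sets whose dimension is too large. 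Because $\dim S = 1$, the ``defect'' in the Betti rank forces $X$ to be foliated by subvarieties that are atypical for an intersection inside $\mathfrak{A}_{g,D}$ — this is exactly where the mixed Ax--Schanuel theorem \cite[Thm.1.1]{GaoMixedAS} enters.

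For part (i), the forward direction (generically special $\Rightarrow$ degenerate) is the easy one: if the geometric generic fiber of $Y$ is a finite union of translates $(Z \otimes \overline{\C(S)}) + B$ with $Z$ in the trace and $B$ a torsion coset, then along $Y$ the Betti coordinates are locally constant in the directions coming from $Z$ and from the torsion part $B$ (torsion points have constant Betti coordinates, and trace subvarieties do not vary with $s$), so $\mathrm{d}b_\Delta|_{Y^{\mathrm{sm,an}}}$ has rank strictly less than $2\dim Y$; hence $Y$, and any $X$ containing such a $Y$ as a dense subset, is degenerate. The hard direction (degenerate $\Rightarrow$ generically special, at least generically) is where I would invoke the two-step argument outlined at the end of the ``degeneracy locus'' subsection: Step~1 uses mixed Ax--Schanuel to show that degeneracy of $X$ is equivalent to $X$ being a union of subvarieties each satisfying an unlikely-intersection condition relative to the weakly special subvarieties of $\mathfrak{A}_{g,D}$; Step~2, using the finiteness result \cite[Thm.1.4]{GaoMixedAS} and the weakly optimal subvarieties of Habegger--Pila \cite{HabeggerPilaENS}, shows this union is finite, and — crucially in the $1$-parameter case — that the only weakly special subvarieties of the right dimension whose generic fiber can be ``too constant'' over a curve base are exactly the generically special ones (the combinatorics of dimensions collapses because $\dim S = 1$). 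This is the step I expect to be the main obstacle: verifying that the abstract weakly optimal components produced by Ax--Schanuel are precisely translates of trace subvarieties by torsion cosets requires a careful analysis of the Betti foliation and the monodromy of $\cA \to S$, and is really the content of \cite[Thm.5.1]{GaoHab}.

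For part (ii), once (i) is established I would argue as follows. By Theorem~\ref{ThmDegLocusZarClosed}, $X^{\mathrm{deg}}$ is a well-defined Zariski closed subset of $X$, and by its intrinsic characterization a point lies in $X^{\mathrm{deg}}$ iff it lies on a positive-dimensional ``atypical'' piece; combined with (i) this says $X^{\mathrm{deg}} = \bigcup_Y Y$ where $Y$ ranges over the generically special subvarieties of $\cA$ contained in $X$ (the subvarieties of $X$ that are themselves degenerate, in the sense that $Y = Y^{\mathrm{deg}}$, are exactly the generically special ones by (i)). To see the union is finite, I would note that each generically special $Y \subseteq X$ is determined by a pair (trace subvariety $Z$, torsion coset $B$), and by \cite[Thm.1.4]{GaoMixedAS} / the finiteness of weakly optimal subvarieties of $X$ there are only finitely many maximal such $Y$; every generically special subvariety of $X$ is contained in one of these finitely many maximal ones, so the union over all generically special $Y \subseteq X$ equals the finite union over the maximal ones. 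Since the forward direction of (i) is elementary and the finiteness in (ii) is a formal consequence of the finiteness input, the genuine work — and the expected bottleneck — is concentrated in the hard direction of (i), i.e. in deploying the mixed Ax--Schanuel machinery to pin down the degenerate locus over a one-dimensional base.
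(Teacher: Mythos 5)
The first thing to say is that the survey does not prove this statement at all: it is imported verbatim from \cite[Thm.5.1, Prop.1.3]{GaoHab} with no argument given, so there is no in-paper proof to match your sketch against. Judged on its own terms, your proposal correctly identifies the architecture (an elementary Betti-rank count for ``generically special $\Rightarrow$ degenerate'', functional transcendence for the converse, and a finiteness statement for the union in (ii)), and your forward direction is essentially sound: for a constant abelian subscheme the Betti coordinates of constant sections are constant, torsion sections have constant (rational) Betti coordinates, and the resulting dimension count $2\dim Z + 2\dim B < 2\dim Y$ forces degeneracy.

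The genuine gap is in the converse, which is the whole content of the theorem: your ``proof'' of it is to say that it ``is really the content of \cite[Thm.5.1]{GaoHab}'', i.e.\ you cite the statement you are supposed to prove. Nothing in your sketch explains why, over a one-dimensional base, the atypical/weakly-optimal components produced by functional transcendence must be translates of trace subvarieties by \emph{torsion} cosets rather than by arbitrary cosets or by more exotic weakly special subvarieties; that identification is the hard step and it is simply asserted. A secondary, more minor point: you reach for the mixed Ax--Schanuel theorem \cite[Thm.1.1]{GaoMixedAS} and the weakly optimal subvarieties of \cite{HabeggerPilaENS}, which is the machinery of \cite{GaoBettiRank} for a base of arbitrary dimension (the route the survey sketches for Theorems~\ref{ThmNonDegXA} and \ref{ThmDegLocusZarClosed}); the actual $1$-parameter proof in \cite{GaoHab} predates that and runs differently (Ax's theorem for the relevant abelian scheme together with a monodromy argument), so even as an outline your route is the ``general case specialized to $\dim S=1$'' rather than the cited one. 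Finally, in part (ii) you treat the intrinsic definition of $X^{\mathrm{deg}}$ as if it were ``the union of positive-dimensional atypical pieces'' and deduce the displayed equality from (i); but the survey never gives that definition, and the equality of $X^{\mathrm{deg}}$ with the union of generically special subvarieties contained in $X$ is precisely the content of \cite[Prop.1.3]{GaoHab}, not a formal consequence of (i). So the proposal is an accurate map of where the difficulties lie, but it does not close any of them.
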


\normalsize



\section{The height inequality and its application}\label{SectionHtIneqEqdistr}
This section is based on \cite{DGHUnifML}. 
Let $S$ be a quasi-projective irreducible variety and let $\pi \colon \cA \rightarrow S$ be an abelian scheme of relative dimension $g$, both over $\IQbar$.

Let $\cL$ be a relatively ample line bundle on $\cA/S$ with $[-1]^*\cL \cong \cL$, and let $\cM$ be a line bundle over a compactification $\bar{S}$ of $S$. All these data are assumed to be defined over $\IQbar$. Then we have a fiberwise N\'{e}ron--Tate height function $\hat{h}_{\cL} \colon \cA(\IQbar) \rightarrow [0,\infty)$ as in \eqref{EqFiberwiseNTHeight}, and a height function $h_{\bar{S},\cM} \colon S(\IQbar) \rightarrow \R$ provided by the Height Machine \eqref{EqHtMachine}.

\subsection{Statement of the height inequality}
For any irreducible subvariety $X$ of $\cA$, set $X^* = X \setminus X^{\mathrm{deg}}$ with $X^{\mathrm{deg}}$ the Zariski closed subset of $X$ from Theorem~\ref{ThmDegLocusZarClosed}. Then $X^* \not= \emptyset$ if and only if $X$ is non-degenerate.

Here is the height inequality of Dimitrov--Gao--Habegger from \cite{DGHUnifML}. When $\dim S = 1$ it is proved in \cite{GaoHab}.

\begin{thm}\label{ThmHtIneq}
Let $X$ be an irreducible subvariety of $\cA$ defined over $\IQbar$. Let $X^* = X \setminus X^{\mathrm{deg}}$ be the Zariski open subset of $X$ as defined above; it is defined over $\IQbar$.

There exist constants $c > 0$ and $c'$, depending only on $X$ and the data of the height functions, such that 
\begin{equation}\label{EqHtIneq}
\hat{h}_{\cL}(P) \ge c h_{\bar{S},\cM}(\pi(P)) - c' \quad \text{ for all }P \in X^*(\IQbar).
\end{equation}
\end{thm}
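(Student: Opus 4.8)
The plan is to deduce \eqref{EqHtIneq} from the positivity of the Betti form by an Arakelov-theoretic (or adelic line bundle) argument, the geometric shadow of which is the bigness of $\cL$ along the non-degenerate locus.

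\textbf{Step 1: reduction to the universal family.} First I would use the modular map $\iota\colon\cA\to\mathfrak{A}_{g,D}$ of \eqref{EqModularMap} together with Silverman's Theorem~\ref{ThmHtTriangularIneq}.(i)--(ii) and the quasi-finiteness of $\iota_S$ to replace $h_{\bar S,\cM}(\pi(P))$ by a modular height of the fibre $\cA_{\pi(P)}$, up to the usual constants. Since any two relatively ample symmetric line bundles on an abelian scheme are comparable in the relatively ample cone, their fiberwise N\'eron--Tate heights differ by a uniform multiplicative constant (if $n\cL-\mathfrak{L}_{g,D}$ and $n\mathfrak{L}_{g,D}-\cL$ are relatively ample and symmetric, then $\hat h_{\cL}\le n\hat h_{\mathfrak{L}_{g,D}}$ and conversely); hence I may assume $\cL=\mathfrak{L}_{g,D}$. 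Replacing $X$ by $\iota(X)$ — which is legitimate because $\iota|_{X^*}$ is generically finite by \eqref{EqNonDegNaive}, and because $X^{\mathrm{deg}}$ is compatible with generically finite modular correspondences — I reduce to $X\subseteq\mathfrak{A}_{g,D}$, where by Proposition~\ref{PropBettiFormChernClass} the Betti form $\omega$ of Construction~\ref{ConstrBettiForm} represents $c_1(\cL)$.

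\textbf{Step 2: non-degeneracy $\Rightarrow$ geometric bigness.} By \eqref{EqBettiRankBettiForm} and Theorem~\ref{ThmDegLocusZarClosed}, $X^{\mathrm{deg}}$ is exactly the vanishing locus of $(\omega|_X)^{\wedge\dim X}$; thus on the Zariski-dense open $X^*=X\setminus X^{\mathrm{deg}}$ the closed semipositive form $\omega|_X$ is a genuine volume form. Fixing a projective compactification $\bar X$ lying over a compactification $\bar S$ (e.g.\ over a toroidal compactification of $\mathbb{A}_{g,D}$), this exhibits $c_1(\cL)|_{\bar X}$ by a semipositive form strictly positive on $X^*$, so $\cL|_{\bar X}$ is big with augmented base locus contained in $\bar X\setminus X^*$, i.e.\ in $X^{\mathrm{deg}}$. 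Moreover, since all the eigenvalues of $\omega|_{X^*}$ are strictly positive, for a fixed ample $\mathfrak{M}$ and a small rational $\varepsilon=\varepsilon(X)>0$ the $\Q$-line bundle $\cL|_{\bar X}-\varepsilon\,\pi^*\mathfrak{M}|_{\bar X}$ remains big with augmented base locus still inside $X^{\mathrm{deg}}$ (the delicate point, taken care of on the chosen model, is that the canonical metric degenerates at the boundary, so one argues on $\bar X$ rather than pointwise).

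\textbf{Step 3: arithmetic upgrade and conclusion.} I would then equip a suitable integral model of $\mathfrak{A}_{g,D}$ (or work with the adelic line bundle formalism) so that $\cL$ carries the cubical/canonical adelic metric whose archimedean curvature is $\omega$ and whose induced height is precisely $\hat h_{\cL}$, and $\mathfrak{M}$ carries a model metric. The geometric bigness of Step 2, the semipositivity of these metrics, and the resulting positive top arithmetic self-intersection number feed into the arithmetic Hilbert--Samuel theorem / arithmetic bigness criterion, yielding that $\overline{\cL}|_{\bar X}-\varepsilon\,\pi^*\overline{\mathfrak{M}}|_{\bar X}$ is arithmetically big; hence a high power has a nonzero effective global section $s$. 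Comparing $-\tfrac1N\log\|s\|$ with the defining property of heights gives $\hat h_{\cL}(P)-\varepsilon\,h_{\bar S,\cM}(\pi(P))+c'\ge 0$ for all $P\notin\operatorname{div}(s)$, and since $\operatorname{div}(s)$ is supported in the augmented base locus, hence in $X^{\mathrm{deg}}$, this is exactly \eqref{EqHtIneq} on $X^*(\IQbar)$ with $c=\varepsilon$. (If one prefers not to control the base locus a priori, one instead runs an induction on $\dim X$: apply the conclusion off $\operatorname{div}(s)$ and the inductive hypothesis to the components of $\operatorname{div}(s)$, using the footnote of Theorem~\ref{ThmNonDegXA} to make geometric generic fibres irreducible and noting that any degenerate component lies in $X^{\mathrm{deg}}$.)

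\textbf{Main obstacle.} The genuinely hard part is Step 3: producing a model on which the canonical metric on $\cL$ extends with at worst logarithmic singularities along the boundary, verifying that the archimedean intersection numbers read off from the Betti form are strictly positive on $X^*$, and matching the resulting Arakelov base locus with the functional-transcendence locus $X^{\mathrm{deg}}$ of Theorem~\ref{ThmDegLocusZarClosed}. Over a quasi-projective, possibly high-dimensional base with bad reduction this is where the arithmetic Hilbert--Samuel machinery (and, in hindsight, adelic line bundles) is indispensable; the case $\dim S=1$ of \cite{GaoHab} already contains the analytic kernel of the estimate — bounding $\hat h_{\cL}(P)$ below by the logarithmic distance of $\pi(P)$ to the boundary divisor of $\bar S$ — and serves as the blueprint.
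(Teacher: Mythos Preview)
The paper does not actually prove Theorem~\ref{ThmHtIneq}: it only records that the stated version is a minor sharpening of \cite[Thm.~1.6 and Thm.~B.1]{DGHUnifML}, obtained from \cite[Thm.~B.1]{DGHUnifML} (which gives \eqref{EqHtIneq} on \emph{some} Zariski open dense $U\subseteq X$) by a straightforward Noetherian induction on the components of $X\setminus U$, using that $X^{\mathrm{deg}}$ is Zariski closed and absorbs every degenerate component. Your parenthetical at the end of Step~3 is precisely this argument, so in that narrow sense you have recovered the paper's own contribution.

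The rest of your sketch aims instead at the underlying result \cite[Thm.~B.1]{DGHUnifML}, and here the strategy---non-degeneracy $\Rightarrow$ strict positivity of $(\omega|_X)^{\wedge\dim X}$ $\Rightarrow$ bigness of $\cL|_{\bar X}$ $\Rightarrow$ an arithmetic Siu/Hilbert--Samuel inequality $\Rightarrow$ height inequality off a divisor---is indeed the blueprint of \cite{DGHUnifML}. Two points, however, are not in order as written. First, in Step~1 you invoke ``the quasi-finiteness of $\iota_S$'', but nothing in the setup of \S\ref{SectionHtIneqEqdistr} forces $\iota_S\colon S\to\mathbb{A}_{g,D}$ to be quasi-finite; isotrivial directions in $S$ are allowed, and then $h_{\bar S,\cM}$ is genuinely not controlled by the modular height of the fibre. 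The actual proof in \cite{DGHUnifML} does not reduce to the universal family in this way; it works directly with a compactification of $X$ and compares $\hat h_{\cL}$ to a Weil height on that model via Silverman--Tate, so that the base height $h_{\bar S,\cM}$ enters through the pullback $\pi^*\cM$ rather than through $\iota_S$. Second, you correctly flag as the ``Main obstacle'' the degeneration of the canonical metric at the boundary, but the sketch does not indicate how to overcome it; in \cite{DGHUnifML} this is handled by an explicit growth estimate for the Betti form near the boundary (a semipositivity statement with logarithmic singularities), which is where most of the work lies and which is not a formality. So: right architecture, but Step~1 as stated is incorrect, and Step~3 remains a statement of the difficulty rather than its resolution.
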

This theorem is non-trivial only if $X$ is non-degenerate (otherwise $X^*= \emptyset$). The version stated here is a minor improvement of \cite[Thm.1.6 and Thm.B.1]{DGHUnifML}. It follows from a simple Noetherian induction from \cite[Thm.B.1]{DGHUnifML}, the Zariski closedness of $X^{\mathrm{deg}}$ and the geometric description of $X^{\mathrm{deg}}$; \textit{cf.} \cite[Thm.4.4.2]{GaoHDR}. Another minor improvement is that $\cM$ is not required to be ample on $\bar{S}$ as in \cite{DGHUnifML}, as this extra requirement can easily be dropped by the Height Machine.

In practice, to apply Theorem~\ref{ThmHtIneq}, one needs to have some non-degenerate subvarieties to start with. For this purpose, apart from directly applying the criterion of non-degeneracy \cite[Thm.1.1]{GaoBettiRank}, the construction in Theorem~\ref{ThmNonDegXA} is a useful tool.

We point out that the constants in \eqref{EqHtIneq} are effective; see \cite[Rmk.5.1]{DGHUnifML} for comments on $c$ (which is denoted by $c_1'$ in \textit{loc.cit.}).

\subsection{Application to the New Gap Principle}

In this subsection, we use Theorem~\ref{ThmHtIneq} and Theorem~\ref{ThmNonDegXA} to prove a proposition in the flavor of the New Gap Principle Theorem~\ref{ThmNGP}. In fact, applying the proposition to an appropriate family yields \cite[Prop.7.1]{DGHUnifML}, which is a weaker version of the New Gap Principle; see the end of $\mathsection$\ref{SubsectionProofNGP}.


This proof, in line with \cite[Prop.7.1]{DGHUnifML}, is a good example for how the height inequality Theorem~\ref{ThmHtIneq} is applied to Diophantine problems. Moreover, the framework of the proof will also be used in Proposition~\ref{PropUnifBog} (Step~4) and Lemma~\ref{LemmaRelBogUnifBog}.

We also render the constants from \cite[Prop.7.1]{DGHUnifML} more explicit, by applying the refined height inequality and by making Lemma~\ref{LemmaNogaAlon} (which is \cite[Lem.6.3]{DGHUnifML}) explicit.

Let $\cA/S$, $\cL$ and $\cM$ be as the beginning of this section. Write $\iota\colon \cA \rightarrow \mathfrak{A}_g$ for the modular map \eqref{EqModularMap2}.

\begin{prop}\label{PropNGPLargeCurve}
Let $\mathfrak{C} \subseteq \cA$ be an irreducible subvariety satisfying the following properties. 
Each fiber $\mathfrak{C}_s$ of $\mathfrak{C} \rightarrow S$ is an irreducible curve which generates $\cA_s$ and is not a translate of an elliptic curve, and $\iota|_{\mathfrak{C}\times_S S'}$ is generically finite for all subvarieties $S' \subseteq S$.

Then there exist constants $c_1',c_2'$ and $c_3'$ such that for each $s \in S(\IQbar)$, we have
\begin{equation}
\#\left\{x \in \mathfrak{C}_s(\IQbar) : \hat{h}_{\cL}(x) \le c_1' \max\{1, h_{\bar{S},\cM}(s)\} - c_3' \right\} < c_2'.
\end{equation}
\end{prop}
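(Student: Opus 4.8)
The plan is to apply the height inequality (Theorem~\ref{ThmHtIneq}) to a suitable power of the Faltings--Zhang construction, and then convert the resulting height lower bound into a counting bound via a combinatorial pigeonhole argument in the spirit of de Diego. Fix $M = \dim S + 1$ (or more safely $M \ge \dim \mathfrak{C}$, to be able to invoke part (ii) of Theorem~\ref{ThmNonDegXA}). Consider the $(M+1)$-st fibered power $\mathfrak{C}^{[M+1]} \subseteq \cA^{[M+1]}$ and the Faltings--Zhang morphism $\mathscr{D}_M^{\cA} \colon \cA^{[M+1]} \rightarrow \cA^{[M]}$, $(P_0, P_1, \ldots, P_M) \mapsto (P_1 - P_0, \ldots, P_M - P_0)$. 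The hypotheses on $\mathfrak{C}$ — each fiber is an irreducible curve generating $\cA_s$, not a translate of an elliptic curve, and $\iota|_{\mathfrak{C}\times_S S'}$ generically finite for all $S' \subseteq S$ — are exactly what is needed to check conditions (a), (b), (c) of Theorem~\ref{ThmNonDegXA} on the geometric generic fiber (a curve generating $\cA_{\bar\eta}$ has finite stabilizer and lies in no proper subgroup), and to get the required generic finiteness of $\iota^{[M]}$ restricted to the image. Hence $Y := \mathscr{D}_M^{\cA}(\mathfrak{C}^{[M+1]})$ is a non-degenerate subvariety of $\cA^{[M]}$.

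\textbf{Applying the height inequality.} By Theorem~\ref{ThmHtIneq} applied to $Y \subseteq \cA^{[M]}$ (equipped with the natural relatively ample line bundle $\cL^{[M]} := p_1^*\cL \otimes \cdots \otimes p_M^*\cL$ and the same $\cM$ on $\bar S$), there are constants $c > 0$ and $c'$ such that
\begin{equation}
\sum_{i=1}^M \hat{h}_{\cL}(P_i - P_0) \ge c\, h_{\bar{S},\cM}(s) - c'
\end{equation}
for all $(P_1 - P_0, \ldots, P_M - P_0) \in Y^*(\IQbar)$ lying over $s \in S(\IQbar)$. The subtlety is that $Y^*$ is only a dense open of $Y$, so one must control the bad locus: I would argue by Noetherian induction on $\dim Y$, exactly as in the reduction of Theorem~\ref{ThmHtIneq} itself, peeling off $Y \setminus Y^*$ (whose fibers over $S$ have strictly smaller dimension or whose image in $\cA^{[M]}$ is again built from curves satisfying the hypotheses) and applying the inductive hypothesis there. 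This lets one assume the inequality holds for all but finitely many — in a fiberwise-uniform sense — tuples $(P_0, \ldots, P_M)$ of points of $\mathfrak{C}_s$.

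\textbf{The counting step, and the main obstacle.} Now suppose for contradiction that for some $s$ the set $\Sigma_s := \{x \in \mathfrak{C}_s(\IQbar) : \hat{h}_{\cL}(x) \le c_1' \max\{1, h_{\bar S, \cM}(s)\} - c_3'\}$ is large, say $\#\Sigma_s \ge c_2'$. Using that $\hat h_{\cL}$ restricted to a fiber is a positive semi-definite quadratic form and the triangle/parallelogram estimate $\hat h_{\cL}(P_i - P_0) \le 2\hat h_{\cL}(P_i) + 2\hat h_{\cL}(P_0) \le$ (something like) $8c_1'\max\{1,h_{\bar S,\cM}(s)\}$ for $P_i, P_0 \in \Sigma_s$, the left side of the displayed inequality is at most $8Mc_1'\max\{1, h_{\bar S,\cM}(s)\}$ whenever we can choose the $M+1$ points $P_0, \ldots, P_M$ inside $\Sigma_s$ \emph{and} landing in $Y^*$. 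Choosing $c_1'$ small enough relative to $c$ (say $c_1' < c/(16M)$) makes this contradict $c\, h_{\bar S, \cM}(s) - c'$ once $h_{\bar S, \cM}(s)$ is large, and the $-c_3'$ absorbs the bounded-$h_{\bar S,\cM}(s)$ range; absorbing $c'$ into $c_3'$ handles the rest. \textbf{The main obstacle is exactly the tuple-selection:} one must guarantee that among $\#\Sigma_s$ points one can pick $M+1$ of them whose image under $\mathscr{D}_M^{\cA}$ avoids the degeneracy locus $Y \setminus Y^* = Y^{\mathrm{deg}}$. This is where Lemma~\ref{LemmaNogaAlon} (the cited \cite[Lem.6.3]{DGHUnifML}, a Noga Alon--type combinatorial lemma) enters: it quantifies that if $\#\Sigma_s$ exceeds a constant depending only on $M$ and the degrees involved, then such a "generic" $(M+1)$-tuple exists, because $Y^{\mathrm{deg}}$ has bounded complexity and cannot contain the images of too many tuples drawn from a single fiber of a curve. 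Making this lemma explicit — tracking how $c_2'$ depends on $\deg \mathfrak{C}$, $M$, and $g$ via the (Arithmetic) Bézout Theorem — is the promised quantitative refinement, and is the only genuinely delicate bookkeeping in the argument. The resulting $c_1', c_2', c_3'$ depend only on $\mathfrak{C}$ and the height data, as claimed.
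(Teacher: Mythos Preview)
Your overall architecture is right --- non-degeneracy, height inequality, induction on the base, then the combinatorial Lemma~\ref{LemmaNogaAlon} --- but you take an unnecessary detour through the Faltings--Zhang image, and that detour has a real gap.

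The paper applies Theorem~\ref{ThmNonDegXA}(i) to $X=\mathfrak{C}^{[M]}$ with $M\ge\dim S$, not part~(ii) to $Y=\mathscr{D}_M^{\cA}(\mathfrak{C}^{[M+1]})$. The hypothesis ``$\iota|_{\mathfrak{C}\times_S S'}$ generically finite for all $S'\subseteq S$'' is precisely what makes $\iota^{[M]}|_{\mathfrak{C}^{[M]}}$ generically finite (and survives the induction on $\dim S$); it does \emph{not} in general yield generic finiteness of $\iota^{[M]}$ on the Faltings--Zhang image. Concretely, take $S=T$ a curve, $\cA=A\times T\to T$ a constant abelian scheme of dimension $g\ge 2$, a fixed curve $C\subseteq A$ generating $A$ and not a translate of an elliptic curve, a nonconstant morphism $a\colon T\to A$, and set $\mathfrak{C}_t=C+a(t)$. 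All hypotheses of the Proposition hold (in particular $\iota|_{\mathfrak{C}}$ is generically finite since for generic $Q\in A$ only finitely many $t$ satisfy $Q-a(t)\in C$), and $\mathfrak{C}^{[M]}$ is non-degenerate. But the fibered differences cancel the translation, so $\mathscr{D}_M^{\cA}(\mathfrak{C}^{[M+1]})=V\times T$ for a fixed $V\subseteq A^M$; the Betti map on $A^M\times T$ factors through $A^M$, so this product is \emph{degenerate} and your height inequality on $Y$ is vacuous. Thus the step ``and to get the required generic finiteness of $\iota^{[M]}$ restricted to the image'' fails as written.

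The fix is exactly the paper's route: work with $X=\mathfrak{C}^{[M]}$ directly. Then the height inequality reads $\sum_{i=1}^M\hat h_{\cL}(x_i)\ge c\,h_{\bar S,\cM}(s)-c'$ on $X^*$, with no differences and no parallelogram inequality needed. After inducting on $\dim S$ to ensure $X^{\mathrm{deg}}_s\neq\mathfrak{C}_s^M$ for every $s$, Lemma~\ref{LemmaNogaAlon} applied to $C=\mathfrak{C}_s$ and $Z=X^{\mathrm{deg}}_s$ gives the tuple-selection you wanted, and one finishes exactly as you describe with $c_1'=c/2M$, $c_3'=(c+c')/M$.
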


\begin{proof}
We prove this proposition by induction on $\dim S$. The proof for the base step $\dim S =0$ is contained in the induction step.

Fix $M \ge \dim S$. The properties of $\mathfrak{C}$ allows us to apply Theorem~\ref{ThmNonDegXA}(i) applied to $\mathfrak{C} \subseteq \cA \rightarrow S$. So $\mathfrak{C}^{[M]}$ is a non-degenerate subvariety of $\cA^{[M]}$. Set $X: = \mathfrak{C}^{[M]}$. Let $X^{\mathrm{deg}}$ be the degeneracy locus of $X$ from Theorem~\ref{ThmDegLocusZarClosed}; it is Zariski closed in $X$ and is defined over $\IQbar$. Moreover $X^{\mathrm{deg}} \not= X$ as $X$ is non-degenerate.

Let $X^* = X\setminus X^{\mathrm{deg}}$; it is Zariski open dense in $X$. Applying the height inequality, Theorem~\ref{ThmHtIneq}, to $X$ and $\cA \rightarrow S$, we get
\begin{equation}\label{EqHtIneqApp}
\hat{h}_{\cL}(x_1) + \cdots + \hat{h}_{\cL}(x_M) \ge c h_{\bar{S},\cM}(s) - c'
\end{equation}
for all $s \in S(\IQbar)$ and $(x_1, \ldots, x_M) \in X^*(\IQbar)$ in the fiber above $s$. 

As $X^*$ is Zariski open dense in $X$, each irreducible component of $\bar{S \setminus \pi(X^*)}$ has dimension $\le \dim S - 1$. By induction hypothesis, it suffices to prove the proposition with $S$ replaced by $S \setminus \bar{S \setminus \pi(X^*)}$. Therefore we may and do assume $\pi(X^*) = S$. Thus for each $s \in S(\IQbar)$, the fiber of $X^*$ over $s$ is non-empty.

Use $X_s$, $X^*_s$ and $X^{\mathrm{deg}}_s$ to denote the corresponding fibers over $s$. Then the last sentence of the previous paragraph says $X^*_s \not= \emptyset$ for each $s \in S(\IQbar)$. Equivalently,
\begin{equation}\label{EqNGPDGHHyp}
X^{\mathrm{deg}}_s  \not= X_s = \mathfrak{C}_s^M.
\end{equation}

This allows us to apply Lemma~\ref{LemmaNogaAlon} to $V = \cA_s$, $L = \cL_s$, $C = \mathfrak{C}_s$ and $Z = X^{\mathrm{deg}}_s$. Thus setting
\begin{equation}
c_2' := \max_{s \in S(\IQbar)}\deg_{\cL_s}(\mathfrak{C}_s)^{M(M+1)/2}\deg_{\cL_s}(\cA_s)^{M(M-1)/2}\deg_{\cL_s^{\boxtimes M}}(X^{\mathrm{deg}}_s) + 1,
\end{equation}
the following holds true. \footnote{In a flat family, all fibers have the same degree. Thus $c_2'$ exists, possibly by a Noetherian induction.}  If a subset $\Sigma \subseteq \mathfrak{C}_s(\IQbar)$ has cardinality $\ge c_2'$, then $\Sigma^M \not\subseteq X^{\mathrm{deg}}_s$.
 
We work with $\Sigma =  \{x \in \mathfrak{C}_s(\IQbar) : \hat{h}_{\cL}(x) \le c_1'\max\{1,h_{\bar{S},\cM}(s)\} - c_3'\}$, with
\begin{equation}
c_1' = c/2M \quad \text{ and } \quad c_3' = (c+c')/M,
\end{equation} 
where $c$ and $c'$ come from the height inequality \eqref{EqHtIneqApp}.

We claim that $\#\Sigma < c_2'$. Assume otherwise, then $\Sigma^M \not\subseteq X^{\mathrm{deg}}_s$, and thus there exist $x_1,\ldots,x_M \in \Sigma$ such that $(x_1, \ldots, x_M) \not\in X^{\mathrm{deg}}_s$. Hence \eqref{EqHtIneqApp} holds true, and we thus obtain
\[
c h_{\bar{S},\cM}(s) - c' \le M c_1'\max\{1,h_{\bar{S},\cM}(s)\} - M c_3' = \frac{1}{2}c \max\{1,h_{\bar{S},\cM}(s)\} - (c + c').
\]
As $c \max\{1,h_{\bar{S},\cM}(s)\} \le c (1+h_{\bar{S},\cM}(s))$, the inequality above implies
\[
c \max\{1,h_{\bar{S},\cM}(s)\} - c - c' \le \frac{1}{2}c \max\{1,h_{\bar{S},\cM}(s)\} - (c + c').
\]
But this last inequality cannot hold true. So we get a contradiction, and hence $\#\Sigma < c_2'$. This is precisely the desired bound, and hence we are done.
\end{proof}

The following lemma as well as the proof presented here is nothing but \cite[Lem.6.3]{DGHUnifML}, with the bound written explicitly. Let $k$ be an algebraically closed field and all varieties are assumed to be defined over $k$. Let $M \ge 1$ be an integer.
\begin{lemma}\label{LemmaNogaAlon}
Let $V$ be a projective irreducible variety with an ample line bundle $L$. Let $C$ be an irreducible curve in $V$ and let $Z$ be a Zariski closed subset of $V^M$. Assume $C^M \not\subseteq Z$. Then if $\Sigma \subseteq C(k)$ has cardinality $> \deg_L(C)^{M(M+1)/2}\deg_L(V)^{M(M-1)/2}\deg_{L^{\boxtimes M}}(Z)$, then $\Sigma^M \not\subseteq Z(k)$.
\end{lemma}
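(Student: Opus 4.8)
The plan is to prove the contrapositive by induction on $M$: assuming $\Sigma\subseteq C(k)$ satisfies $\Sigma^M\subseteq Z(k)$, one shows $\#\Sigma\le\deg_L(C)^{M(M+1)/2}\deg_L(V)^{M(M-1)/2}\deg_{L^{\boxtimes M}}(Z)$. First I would observe that $\Sigma$ must be finite: an infinite subset of the irreducible curve $C$ is Zariski dense, so $\Sigma^M$ would be Zariski dense in $C^M$ (the product of irreducible varieties over the algebraically closed field $k$ is irreducible), and since $Z$ is closed this would force $C^M\subseteq Z$, against the hypothesis. For the base case $M=1$, the hypothesis $C\not\subseteq Z$ makes $C\cap Z$ finite, and the B\'ezout inequality for a curve meeting a closed subvariety gives $\#(C\cap Z)\le\deg_L(C)\deg_L(Z)$; since $\Sigma\subseteq C\cap Z$ and $L^{\boxtimes1}=L$, this is the asserted bound.

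For the inductive step ($M\ge2$) I would work with the first projection $p\colon V^M\to V$ and, for $x\in V(k)$, the closed subset $Z_x:=\{(x_2,\dots,x_M)\in V^{M-1}:(x,x_2,\dots,x_M)\in Z\}$ of $V^{M-1}$. Write $\Sigma=\Sigma_1\sqcup\Sigma_2$ with $\Sigma_1=\{x\in\Sigma:C^{M-1}\subseteq Z_x\}$, and split into two cases. If $\Sigma_2\ne\emptyset$, fix $x_0\in\Sigma_2$; then $C^{M-1}\not\subseteq Z_{x_0}$, whereas $x_0\in\Sigma$ gives $\Sigma^{M-1}\subseteq Z_{x_0}$, so the inductive hypothesis applied to $Z_{x_0}\subseteq V^{M-1}$ bounds all of $\#\Sigma$ in terms of $\deg_{L^{\boxtimes(M-1)}}(Z_{x_0})$. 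If $\Sigma_2=\emptyset$, then $\Sigma\subseteq C_0:=\{x\in C:C^{M-1}\subseteq Z_x\}$; the set $C_0$ is an intersection of closed conditions over $C^{M-1}$, hence Zariski closed in $C$, and proper --- otherwise $C^M=\bigcup_{x\in C}(\{x\}\times C^{M-1})\subseteq Z$ --- so $C_0$ is finite. To bound $\#C_0$ one picks $(y_2,\dots,y_M)\in C^{M-1}$ for which the slice $\{x\in V:(x,y_2,\dots,y_M)\in Z\}$ does not contain $C$ (such a choice exists, again since otherwise $C^M\subseteq Z$), and then $C_0$ lies in the finite set $C\cap\{x\in V:(x,y_2,\dots,y_M)\in Z\}$, whose size B\'ezout controls.

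It remains to make these two estimates numerical and verify they produce exactly the exponents $M(M+1)/2$ and $M(M-1)/2$; note that the two cases are combined by a maximum rather than a sum, since in each case one bounds the whole of $\#\Sigma$. The quantitative inputs are B\'ezout-type bounds for the degrees of the fibers and slices above, of the form $\deg_{L^{\boxtimes(M-1)}}\!\big(Z\cap(\{x\}\times V^{M-1})\big)\le\deg_L(V)^{\,?}\,\deg_{L^{\boxtimes M}}(Z)$ and similarly for slices over points of $C^{M-1}$; this is where the powers of $\deg_L(V)$ in the statement are generated, the reason being that isolating a point of $V$ requires intersecting with $\dim V$ ample divisors. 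I expect the degree bookkeeping through these successive fibrations to be the only real obstacle: one has to invoke a sufficiently sharp form of the (refined or arithmetic) B\'ezout inequality --- of the kind already used in the proof of Theorem~\ref{ThmLargePointsFamily} --- so that no uncontrolled constants enter and the exponents telescope correctly via $1+2+\dots+M$ and $0+1+\dots+(M-1)$; the rest is formal.
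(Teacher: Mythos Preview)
Your inductive scheme is sound and close in spirit to the paper's, but the decomposition is genuinely different. The paper does \emph{not} split $\Sigma$; instead it first replaces $Z$ by $C^M\cap Z$ (B\'ezout gives $\sum_Y\deg_{L^{\boxtimes M}}(Y)\le\deg_L(C)^M\deg_{L^{\boxtimes M}}(Z)$ over the components $Y$), and then partitions the components of $C^M\cap Z$ into $Z'$ (those whose first projection has dimension $\ge 1$) and $Z''$ (those projecting to a point). For $Z'$ every fiber over a point of $C$ has dimension $\le M-2$, so the induction hypothesis applies to each slice $\{P\}\times V^{M-1}$; for $Z''$ the first projection has image of cardinality $\le\deg_{L^{\boxtimes M}}(Z'')$, so if $\#\Sigma$ is large one finds $P\in\Sigma\setminus q(Z'')$ and a point of $\{P\}\times\Sigma^{M-1}$ outside $Z'$, hence outside $Z$. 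The extra factor $\deg_L(C)^M$ incurred at the outset is exactly what produces the telescoping $1+2+\cdots+M$ in the exponent of $\deg_L(C)$.

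Your variant---splitting $\Sigma$ by whether $C^{M-1}\subseteq Z_x$---avoids intersecting with $C^M$ up front and instead takes a maximum over two cases. This is perfectly legitimate and arguably cleaner conceptually; Case~1 even yields a smaller exponent on $\deg_L(C)$ than the paper's bound. The only point you leave open is the explicit B\'ezout bound for the slice degrees $\deg_{L^{\boxtimes(M-1)}}(Z_{x_0})$ and $\deg_L(Z^{(y)})$: the paper uses $\deg_{L^{\boxtimes M}}\!\big(Z\cap(\{P\}\times V^{M-1})\big)\le\deg_{L^{\boxtimes M}}(Z)\cdot\deg_L(V)^{M-1}$ (intersecting with a codimension-$\dim V$ linear slice in one factor), and with this input your Case~1 telescopes to $\deg_L(C)^{M(M-1)/2}\deg_L(V)^{M(M-1)/2}\deg_{L^{\boxtimes M}}(Z)$, well within the stated bound. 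For Case~2 one should be a little careful: slicing at an $(M-1)$-tuple of points at once could overshoot the $\deg_L(V)$ exponent; a cleaner route is to observe that each $\{c\}\times C^{M-1}$ for $c\in C_0$ is an irreducible component of $C^M\cap Z$, whence $\#C_0$ is bounded by the number of components of $C^M\cap Z$, i.e.\ by $\deg_L(C)^M\deg_{L^{\boxtimes M}}(Z)$. Either way the bookkeeping closes, so there is no genuine gap---only the explicit verification you flagged.
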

\small
\begin{proof}
We prove this lemma by induction on $M$. The base step $M=1$ follows immediately from B\'{e}zout's Theorem.

Assume the lemma is proved for $1, \ldots, M-1 \ge 1$. Let $q \colon V^M \rightarrow V$ be the projection to the first factor.

B\'{e}zout's Theorem implies $\sum_{Y} \deg_{L^{\boxtimes M}}(Y) \le \deg_L(C)^M\deg_{L^{\boxtimes M}}(Z)$ with $Y$ running over all irreducible components of $C^M \cap Z$. Let $Z'$ be the union of such $Y$'s with $\dim q(Y) \ge 1$, and $Z''$ be the union of the other components. Then $\deg_{L^{\boxtimes M}}(Z'), \deg_{L^{\boxtimes M}}(Z'') \le \sum_{Y} \deg_{L^{\boxtimes M}}(Y)$ and hence 
\begin{equation}\label{EqBezout}
\deg_{L^{\boxtimes M}}(Z') \le \deg_L(C)^M\deg_{L^{\boxtimes M}}(Z), \quad \deg_{L^{\boxtimes M}}(Z'') \le \deg_L(C)^M\deg_{L^{\boxtimes M}}(Z)
\end{equation}

Note that $q(Z') \subseteq q(C^M \cap Z) \subseteq C$. For all $P \in C(k)$, the fiber $q|_{Z'}^{-1}(P) = Z' \cap (\{P\}\times V^{M-1})$ has dimension at most $\dim Z' - 1 \le M-2$. So $\{P\}\times C^{M-1} \not\subseteq Z' \cap (\{P\}\times V^{M-1})$. Write $i \colon \{P\}\times V^{M-1} \cong V^{M-1}$ for the natural isomorphism. Then we can apply the induction hypothesis and conclude: if $\Sigma \subseteq C(k)$ has cardinality $> \deg_L(C)^{M(M-1)/2}\deg_L(V)^{(M-1)(M-2)/2}\deg_{L^{\boxtimes (M-1)}}i(Z' \cap (\{P\}\times V^{M-1}))$, then $\Sigma^{M-1} \not\subseteq i(Z' \cap (\{P\}\times V^{M-1}))(k)$.

But $\deg_{L^{\boxtimes (M-1)}}i(Z' \cap (\{P\}\times V^{M-1})) = \deg_{L^{\boxtimes M}}(Z' \cap (\{P\}\times V^{M-1}))$ (since $\deg_L(P) = 1$) and $\deg_{L^{\boxtimes M}}(Z' \cap (\{P\}\times V^{M-1})) \le \deg_{L^{\boxtimes M}}(Z') \deg_L(V)^{M-1}$ by B\'{e}zout's Theorem. So we can replace $\deg_{L^{\boxtimes (M-1)}}i(Z' \cap (\{P\}\times V^{M-1}))$ in the conclusion of last paragraph by 
$\deg_{L^{\boxtimes M}}(Z') \deg_L(V)^{M-1}$. Thus by \eqref{EqBezout} we get: if $\Sigma \subseteq C(k)$ satisfies
\[
\#\Sigma > \deg_L(C)^{M(M-1)/2}\deg_L(V)^{(M-1)(M-2)/2} \cdot \deg_L(C)^M\deg_{L^{\boxtimes M}}(Z) \deg_L(V)^{M-1},
\]
then $\{P\} \times \Sigma^{M-1} \not\subseteq Z'(k)$ for all $P \in C(k)$ (and hence $\Sigma^M \not\subseteq Z'(k)$). Notice that the right hand side is precisely $\deg_L(C)^{M(M+1)/2}\deg_L(V)^{M(M-1)/2}\deg_{L^{\boxtimes M}}(Z)$.

Now $\dim q(Z'') = 0$, so $q(Z'')$ is a finite set of cardinality at most the number of irreducible components of $Z''$, which is at most $\deg_{L^{\boxtimes M}}(Z'')$ by definition of the degree. Hence $\#q(Z'') \le \deg_L(C)^M\deg_{L^{\boxtimes M}}(Z)$ by \eqref{EqBezout}. So if $\Sigma \subseteq C(k)$ has cardinality $>\deg_L(C)^M\deg_{L^{\boxtimes M}}(Z)$, then $\Sigma^M \not\subseteq Z''(k)$.

Thus the lemma holds true since $Z = Z' \cup Z''$.
\end{proof}

\normalsize

\section{Equidistribution on non-degenerate subvarieties and its application}\label{SectionEqdistr}

This section is based on \cite{KuehneUnifMM}. 
Let $S$ be a quasi-projective irreducible variety and let $\pi \colon \cA \rightarrow S$ be an abelian scheme of relative dimension $g$, both over $\IQbar$.

Let $\cL$ be a relatively ample line bundle on $\cA/S$ defined over $\IQbar$ such that $[-1]^*\cL \cong \cL$. Then we have a fiberwise N\'{e}ron--Tate height function $\hat{h}_{\cL} \colon \cA(\IQbar) \rightarrow [0,\infty)$ as in \eqref{EqFiberwiseNTHeight}.

\subsection{The equidistribution result}
Let $\omega$ be the Betti form on $\cA$ as provided by Construction~\ref{ConstrBettiForm}.


The following equidistribution result is proved by K\"{u}hne \cite[Thm.1]{KuehneUnifMM}.
\begin{thm}\label{ThmEquidistr}
Let $X$ be a non-degenerate subvariety of $\cA$ defined over $\IQbar$. There exists a constant $k = k(X, \omega) > 0$ such that the following property holds true. For any generic sequence $\{x_n\}_{n \in \mathbb{N}}$ in $X$ (namely $x_n$ converges to the generic point of $X$) satisfying $\hat{h}_{\cL}(x_n) \rightarrow 0$, we have
\begin{equation}\label{EqEquidistr}
\frac{1}{\#O(x_n)} \sum_{y \in O(x_n)} f(y) \rightarrow k \int_{X^{\mathrm{an}}} f (\omega|_X)^{\wedge \dim X}
\end{equation}
for all $f \in \mathscr{C}^0_{\mathrm{c}}(X^{\mathrm{an}})$ (continuous compactly supported in $X(\C)$). Here $O(x_n)$ means the Galois orbit of $x_n$.
\end{thm}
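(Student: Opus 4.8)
The plan is to follow the variational method for arithmetic equidistribution, originating in the work of Szpiro--Ullmo--Zhang \cite{Ullmo, ZhangEquidist} and refined by Yuan, with two genuinely new features forced by the present setting: the abelian scheme $\cA \to S$ is only quasi-projective, and the adelic metric on $\cL$ produced by the fiberwise canonical heights is \emph{degenerate} on $X$, in the sense that its top arithmetic self-intersection number vanishes. I will phrase the plan so that it applies either via the theory of adelic line bundles on quasi-projective varieties of Yuan--Zhang \cite{YuanZhangEqui} or via K\"{u}hne's direct compactification argument in \cite{KuehneUnifMM}.

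First some reductions and set-up. By density of $\mathscr{C}^\infty_{\mathrm{c}}(X^{\mathrm{an}})$ in $\mathscr{C}^0_{\mathrm{c}}(X^{\mathrm{an}})$ it suffices to prove \eqref{EqEquidistr} for smooth $f$, and replacing $f$ by $-f$ reduces us further to the corresponding lower bound ``$\ge$''. Equip $\cL$ with its fiberwise canonical (cubical) metric: this is a smooth adelic metric whose associated height function on points is exactly $\hat{h}_{\cL}$, and whose archimedean curvature form is, up to a positive normalising constant, the Betti form $\omega$ --- this is Proposition~\ref{PropBettiFormChernClass}, made metric-theoretic as in \cite{Mok11Form}. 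Write $\overline{\cL}$ for the resulting metrised line bundle. Choose a projective compactification $\overline{\cA}$ of $\cA$, let $\bar X$ be the Zariski closure of $X$ in $\overline{\cA}$, and extend $\overline{\cL}|_X$ to an integrable adelic line bundle $\overline{N}$ on $\bar X$; the key structural facts are that the fiberwise canonical metric is a uniform limit of semipositive model metrics, so $\overline{N}$ is nef, and that $\omega|_{X^{\mathrm{an}}}$ has finite mass, so that $\int_{X^{\mathrm{an}}}(\omega|_X)^{\wedge \dim X}$ is a well-defined number, positive precisely because $X$ is non-degenerate (Definition~\ref{DefnNonDeg}, via \eqref{EqBettiRankBettiForm}).

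The heart of the argument is the arithmetic positivity input. Since the given generic sequence $\{x_n\}$ has $\hat{h}_{\cL}(x_n) \to 0$ and is Zariski dense in $X$, the essential minimum of $\overline{N}$ is $0$, hence $(\overline{N}^{\,\dim X + 1}) = 0$: the metrised bundle itself is not arithmetically big, and Yuan's equidistribution theorem cannot be quoted directly. Instead one perturbs: for $f$ smooth and $t \in \R$ small, set $\overline{N}(tf) := \overline{N} \otimes \overline{\mathcal{O}}_{\bar X}(tf)$, the twist by the trivial bundle carrying the archimedean metric $e^{-tf}$. On the one hand, for each $n$,
\[
 h_{\overline{N}(tf)}(x_n) - h_{\overline{N}}(x_n) \;=\; \frac{t}{\#O(x_n)}\sum_{y \in O(x_n)} f(y),
\]
and since $h_{\overline{N}}(x_n) = \hat{h}_{\cL}(x_n) \to 0$ this shows $\liminf_n h_{\overline{N}(tf)}(x_n) = t\,\liminf_n \frac{1}{\#O(x_n)}\sum_{y} f(y)$ for $t > 0$. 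On the other hand, genericity gives $\liminf_n h_{\overline{N}(tf)}(x_n) \ge \mathfrak{e}(\overline{N}(tf))$, Zhang's inequality bounds $\mathfrak{e}(\overline{N}(tf))$ below by $(\dim X + 1)^{-1}\deg(X)^{-1}(\overline{N}(tf)^{\dim X + 1})$, and multilinearity of the arithmetic intersection expands $(\overline{N}(tf)^{\dim X + 1}) = t\,(\dim X + 1)\int_{X^{\mathrm{an}}} f\,(\omega|_X)^{\wedge \dim X} + O(t^2)$, the linear term being governed by the curvature $\omega$ of $\overline{N}$. Dividing by $t$, letting $t \to 0^+$, and normalising yields ``$\ge$'' with $k$ taken to be $\bigl(\int_{X^{\mathrm{an}}}(\omega|_X)^{\wedge \dim X}\bigr)^{-1} > 0$; applying the same to $-f$ gives the reverse bound, hence the convergence. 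It is in keeping $\{x_n\}$ and these intersection numbers under control near the boundary of $S$ that the height inequality Theorem~\ref{ThmHtIneq} enters --- it forces $\pi(x_n)$ to have bounded height on $\overline{S}$ once $x_n$ lies outside $X^{\mathrm{deg}}$ (Theorem~\ref{ThmDegLocusZarClosed}), which is what makes the adelic intersection theory over the non-proper base behave well; this is the implicit use of ingredient (ii) of \S\ref{SubsectionKeyNewIngre} inside the proof.

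The main obstacle is exactly the confluence of these two difficulties: setting up an arithmetic intersection theory and a theory of Chambert--Loir measures valid on $\bar X$ despite $\cA \to S$ being only quasi-projective (accomplished in \cite{YuanZhangEqui} via adelic line bundles on quasi-projective varieties, and in \cite{KuehneUnifMM} by a careful compactification together with the height inequality), and then running the variational bookkeeping for a \emph{degenerate} metrised bundle, where $(\overline{N}^{\,\dim X + 1}) = 0$ and the entire content of the statement is extracted from the first-order perturbation and from the geometric positivity $\int_{X^{\mathrm{an}}}(\omega|_X)^{\wedge \dim X} > 0$ supplied by non-degeneracy. By contrast, once one is over a proper base with a semipositive curvature form of positive top mass, the remaining steps are the classical Szpiro--Ullmo--Zhang template.
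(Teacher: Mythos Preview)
The paper does not prove this theorem; it is quoted as \cite[Thm.1]{KuehneUnifMM}, with the later and more general \cite[Thm.6.5]{YuanZhangEqui} also noted. So there is no ``paper's own proof'' to compare against. Your outline is a reasonable high-level sketch of how those references proceed, and you correctly isolate the two genuine difficulties --- the quasi-projectivity of $S$ and the degeneracy of the canonical metric on $X$ --- as well as the role of the height inequality (Theorem~\ref{ThmHtIneq}) in controlling the boundary, which is indeed how K\"{u}hne uses ingredient~(ii) of \S\ref{SubsectionKeyNewIngre} inside his proof.

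One step in your sketch does gloss over the main technical content. You invoke Zhang's inequality to bound $\mathfrak{e}(\overline{N}(tf))$ below by $(\dim X + 1)^{-1}\deg(X)^{-1}(\overline{N}(tf)^{\,\dim X + 1})$, but Zhang's inequality in that form requires the metrised bundle to be arithmetically nef, and the twist $\overline{N}(tf)$ is not nef for generic smooth $f$ and $t\ne 0$. This is precisely why Yuan's equidistribution argument replaces the raw self-intersection by the arithmetic volume $\widehat{\mathrm{vol}}$ and uses an arithmetic Siu-type inequality to control its behaviour at $t=0$, and why K\"{u}hne has to pass through a compactification-plus-approximation argument rather than quoting Zhang directly. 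Your first-order expansion and the identification $k = \bigl(\int_{X^{\mathrm{an}}}(\omega|_X)^{\wedge \dim X}\bigr)^{-1}$ are correct once that machinery is in place, but as written that one line is not literally justified and hides the analytic heart of the proof.
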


The sequence $\{x_n\}$ in Theorem~\ref{ThmEquidistr} will be called a \textit{generic small sequence} in $X$.

This equidistribution result was proved by DeMarco--Mavraki \cite[Cor.1.2]{DeMarcoMavraki} when $\cA \rightarrow S$ is a fiber product of elliptic surfaces and $X$ is a section.

\medskip

The first step to use equidistribution to study Bogomolov type problems is through the following corollary, which is a minor improvement of \cite[Lem.22]{KuehneUnifMM}. The idea already showed up in the work of Ullmo \cite{Ullmo} and S.~Zhang \cite{ZhangEquidist}. We include the proof in this survey as it is not complicated and because of the importance of the corollary. This proof is almost a literal copy of \cite[Lem.22]{KuehneUnifMM}.

\begin{cor}\label{CorEquidistr}
Let $X$ be a non-degenerate subvariety of $\cA$ defined over $\IQbar$. Set $\mu = k (\omega|_X)^{\wedge \dim X}$ to be the measure on $X(\C)$ with $k=k(X,\omega)>0$ the constant from Theorem~\ref{ThmEquidistr}.

For each function $f \in  \mathscr{C}^0_{\mathrm{c}}(X^{\mathrm{an}})$ and every $\epsilon > 0$, there exist a proper subvariety $Z_{f,\epsilon}$ of $X$ and a constant $\delta_{\epsilon} > 0$ such that each $x \in (X\setminus Z_{f,\epsilon})(\IQbar)$ satisfies the following alternative:
\begin{enumerate}
\item[(i)] Either $\hat{h}_{\cL}(x) \ge \delta_{\epsilon}$;
\item[(ii)]  or $\left| \frac{1}{\#O(x)} \sum_{y \in O(x)} f(y) - \int_{X^{\mathrm{an}}} f \mu \right| < \epsilon$.
\end{enumerate}
\end{cor}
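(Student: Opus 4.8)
The plan is to deduce the corollary from the equidistribution Theorem~\ref{ThmEquidistr} by the standard argument going back to Ullmo and S.~Zhang (this is essentially \cite[Lem.22]{KuehneUnifMM}): assuming that no admissible pair $(Z_{f,\epsilon},\delta_\epsilon)$ exists, one manufactures a generic small sequence contradicting \eqref{EqEquidistr}. Write $I:=\int_{X^{\mathrm{an}}}f\mu=k\int_{X^{\mathrm{an}}}f\,(\omega|_X)^{\wedge\dim X}$. For each real $\delta>0$ set
\[
B_\delta:=\left\{x\in X(\IQbar):\hat{h}_{\cL}(x)<\delta\ \text{ and }\ \left|\frac{1}{\#O(x)}\sum_{y\in O(x)}f(y)-I\right|\ge\epsilon\right\},
\]
and let $Z_\delta$ be its Zariski closure in $X$. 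Since $B_{\delta'}\subseteq B_\delta$ for $\delta'\le\delta$, the closed subsets $Z_{1/n}$ ($n\ge1$) form a descending chain, which stabilizes by the Noetherian property of $X$: $Z_{1/n}=Z_\infty$ for all $n\ge n_0$.

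First I would dispose of the case $Z_\infty\subsetneq X$, which already yields the statement: take $Z_{f,\epsilon}:=Z_\infty$ and $\delta_\epsilon:=1/n_0$. If $x\in(X\setminus Z_{f,\epsilon})(\IQbar)$, then $x\notin Z_{1/n_0}\supseteq B_{1/n_0}$, hence $x\notin B_{1/n_0}$; negating the defining condition of $B_{1/n_0}$ gives precisely that $\hat{h}_{\cL}(x)\ge1/n_0=\delta_\epsilon$ (alternative (i)) or $\left|\frac{1}{\#O(x)}\sum_{y\in O(x)}f(y)-I\right|<\epsilon$ (alternative (ii)). It then remains to rule out $Z_\infty=X$. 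In that case, the chain being descending and eventually equal to $X$, we get $Z_{1/n}=X$ for every $n\ge1$, i.e.\ each $B_{1/n}$ is Zariski dense in $X$. Because $X$ is defined over the countable field $\IQbar$, there are only countably many irreducible proper closed $\IQbar$-subvarieties of $X$; enumerate them $W_1,W_2,\dots$ and put $V_n:=W_1\cup\cdots\cup W_n$, a proper closed subset of the irreducible variety $X$, so that $X\setminus V_n$ is nonempty and open. A Zariski dense subset of $X$ meets every nonempty open subset, so we may pick $x_n\in B_{1/n}\cap(X\setminus V_n)(\IQbar)$. Then $\hat{h}_{\cL}(x_n)<1/n\to0$, and any proper closed $\IQbar$-subvariety $W$ has all of its (finitely many) components among the $W_i$, say with indices $\le m$, whence $x_n\notin V_n\supseteq W$ for $n\ge m$; thus $\{x_n\}$ is a generic small sequence in $X$. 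Theorem~\ref{ThmEquidistr} then forces $\frac{1}{\#O(x_n)}\sum_{y\in O(x_n)}f(y)\to I$, contradicting $\left|\frac{1}{\#O(x_n)}\sum_{y\in O(x_n)}f(y)-I\right|\ge\epsilon$ for all $n$. Hence $Z_\infty\subsetneq X$ and the corollary follows.

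Beyond invoking Theorem~\ref{ThmEquidistr}, there is no genuine obstacle here; the only delicate point is the extraction of the generic small sequence, which uses that a sequence in $X(\IQbar)$ is generic once it eventually leaves each of the countably many proper closed $\IQbar$-subvarieties of $X$ (so that a diagonal enumeration suffices), together with the elementary fact that a Zariski dense subset of an irreducible variety meets every nonempty Zariski open subset.
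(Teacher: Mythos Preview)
Your proof is correct and follows essentially the same strategy as the paper's: assuming no valid pair $(Z_{f,\epsilon},\delta_\epsilon)$ exists, the bad sets $B_\delta$ are Zariski dense for all $\delta$, and a diagonal extraction against an enumeration of the countably many proper closed $\IQbar$-subvarieties produces a generic small sequence violating Theorem~\ref{ThmEquidistr}. Your organization via Noetherian stabilization of the chain $Z_{1/n}$ is a minor streamlining---the paper instead first disposes of the case where the small-height locus $\{x:\hat h_{\cL}(x)<1/n\}$ itself fails to be dense, and then argues by contradiction on the bad set---but the substance is the same.
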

\small
\begin{proof} 
To invoke Theorem~\ref{ThmEquidistr}, we need a generic small sequence in $X$. Let us first explain why we can assume this.

Consider, for each $n \in \mathbb{N}$, the set $X_n := \{x \in X(\IQbar) : \hat{h}_{\cL}(x) < 1/n\}$. Then we have a descending chain $\cdots \supseteq X_n \supseteq X_{n+1} \supseteq \cdots$. Assume that $X_n$ is not Zariski dense in $X$ for some $n$. Then for any $f$ and $\epsilon$, one can take $\delta_{\epsilon} = 1/n$ and $Z_{f,\epsilon} = \bar{X_n}^{\mathrm{Zar}}$. Notice that in this case, part (i) always holds true.

So from now on, we assume that $X_n$ is Zariski dense in $X$ for all $n \gg 1$. There are only countably many proper closed subvarieties of $X$ defined over $\IQbar$, say $\{Z_n\}_{n\in \mathbb{N}}$. For each $n \in \mathbb{N}$, take $x_n \in X_n \setminus Z_n(\IQbar)$. Such an $x_n$ exists because $X_n$ is Zariski dense in $X$ and $X\setminus Z_n$ is Zariski open dense in $X$. Then $\hat{h}_{\cL}(x_n) \rightarrow 0$, and $x_n$ converges to the generic point of $X$. Hence  $\{x_n\}_{n \in \mathbb{N}}$ is a generic small sequence in $X$. Therefore we are in the situation of Theorem~\ref{ThmEquidistr}.

Suppose that the conclusion is false. Then there exist some $f \in \mathscr{C}_{\mathrm{c}}(X^{\mathrm{an}})$ and some $\epsilon > 0$ with the following property. For any $\delta >0$, the set
\[
\mathscr{B}_{\delta}:= \left\{x \in X(\IQbar) : \hat{h}_{\cL}(x) < \delta \text{ and } \left| \frac{1}{\#O(x)} \sum_{y \in O(x)} f(y) - \int_{X^{\mathrm{an}}} f \mu \right| \ge \epsilon \right\}
\]
is Zariski dense in $X$. Then as in the previous paragraph, we can find a generic small sequence $\{x_n\}_{n \in \mathbb{N}}$ in $X$, with each $x_n \in \mathscr{B}_{1/n}$, such that
\[
\left| \frac{1}{\#O(x_n)} \sum_{y \in O(x_n)} f(y) - \int_{X^{\mathrm{an}}} f \mu \right| \ge \epsilon
\]
for all $n$. This contradicts the equidistribution \eqref{EqEquidistr}. Hence we are done.
\end{proof}

\normalsize

\subsection{Application to uniform Bogomolov}
Ullmo \cite{Ullmo} and S.~Zhang \cite{ZhangEquidist} used equidistribution results to prove the Bogomolov conjecture on a single abelian variety over $\IQbar$. A key idea in this approach is to apply the equidistribution result twice and compare the measures on two varieties linked by the \textit{Faltings--Zhang map}. The upshot is that we are \textit{not} in case (ii) of the alternative in the single-abelian-variety version of Corollary~\ref{CorEquidistr}. 

It is natural to expect that the equidistribution result in families (Theorem~\ref{ThmEquidistr}) can be applied to solve some family-version Bogomolov type problems, \textit{provided that there are some non-degenerate subvarieties to start with}.

A useful tool to construct non-degenerate subvarieties is Theorem~\ref{ThmNonDegXA}. Starting from this construction, K\"{u}hne  ran a modified version of Ullmo--Zhang's approach on families of curves in abelian schemes using his family version of the equidistribution (more precisely, Corollary~\ref{CorEquidistr}). 
In the end, with a fiberwise consideration as in the proof of Proposition~\ref{PropNGPLargeCurve}, he proved the following result \cite[Prop.21]{KuehneUnifMM}. 
 We include this beautiful proof in this survey.





Let $\iota \colon \cA \rightarrow \mathfrak{A}_g$ be the modular map from \eqref{EqModularMap2}. 
\begin{prop}\label{PropUnifBog}
Let $\mathfrak{C} \subseteq \cA$ be an irreducible subvariety satisfying the following properties. Each fiber $\mathfrak{C}_s$ of $\mathfrak{C} \rightarrow S$ is an irreducible curve which generates $\cA_s$ and is not a translate of an elliptic curve, and $\iota|_{\mathfrak{C}\times_S S'}$ is generically finite for all subvarieties $S' \subseteq S$.

Then there exist constants $c_2''$ and $c_3''$ such that for each $s \in S(\IQbar)$, we have
\begin{equation}
\#\{x \in \mathfrak{C}_s(\IQbar): \hat{h}_{\cL}(x) \le c_3''\} < c_2''.
\end{equation}
\end{prop}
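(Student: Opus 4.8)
The plan is to imitate the proof of Proposition~\ref{PropNGPLargeCurve}: induct on $\dim S$, produce non-degenerate subvarieties from $\mathfrak{C}$ via Theorem~\ref{ThmNonDegXA}, and close with the fiberwise Bézout estimate Lemma~\ref{LemmaNogaAlon} — only now the role of the height inequality is played by the equidistribution Corollary~\ref{CorEquidistr}, applied twice in the style of Ullmo--Zhang. Concretely, I would first reduce the statement to the following: for a suitable integer $M$ there exist a proper closed subvariety $Z^{*}\subsetneq \mathfrak{C}^{[M+1]}$ and a constant $c_3''>0$ such that every $(x_0,\dots,x_M)\in\mathfrak{C}^{[M+1]}(\IQbar)$ with $\hat{h}_{\cL}(x_i)\le c_3''$ for all $i$ lies in $Z^{*}$. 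Granting this, for $s\in S(\IQbar)$ set $\Sigma_s=\{x\in\mathfrak{C}_s(\IQbar):\hat{h}_{\cL}(x)\le c_3''\}$; then $\Sigma_s^{M+1}\subseteq Z^{*}_s$, and since $\mathfrak{C}^{[M+1]}$ is irreducible the locus $S''=\{s:Z^{*}_s=\mathfrak{C}_s^{M+1}\}$ is a proper closed subset of $S$. For $s\notin S''$ Lemma~\ref{LemmaNogaAlon} (with $V=\cA_s$, $L=\cL_s$, $C=\mathfrak{C}_s$, $Z=Z^{*}_s$, and $M+1$ in place of its $M$) bounds $\#\Sigma_s$ by $\deg_{\cL_s}(\mathfrak{C}_s)^{(M+1)(M+2)/2}\deg_{\cL_s}(\cA_s)^{(M+1)M/2}\deg_{\cL_s^{\boxtimes(M+1)}}(Z^{*}_s)$, which is uniform in $s$ (degrees are locally constant in a flat family, after a Noetherian stratification as in the footnote to Proposition~\ref{PropNGPLargeCurve}); this yields the desired $c_2''$. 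For $s\in S''$ one concludes by the inductive hypothesis applied to (the components of) $S''$ and the base-changed data, all of whose hypotheses persist; the base case $\dim S=0$ is the case $S''=\emptyset$ treated above.

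To construct $Z^{*}$, fix $M\ge\dim\mathfrak{C}=\dim S+1$. After the harmless étale base change in the footnote to Theorem~\ref{ThmNonDegXA}, the hypotheses on $\mathfrak{C}$ verify conditions (a)--(c) of that theorem for $\mathfrak{C}\subseteq\cA\to S$: (a) holds as $\dim\mathfrak{C}>\dim S$; (b) holds since each $\mathfrak{C}_s$ generates $\cA_s$; and (c) holds because an irreducible curve that is not a translate of an elliptic curve has finite stabilizer. Moreover the generic finiteness of $\iota|_{\mathfrak{C}\times_S S'}$ for all $S'\subseteq S$ supplies the generic finiteness of the modular maps needed in both parts of Theorem~\ref{ThmNonDegXA}. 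Hence $X:=\mathfrak{C}^{[M+1]}\subseteq\cA^{[M+1]}$ and $Y:=\mathscr{D}_M^{\cA}(\mathfrak{C}^{[M+1]})\subseteq\cA^{[M]}$ are both non-degenerate, defined over $\IQbar$. Equip $\cA^{[M+1]}$ and $\cA^{[M]}$ with the symmetric relatively ample bundles $\cL^{\boxtimes(M+1)}$, $\cL^{\boxtimes M}$ and with the Betti forms $\omega_{M+1}$, $\omega_M$ from Remark~\ref{RmkBettiFormFiberProduct}, let $\mu_X$, $\mu_Y$ be the measures from Theorem~\ref{ThmEquidistr}, and put $\phi:=\mathscr{D}_M^{\cA}|_X\colon X\to Y$. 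Since $\mathfrak{C}_s$ has finite stabilizer, $\phi$ is generically finite (fiberwise, a generic fibre of $(P_0,\dots,P_M)\mapsto(P_1-P_0,\dots,P_M-P_0)$ is $\mathfrak{C}_s\cap\bigcap_j(\mathfrak{C}_s-Q_j)$, finite for generic $Q_j$), and $\phi$ is proper (both sides are proper over $S$), so $g\circ\phi\in\mathscr{C}^0_{\mathrm{c}}(X^{\mathrm{an}})$ whenever $g\in\mathscr{C}^0_{\mathrm{c}}(Y^{\mathrm{an}})$.

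The core of the argument is the claim that $\phi_{*}\mu_X\ne\mu_Y$ as measures on $Y(\C)$. Granting it, choose $g\in\mathscr{C}^0_{\mathrm{c}}(Y^{\mathrm{an}})$ with $\eta:=\bigl|\int_Y g\,\mu_Y-\int_Y g\,\phi_{*}\mu_X\bigr|>0$, and apply Corollary~\ref{CorEquidistr} to $(Y,g,\eta/3)$ and to $(X,g\circ\phi,\eta/3)$, obtaining proper subvarieties $Z^Y\subsetneq Y$, $Z^X\subsetneq X$ and thresholds $\delta^Y,\delta^X>0$. Set $Z^{*}:=Z^X\cup\phi^{-1}(Z^Y)\subsetneq X$ (proper, as $\phi$ is dominant) and pick $c_3''>0$ small enough that $\hat{h}_{\cL}(x_i)\le c_3''$ for all $i$ forces both $\hat{h}_{\cL^{\boxtimes(M+1)}}(x)=\sum_i\hat{h}_{\cL}(x_i)\le(M+1)c_3''<\delta^X$ and, by the triangle inequality for the norm $\hat{h}_{\cL}^{1/2}$, $\hat{h}_{\cL^{\boxtimes M}}(\phi(x))=\sum_j\hat{h}_{\cL}(x_j-x_0)\le 4Mc_3''<\delta^Y$. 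For $x=(x_0,\dots,x_M)\in(X\setminus Z^{*})(\IQbar)$ with all $\hat{h}_{\cL}(x_i)\le c_3''$, Corollary~\ref{CorEquidistr}(ii) then holds for both $X$ and $Y$; combining this with the elementary identity $\frac{1}{\#O(x)}\sum_{z\in O(x)}(g\circ\phi)(z)=\frac{1}{\#O(\phi(x))}\sum_{y\in O(\phi(x))}g(y)$ (a formal consequence of $\phi$ being defined over $\IQbar$, using $\#O(x)=\#(\phi^{-1}(\phi(x))\cap O(x))\cdot\#O(\phi(x))$) and $\int_X(g\circ\phi)\mu_X=\int_Y g\,\phi_{*}\mu_X$ gives $\eta<2\eta/3$, a contradiction. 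Hence $x\in Z^{*}$, which is what was needed.

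The hard part will be the claim $\phi_{*}\mu_X\ne\mu_Y$; this is exactly the Ullmo--Zhang rigidity phenomenon (cf.\ \cite{Ullmo,ZhangEquidist}) transplanted to the relative setting, and it is the one genuinely delicate point. Since $\phi$ is generically finite, $\phi^{*}\phi_{*}$ acts as multiplication by $\deg\phi$ on top-degree forms, so $\phi_{*}\mu_X=c\,\mu_Y$ would force $(\omega_{M+1}|_X)^{\wedge\dim X}=c'(\phi^{*}\omega_M)^{\wedge\dim X}$ as $(\dim X,\dim X)$-forms on $X$. Using the explicit formula for the Betti form (Construction~\ref{ConstrBettiForm}) and the fact that the Betti map is fiberwise a group homomorphism (Proposition~\ref{PropBettiMap}(ii)), one computes $\phi^{*}\omega_M$ in Betti coordinates and finds that it differs from $\omega_{M+1}|_X$ by genuine cross terms arising from the subtraction maps $(P_0,P_j)\mapsto P_j-P_0$; the point is that this discrepancy cannot be absorbed into a scalar at a general point of $X\setminus X^{\mathrm{deg}}$ unless the curves $\mathfrak{C}_s$ are translates of elliptic curves, which is excluded by hypothesis (this is precisely where the assumption "not a translate of an elliptic curve" is used). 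Making this comparison rigorous — the analogue of the measure-comparison step in the classical Bogomolov proof — is the main obstacle, and is the heart of K\"uhne's argument.
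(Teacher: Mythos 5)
Your overall architecture is the right one and matches the paper's: induct on $\dim S$, manufacture two non-degenerate subvarieties linked by a Faltings--Zhang-type map, apply Corollary~\ref{CorEquidistr} twice to force a height lower bound off a proper closed subset, and close fiberwise with Lemma~\ref{LemmaNogaAlon}. Your Galois-orbit averaging identity for a merely generically finite $\phi$ defined over $\IQbar$ is also correct. But there is a genuine gap at the one step you yourself flag as "the main obstacle": the claim $\phi_{*}\mu_X\neq\mu_Y$. You propose to prove it by computing $\phi^{*}\omega_M$ in Betti coordinates and arguing that cross terms cannot be absorbed into a scalar unless the fibers are elliptic translates. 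This is not carried out, and as stated it is not the mechanism of the actual proof; in particular the hypothesis "not a translate of an elliptic curve" is \emph{not} what drives the measure comparison (it is used to get generic injectivity of $\mathscr{D}_M$ on $\mathfrak{C}^{[M+1]}$ and the finite-stabilizer hypothesis (c) of Theorem~\ref{ThmNonDegXA}).

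The device you are missing is twofold. First, the paper does not work with $X=\mathfrak{C}^{[M+1]}$ and $Y=\mathscr{D}_M^{\cA}(X)$ directly; it first replaces $\mathfrak{C}$ by $\mathfrak{C}^{[m]}$ with $m\ge\dim S$, so that the single factor is itself non-degenerate, and then takes the \emph{augmented} map $\mathscr{D}=(\mathrm{id},\mathscr{D}_M^{\cA^{[m]}})$ on $\mathfrak{C}^{[m]}\times_S(\mathfrak{C}^{[m]})^{[M+1]}$. Second, with this setup the inequality of measures is checked at a single point: at $(\mathbf{x},\Delta_{\mathbf{x}})$ with $\mathbf{x}$ a non-degeneracy point of $\mathfrak{C}^{[m]}$, Lemma~\ref{LemmaNonDegPointProduct} gives $(\mu_1)_{(\mathbf{x},\Delta_{\mathbf{x}})}\neq 0$, while $\mathscr{D}$ contracts the fiberwise diagonal through that point (its fiber there has positive dimension), so $\mathrm{d}\mathscr{D}$ has nontrivial kernel and $(\mathscr{D}^{*}\mu_2)_{(\mathbf{x},\Delta_{\mathbf{x}})}=0$. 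No Betti-coordinate computation is needed. Note that your version of this trick would fail: to apply Lemma~\ref{LemmaNonDegPointProduct} at the diagonal point of $\mathfrak{C}^{[M+1]}$ you would need $\mathfrak{C}$ itself (dimension $\dim S+1$) to be non-degenerate in $\cA$, which Theorem~\ref{ThmNonDegXA} does not provide and which is false in general; this is exactly why the intermediate power $\mathfrak{C}^{[m]}$ and the extra identity factor are introduced. Until the measure inequality is established, the proof is incomplete.
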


Before moving on to the proof, we point out that Proposition~\ref{PropUnifBog} applied to a suitable family yields \cite[Thm.3]{KuehneUnifMM} immediately. See the end of $\mathsection$\ref{SubsectionProofNGP}.

\begin{proof}[Proof of Proposition~\ref{PropUnifBog}] 
We prove this proposition by induction on $\dim S$. The proof for the base step $\dim S =0$ is contained in the induction step.

For readers' convenience, we divide the proof into several steps.

\noindent{\boxed{Step~1}} Construct non-degenerate subvarieties.



Fix $m \ge \dim S$. Consider $\mathfrak{C}^{[m]}:=\mathfrak{C}\times_S\cdots\times_S \mathfrak{C}$ ($m$-copies) and $\cA^{[m]} \rightarrow S$. By generic smoothness, there exists a Zariski open dense subset $S^{\circ}$ of $S$ such that $(\mathfrak{C}^{[m]})^{\mathrm{sm}} \times_S S^{\circ} \rightarrow S^{\circ}$ is a smooth morphism. Moreover up to replacing $S^{\circ}$ by a Zariski open dense subset, we may and do assume $S^{\circ}$ is smooth. Now that each irreducible component of $\overline{S \setminus S^{\circ}}$ has dimension $\le \dim S -1$, by induction hypothesis it suffices to prove the proposition with $S$ replaced by $S^{\circ}$. Hence we may and do assume:
\begin{equation}\label{EqGenericSmoothness}
S\text{ is smooth and }(\mathfrak{C}^{[m]})^{\mathrm{sm}} \rightarrow S \text{ is a smooth morphism}.
\end{equation}

By Theorem~\ref{ThmNonDegXA}(i) applied to $\mathfrak{C} \subseteq \cA \rightarrow S$, we have that $\mathfrak{C}^{[m]}:=\mathfrak{C}\times_S\cdots\times_S \mathfrak{C}$ ($m$-copies) is a non-degenerate subvariety of $\cA^{[m]}$. By Definition~\ref{DefnNonDeg}, for the Betti form $\omega_m$ of $\cA^{[m]}$, there exists a point $\mathbf{x} \in (\mathfrak{C}^{[m]})^{\mathrm{sm}}(\C)$ such that 
\begin{equation}\label{EqNonDegPoint}
(\omega_m|_{\mathfrak{C}^{[m]}}^{\wedge \dim \mathfrak{C}^{[m]}})_\mathbf{x} \not= 0.
\end{equation}

For each $M \gg 1$, 
recall the proper $S$-morphism (for $\cA^{[m]}$ instead of $\cA$) from \eqref{EqFaltingsZhangGeneral}
\begin{equation}\label{EqDDMA}
\mathscr{D}_M^{\cA^{[m]}} \colon (\cA^{[m]})^{[M+1]} \rightarrow (\cA^{[m]})^{[M]}
\end{equation}
fiberwise defined by $(\mathbf{a}_0,\mathbf{a}_1,\ldots,\mathbf{a}_M) \mapsto (\mathbf{a}_1-\mathbf{a}_0,\ldots,\mathbf{a}_M-\mathbf{a}_0)$, with each $\mathbf{a}_i \in \cA^{[m]}(\IQbar)$.

By assumption on $\mathfrak{C}$ (no fiber is a translate of an elliptic curve), it is known that $\mathscr{D}_M^{\cA^{[m]}}|_{(\mathfrak{C}^{[m]})^{[M+1]}}$ is generically finite for $M \gg 1$.

A key point of the classical Ullmo--Zhang approach is to use $\mathscr{D}_M^{\cA^{[m]}}$. A novelty in K\"{u}hne's proof is to consider an extra factor
\begin{equation}\label{EqFZFinal}
\mathscr{D}:= (\mathrm{id}, \mathscr{D}_M^{\cA^{[m]}}) \colon \cA^{[m]} \times_S (\cA^{[m]})^{[M+1]} \rightarrow \cA^{[m]} \times_S (\cA^{[m]})^{[M]},
\end{equation}
which is generically injective. 
In $\cA^{[m]} \times_S (\cA^{[m]})^{[M+1]}$, we have a non-degenerate subvariety $\mathfrak{C}^{[m]} \times_S (\mathfrak{C}^{[m]})^{[M+1]}$.

Let us show that $\mathscr{D}(\mathfrak{C}^{[m]} \times (\mathfrak{C}^{[m]})^{[M+1]})$ is non-degenerate in $\cA^{[m]} \times_S (\cA^{[m]})^{[M]}$. Indeed, 
\[
\mathscr{D}(\mathfrak{C}^{[m]} \times_S (\mathfrak{C}^{[m]})^{[M+1]}) = \mathfrak{C}^{[m]} \times_S \mathscr{D}_M^{\cA^{[m]}}((\mathfrak{C}^{[m]})^{[M+1]}),
\]
and hence is non-degenerate because $\mathfrak{C}^{[m]}$ is non-degenerate; see Lemma~\ref{LemmaNonDegFiberProd}.


Now we have obtained the two desired non-degenerate subvarieties $\mathfrak{C}^{[m]} \times_S (\mathfrak{C}^{[m]})^{[M+1]}$ and $\mathscr{D}(\mathfrak{C}^{[m]} \times (\mathfrak{C}^{[m]})^{[M+1]})$. In particular, we are in the situation of Corollary~\ref{CorEquidistr} for both.

\noindent{\boxed{Step~2}} Choose suitable functions $f_1,f_2$ and constant $\epsilon > 0$ for later applications of Corollary~\ref{CorEquidistr}.


Let $\mu_1$ be the measure on $\mathfrak{C}^{[m]} \times_S (\mathfrak{C}^{[m]})^{[M+1]}(\C) = \mathfrak{C}^{[m(M+2)]}(\C)$ as in Corollary~\ref{CorEquidistr}, and let $\mu_2$ be the measure on $\mathscr{D}(\mathfrak{C}^{[m]} \times (\mathfrak{C}^{[m]})^{[M+1]})(\C) = \mathscr{D}(\mathfrak{C}^{[m(M+2)]})(\C)$ as in Corollary~\ref{CorEquidistr}. We will prove $\mu_1 \not= \mathscr{D}|_{\mathfrak{C}^{[m(M+2)]}}^*\mu_2$. Assuming this, then there exist a constant $\epsilon > 0$ and a function $f_1 \in \mathscr{C}_{\mathrm{c}}^0(\mathfrak{C}^{[m(M+2)], \mathrm{an}})$ such that
\begin{equation}\label{EqIntegralDiff}
\left| \int_{\mathfrak{C}^{[m(M+2)], \mathrm{an}}} f_1\mu_1 - \int_{\mathfrak{C}^{[m(M+2)], \mathrm{an}}} f_1 \mathscr{D}|_{\mathfrak{C}^{[m(M+2)]}}^*\mu_2  \right| > 2\epsilon.
\end{equation}
Moreover, since $\mathscr{D}|_{\mathfrak{C}^{[m(M+2)]}}$ is generically finite, it is not hard to show that one can choose an $f_1$ satisfying the following property: There exists a unique $f_2 \in \mathscr{C}_{\mathrm{c}}^0( \mathscr{D}(\mathfrak{C}^{[m(M+2)]})^{\mathrm{an}})$ such that $f_1 = f_2 \circ \mathscr{D}$.

Now let us prove $\mu_1 \not= \mathscr{D}|_{\mathfrak{C}^{[m(M+1)]}}^*\mu_2$.\footnote{It is for this purpose that we need the $\mathfrak{C}^{[m]}$ before constructing the two desired non-degenerate subvarieties linked by the Faltings--Zhang map.}

For the point $\mathbf{x} \in (\mathfrak{C}^{[m]})^{\mathrm{sm}}(\C)$ from \eqref{EqNonDegPoint}, denote by $\Delta_\mathbf{x}$ the point $(\mathbf{x},\ldots,\mathbf{x})$ in $(\mathfrak{C}^{[m]})^{[M+1]}(\C)$. Then $(\mathbf{x},\Delta_\mathbf{x}) \in (\mathfrak{C}^{[m]} \times_S (\mathfrak{C}^{[m]})^{[M+1]})(\C)$, which is furthermore a smooth point by \eqref{EqGenericSmoothness}. We have $(\mu_1)_{(\mathbf{x}, \Delta_\mathbf{x})} \not= 0$ by Lemma~\ref{LemmaNonDegPointProduct}.

On the other hand, 
$\mathscr{D}_M^{\cA^{[m]}}(\Delta_\mathbf{x})$ is the origin of fiber of $(\cA^{[m]})^{[M]} \rightarrow S$ in question (which we call $(\cA_s^m)^M$), so $\mathscr{D}_M^{\cA^{[m]}}|_{\mathfrak{C}^{[m(M+1)]}}^{-1}(\mathscr{D}_M^{\cA^{[m]}}(\Delta_\mathbf{x}))$ contains the diagonal of $\mathfrak{C}_s^m \subseteq \cA_s^m$ in $(\cA_s^m)^M$ (which for the moment we denote by $\Delta_{\mathfrak{C}_s^m}$).

Therefore for the morphism $\mathscr{D} = (\mathrm{id}, \mathscr{D}_M^{\cA^{[m]}}) $ from \eqref{EqFZFinal}, $\mathscr{D}|_{\mathfrak{C}^{[m]} \times_S (\mathfrak{C}^{[m]})^{[M+1]}}^{-1}(\mathbf{x}, \mathscr{D}_M^{\cA^{[m]}}(\Delta_{\mathbf{x}}))$ contains $(\mathbf{x}, \Delta_{\mathfrak{C}_s^m})$. In particular 
Thus $\dim \mathscr{D}|_{\mathfrak{C}^{[m]} \times_S (\mathfrak{C}^{[m]})^{[M+1]}}^{-1}(\mathbf{x}, \mathscr{D}_M^{\cA^{[m]}}(\Delta_{\mathbf{x}})) > 0$, and so the linear map
\[
\mathrm{d}\mathscr{D}|_{\mathfrak{C}^{[m]} \times_S (\mathfrak{C}^{[m]})^{[M+1]}} \colon T_{(\mathbf{x},\Delta_\mathbf{x})}(\mathfrak{C}^{[m]} \times_S \mathfrak{C}^{[m(M+1)]}) \rightarrow T_{(\mathbf{x},\mathscr{D}_M^{\cA^{[m]}}(\Delta_\mathbf{x}))}\mathscr{D}(\mathfrak{C}^{[m]} \times_S \mathfrak{C}^{[m(M+1)]})
\]
has non-trivial kernel. Thus $(\mathscr{D}|_{\mathfrak{C}^{[m(M+1)]}}^*\mu_2)_{(\mathbf{x},\Delta_{\mathbf{x}})} = 0$.

Thus we get $\mu_1 \not= \mathscr{D}|_{\mathfrak{C}^{[m(M+2)]}}^*\mu_2$ by looking at their evaluations at $(\mathbf{x},\Delta_{\mathbf{x}})$. Hence we are done for this step.

\noindent{\boxed{Step~3}} Prove some height lower bounds on $\mathfrak{C}^{[m(M+2)]}$ or $\mathscr{D}(\mathfrak{C}^{[m(M+2)]})$.

We apply the equidistribution result, or more precisely Corollary~\ref{CorEquidistr},  twice.

Apply Corollary~\ref{CorEquidistr} to $\mathfrak{C}^{[m]} \times_S (\mathfrak{C}^{[m]})^{[M+1]}$, $f_1$ and $\epsilon$. We thus obtain a constant $\delta_{\epsilon,1}>0$ and a Zariski closed proper subset $Z_1:=Z_{f_1,\epsilon}$ of $\mathfrak{C}^{[m]} \times_S (\mathfrak{C}^{[m]})^{[M+1]}$.  Apply Corollary~\ref{CorEquidistr} to $\mathscr{D}(\mathfrak{C}^{[m]} \times_S (\mathfrak{C}^{[m]})^{[M+1]})$, $f_2$ and $\epsilon$. We thus obtain a constant $\delta_{\epsilon,2}>0$ and a Zariski closed proper subset $Z_2:=Z_{f_2,\epsilon}$ of $\mathscr{D}(\mathfrak{C}^{[m]} \times_S (\mathfrak{C}^{[m]})^{[M+1]})$.

Let $\delta:= \min\{\delta_{\epsilon,1}, \delta_{\epsilon,2}\} > 0$, and let $Z= Z_1 \bigcup \mathscr{D}|_{\mathfrak{C}^{[m]} \times_S (\mathfrak{C}^{[m]})^{[M+1]}}^{-1}(Z_2) \bigcup Z_3$, where $Z_3$ is the largest Zariski closed subset of $\mathfrak{C}^{[m]} \times_S (\mathfrak{C}^{[m]})^{[M+1]}$ on which $\mathscr{D}$ is not injective. Then $Z$ is Zariski closed in $X := \mathfrak{C}^{[m]} \times_S (\mathfrak{C}^{[m]})^{[M+1]} = (\mathfrak{C}^{[m]})^{[M+2]}$, and is proper because $\mathscr{D}|_{\mathfrak{C}^{[m]} \times_S (\mathfrak{C}^{[m]})^{[M+1]}}$ is generically injective. If a point $\mathbf{x} \in (\mathfrak{C}^{[m(M+2)]}\setminus Z)(\IQbar)$ is such that $\hat{h}_{\cL^{\boxtimes m(M+2)}}(\mathbf{x}) < \delta$ and $\hat{h}_{\cL^{\boxtimes m(M+1)}}(\mathscr{D}(\mathbf{x})) < \delta$, then case (ii) of Corollary~\ref{CorEquidistr} holds true for both $\mathbf{x}, f_1, \mu_1$ and $\mathscr{D}(\mathbf{x}), f_2, \mu_2$. Thus
\[
\left|\int_{\mathfrak{C}^{[m(M+2)], \mathrm{an}}} f_1\mu_1 - \frac{1}{\#O(\mathbf{x})}\sum_{y\in O(\mathbf{x})}f_1(y) \right| < \epsilon \text{ and } \left|\int_{\mathscr{D}(\mathfrak{C}^{[m(M+2)]})^{\mathrm{an}}} f_2 \mu_2 - \frac{1}{\#O(\mathscr{D}(\mathbf{x}))}\sum_{y\in O(\mathscr{D}(\mathbf{x}))}f_2(y) \right| < \epsilon
\]
where $O(\cdot)$ is the Galois orbit. 
But $\frac{1}{\#O(\mathbf{x})}\sum_{y\in O(\mathbf{x})}f_1(y) = \frac{1}{\#O(\mathscr{D}(\mathbf{x}))}\sum_{y\in O(\mathscr{D}(\mathbf{x}))}f_2(y)$ because $f_1 = f_2 \circ \mathscr{D}$ and $\mathscr{D}$ is injective on $\mathfrak{C}^{[m(M+2)]} \setminus Z$. So we have
\[
\left|\int_{\mathfrak{C}^{[m(M+2)], \mathrm{an}}} f_1\mu_1- \int_{\mathscr{D}(\mathfrak{C}^{[m(M+2)]})^{\mathrm{an}}} f_2 \mu_2 \right| \le 2\epsilon.
\]
This contradicts \eqref{EqIntegralDiff} because $f_1 = f_2 \circ \mathscr{D}$.

Hence for each point $\mathbf{x} \in (\mathfrak{C}^{[m(M+2)]}\setminus Z)(\IQbar)$, we are in one of the following alternatives.
\begin{enumerate}
\item[(i)] Either $\hat{h}_{\cL^{\boxtimes m(M+2)}}(\mathbf{x}) \ge \delta$,
\item[(ii)] or $\hat{h}_{\cL^{\boxtimes m(M+1)}}(\mathscr{D}(\mathbf{x})) \ge \delta$.
\end{enumerate}

\noindent{\boxed{Step~4}} Finish the proof with a similar argument to the proof of Proposition~\ref{PropNGPLargeCurve}.

Denote by $\pi \colon \cA^{[m(M+2)]} \rightarrow S$ the structural morphism. As $Z$ is proper Zariski closed in $X$, each irreducible component of $\bar{S \setminus \pi(\mathfrak{C}^{[m(M+2)]}\setminus Z)}$ has dimension $\le \dim S - 1$. Thus by induction hypothesis, it suffices to prove the proposition with $S$ replace by $S \setminus \pi(\mathfrak{C}^{[m(M+2)]}\setminus Z)$. Therefore we may and do assume the following:
\begin{equation}\label{EqLemmaCondHolds}
\text{For each }s \in S(\IQbar) \text{, we have }Z_s \not= \mathfrak{C}_s^{m(M+2)}.
\end{equation}


By \eqref{EqLemmaCondHolds} and Lemma~\ref{LemmaNogaAlon}, there exists a constant $c_2''$ such that the following property holds. If a subset $\Sigma \subseteq \mathfrak{C}_s$ has cardinality $\ge c_2''$, then $\Sigma^{m(M+2)} \not\subseteq Z_s$.  This number $c_2''$ depends only on $m(M+2)$, the degree of $\mathfrak{C}_s$, and the degree of $Z_s$. Hence $c_2''$ can be chosen to be independent of $s$.

Let $c_3'' = \delta / 4m(M+2)$. Set $\Sigma := \{x \in \mathfrak{C}_s(\IQbar): \hat{h}_{\cL}(x) \le c_3'' \}$. It suffices to prove $\#\Sigma < c_2''$. Suppose not. Then there exist $x_1,\ldots,x_{m(M+2)} \in \Sigma$ such that $\mathbf{x}:= (x_1, \ldots, x_{m(M+2)}) \not\in Z_s$. Then $\hat{h}_{\cL^{\boxtimes m(M+2)}}(\mathbf{x}) = \sum_{i=1}^{m(M+2)} \hat{h}_{\cL}(x_i) \le m(M+2)c_3'' < \delta$. On the other hand, each component of $\mathscr{D}(\mathbf{x})$ is of the form $x_k$ or of the form $x_j-x_i$ for some $i$ and $j$, and $\hat{h}_{\cL}(x_j-x_i) \le 2\hat{h}_{\cL}(x_j) + 2\hat{h}_{\cL}(x_i) \le 4c_3''$. So $\hat{h}_{\cL^{\boxtimes m(M+1)}}(\mathscr{D}(\mathbf{x})) \le m(M+1) 4c_3'' < \delta$. Thus we have reached a contradiction to the height bounds at the end of Step~3. Hence we are done.
\end{proof}

\section{Proof of the New Gap Principle and proof of Uniform Mordell--Lang for curves}\label{SectionNGPProof}

\subsection{Parametrizing space of Abel--Jacobi embeddings}\label{SubsectionParaSpAJ}
Let  $\pi \colon \mathfrak{C}_g \rightarrow \mathbb{M}_g$ be the universal curve of genus $g$.

Each closed point in $\mathfrak{C}_g(\IQbar)$ parametrizes a pair $(C,P)$ with $C$ a smooth curve of genus $g$ defined over $\IQbar$ and $P\in C(\IQbar)$. Each such pair determines an Abel--Jacobi embedding from a curve to its Jacobian $j_P \colon C \rightarrow \mathrm{Jac}(C)$, and all Abel--Jacobi embeddings arise in this way. Thus $\mathfrak{C}_g$ is the parametrizing space of Abel--Jacobi embeddings.

Let us take a closer look at this. Consider the pullback of the relative Jacobian $\mathrm{Jac}(\mathfrak{C}_g/\mathbb{M}_g) \rightarrow \mathbb{M}_g$ along the universal curve $\pi \colon \mathfrak{C}_g \rightarrow \mathbb{M}_g$:
\begin{equation}\label{EqAbSchPullBack}
\mathfrak{J}_{\mathfrak{C}_g} := \mathrm{Jac}(\mathfrak{C}_g/\mathbb{M}_g) \times_{\mathbb{M}_g} \mathfrak{C}_g  \rightarrow \mathfrak{C}_g .
\end{equation}
This is an abelian scheme of relative dimension $g$.

\begin{prop}\label{PropTautoAbelJacobi}
There is a tautological family $\mathfrak{C} \rightarrow \mathfrak{C}_g$, with $\mathfrak{C} \subseteq \mathfrak{I}_{\mathfrak{C}_g}$ a closed $\mathfrak{C}_g$-immersion, satisfying the following property. 
For each $P \in \mathfrak{C}_g(\IQbar)$, 
\begin{equation}\label{EqFiberLargeCurve}
\text{the fiber $\mathfrak{C}_P$ (of $\mathfrak{C} \rightarrow \mathfrak{C}_g$ over $P$) is precisely $\mathfrak{C}_{\pi(P)} - P$},
\end{equation}
 with $\mathfrak{C}_{\pi(P)}$ being the fiber of $\pi \colon \mathfrak{C}_g\rightarrow \mathbb{M}_g$ in which $P$ lies.
 
 Moreover, $(X \subseteq \cA \rightarrow S) := (\mathfrak{C} \subseteq \mathfrak{I}_{\mathfrak{C}_g} \rightarrow \mathfrak{C}_g)$ satisfies all the hypotheses of Theorem~\ref{ThmNonDegXA}, and $\iota|_{\mathfrak{C}\times_S S'}$ is generically finite for the modular map $\iota \colon \mathfrak{J}_{\mathfrak{C}_g}\rightarrow \mathfrak{A}_g$ and for each irreducible subvariety $S' \subseteq S$.
\end{prop}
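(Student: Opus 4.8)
The plan is to build $\mathfrak{C}$ explicitly from a relative Abel--Jacobi morphism and then verify the hypotheses of Theorem~\ref{ThmNonDegXA} one at a time; the generic finiteness of $\iota|_{\mathfrak{C}\times_S S'}$ for \emph{every} subvariety $S'$ is the only point that needs real work.

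\emph{Construction.} Write $\mathfrak{C}_g^{(2)}:=\mathfrak{C}_g\times_{\mathbb{M}_g}\mathfrak{C}_g$, regarded as a family over $\mathfrak{C}_g$ via the second projection; this is the pullback of the universal curve $\pi\colon\mathfrak{C}_g\to\mathbb{M}_g$ along $\pi$ itself, and as such it carries the tautological section given by the diagonal $\mathfrak{C}_g\to\mathfrak{C}_g^{(2)}$. Since the Jacobian commutes with base change, $\mathrm{Jac}(\mathfrak{C}_g^{(2)}/\mathfrak{C}_g)=\mathrm{Jac}(\mathfrak{C}_g/\mathbb{M}_g)\times_{\mathbb{M}_g}\mathfrak{C}_g=\mathfrak{J}_{\mathfrak{C}_g}$, so the relative Abel--Jacobi morphism attached to the diagonal section is an $\mathfrak{C}_g$-morphism $a\colon\mathfrak{C}_g^{(2)}\to\mathfrak{J}_{\mathfrak{C}_g}$ which over $P\in\mathfrak{C}_g(\IQbar)$ is $Q\mapsto[Q-P]$. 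One checks that $a$ is a closed immersion: it is proper, both sides being proper over $\mathfrak{C}_g$; it is unramified and universally injective because it is so on each fibre, where it is the classical Abel--Jacobi closed immersion (genus $\ge1$); and a proper, unramified, universally injective morphism is a closed immersion. We then set $\mathfrak{C}:=a(\mathfrak{C}_g^{(2)})$, so that $\mathfrak{C}\cong\mathfrak{C}_g^{(2)}$ as $\mathfrak{C}_g$-schemes and the fibre over $P$ is $j_P(\mathfrak{C}_{\pi(P)})=\mathfrak{C}_{\pi(P)}-P$, which is \eqref{EqFiberLargeCurve}.

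\emph{The hypotheses of Theorem~\ref{ThmNonDegXA}.} Put $(X\subseteq\cA\to S)=(\mathfrak{C}\subseteq\mathfrak{J}_{\mathfrak{C}_g}\to\mathfrak{C}_g)$. Then $X\cong\mathfrak{C}_g^{(2)}$ is irreducible and its geometric generic fibre $X_{\bar\eta}$ is irreducible, because $\mathfrak{C}_g^{(2)}\to\mathfrak{C}_g$ is a smooth proper morphism with geometrically irreducible fibres over the irreducible base $\mathfrak{C}_g$; $\pi|_X$ is the surjective second projection, hence dominant; and $\dim X=\dim\mathfrak{C}_g+1>\dim\mathfrak{C}_g=\dim S$, which is (a). For (b) and (c), $X_{\bar\eta}$ is the Abel--Jacobi image of the geometric generic curve $C_{\bar\eta}$, a smooth projective curve of genus $g\ge2$, inside its Jacobian: such an image generates the Jacobian as a group, so it lies in no proper algebraic subgroup, giving (b); and for $g\ge2$ it has trivial stabilizer, since a positive-dimensional identity component of the stabilizer would exhibit the curve as a translate of an abelian subvariety of dimension $\ge1$, hence of genus $\le1$, giving (c). The same two facts show that for every $P$ the fibre $\mathfrak{C}_P=\mathfrak{C}_{\pi(P)}-P$ is an irreducible curve that generates $(\mathfrak{J}_{\mathfrak{C}_g})_P$ and is not a translate of an elliptic curve, i.e.\ the fibrewise hypotheses invoked in Propositions~\ref{PropNGPLargeCurve} and~\ref{PropUnifBog}.

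\emph{Generic finiteness of $\iota|_{\mathfrak{C}\times_S S'}$, and the main obstacle.} The modular map $\iota\colon\mathfrak{J}_{\mathfrak{C}_g}\to\mathfrak{A}_g$ factors as $\mathfrak{J}_{\mathfrak{C}_g}\xrightarrow{\ \mathrm{pr}\ }\mathrm{Jac}(\mathfrak{C}_g/\mathbb{M}_g)\to\mathfrak{A}_g$ with $\mathrm{pr}$ the projection and the second arrow quasi-finite, being the base change of the quasi-finite Torelli morphism $\tau$ along $\mathfrak{A}_g\to\mathbb{A}_g$ by~\eqref{EqUnivJac}; so for any irreducible $S'\subseteq\mathfrak{C}_g$ it suffices to show that $\mathrm{pr}|_{\mathfrak{C}\times_S S'}\colon\mathfrak{C}\times_S S'\to\mathrm{Jac}(\mathfrak{C}_g/\mathbb{M}_g)$ is generically finite onto its image. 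Under $\mathfrak{C}\cong\mathfrak{C}_g^{(2)}$ this is $(Q,P)\mapsto[Q-P]$ with $P$ ranging over $S'$ and $Q$ over $\mathfrak{C}_{\pi(P)}$, and it respects the projection to $\mathbb{M}_g$; hence it is enough to know that for a smooth projective curve $C=\mathfrak{C}_s$ of genus $\ge1$ the difference map $C\times C\to\mathrm{Jac}(C)$ has finite fibres over every nonzero point and its fibre over $0$ is the diagonal. The first assertion holds because a degree-one line bundle on $C$ has at most one section (two would give a degree-one map $C\to\mathbb{P}^1$), so for fixed $P$ there is at most one $Q$ with $[Q-P]=j$, and the set of such $P$ is the intersection of the Abel--Jacobi curve with a translate of it in $\mathrm{Pic}^1(C)$, which is finite unless the translation vector lies in the stabilizer of that curve, trivial for $g\ge2$, i.e.\ unless $j=0$. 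So inside $\mathfrak{C}\times_S S'$ the only positive-dimensional fibres lie over the zero section, and the locus they form, $\{(Q,P):Q=P,\ P\in S'\}\cong S'$, has dimension $\dim S'<\dim(\mathfrak{C}\times_S S')$, hence is a proper closed subset; therefore $\mathrm{pr}|_{\mathfrak{C}\times_S S'}$, and with it $\iota|_{\mathfrak{C}\times_S S'}$, is generically finite. The delicate point is exactly this last one — controlling the relative difference map for all $S'$, including those contained in a single curve $\mathfrak{C}_s$ or dominating a subvariety of $\mathbb{M}_g$ — and it rests on the same two classical facts used for (b) and (c); the construction of $\mathfrak{C}$ and the closed-immersion property of $a$ are routine once $\mathfrak{C}_g^{(2)}$ is recognized as the universal curve pulled back along itself.
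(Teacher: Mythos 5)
Your construction of $\mathfrak{C}$ as the image of the relative Abel--Jacobi map attached to the diagonal section is exactly the paper's morphism $\lambda\colon\mathfrak{C}_g\times_{\mathbb{M}_g}\mathfrak{C}_g\to\mathfrak{J}_{\mathfrak{C}_g}$, and your treatment of the generic finiteness (factoring $\iota$ through the projection $p_1$ and the quasi-finite base change of the Torelli map, then reducing to the fibrewise difference map $C\times C\to\mathrm{Jac}(C)$ for $g\ge2$) matches the paper's argument, which phrases the same fact as the dimension count $\dim p_1(\mathfrak{C}\times_{\mathfrak{C}_g}S')=1+\dim S'$. The proof is correct and essentially identical to the paper's, with some details (the closed-immersion property of $a$, the fibre analysis of the difference map) spelled out more explicitly than in the original.
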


Before proving Proposition~\ref{PropTautoAbelJacobi}, let us summarize the morphisms and families in the following diagram.
\begin{equation}\label{EqP1}
\xymatrix{
\mathfrak{C} ~ \ar@{^(->}[r] \ar[rd] &  \mathrm{Jac}(\mathfrak{C}_g/\mathbb{M}_g) \times_{\mathbb{M}_g} \mathfrak{C}_g = \mathfrak{J}_{\mathfrak{C}_g}  \ar[d] \ar[r]^-{p_1} \pullbackcorner & \mathrm{Jac}(\mathfrak{C}_g/\mathbb{M}_g) \ar[d] \\
& \mathfrak{C}_g  \ar[r]^-{\pi} & \mathbb{M}_g.
}
\end{equation}

\begin{proof}[Proof of Proposition~\ref{PropTautoAbelJacobi}]
 The projection to the first factor $\mathfrak{C}_g \times_{\mathbb{M}_g} \mathfrak{C}_g \rightarrow \mathfrak{C}_g$ and the morphism $\mathscr{D}_1 \colon \mathfrak{C}_g \times_{\mathbb{M}_g} \mathfrak{C}_g \rightarrow \mathrm{Jac}(\mathfrak{C}_g/\mathbb{M}_g)
$ from \eqref{EqFaltingZhang1} induce an $\mathbb{M}_g$-morphism
\begin{equation}
\lambda \colon \mathfrak{C}_g \times_{\mathbb{M}_g} \mathfrak{C}_g \rightarrow \mathrm{Jac}(\mathfrak{C}_g/\mathbb{M}_g) \times_{\mathbb{M}_g} \mathfrak{C}_g = \mathfrak{J}_{\mathfrak{C}_g}
\end{equation}
which over each point in $\mathbb{M}_g(\IQbar)$ becomes $(P,Q) \mapsto (Q-P, P)$. Set
\[
\mathfrak{C} := \lambda(\mathfrak{C}_g \times_{\mathbb{M}_g} \mathfrak{C}_g) \subseteq \mathfrak{J}_{\mathfrak{C}_g}.
\]
Then $\mathfrak{C}$ is a subvariety of $\mathfrak{J}_{\mathfrak{C}_g}$ which dominates $\mathfrak{C}_g$. The claim \eqref{EqFiberLargeCurve} is not hard to check by definition of $\mathfrak{C}$.  In particular, $\dim \mathfrak{C}= \dim \mathfrak{C}_g+1 = 3g-1$, and the geometric generic fiber of $\mathfrak{C} \rightarrow \mathfrak{C}_g$ is irreducible.
\
Hypotheses (a)-(c) of Theorem~\ref{ThmNonDegXA} are easy to check for $\mathfrak{C} \subseteq \mathfrak{J}_{\mathfrak{C}_g} \rightarrow  \mathfrak{C}_g$. Thus it remains to check that $\iota|_{\mathfrak{C}\times_{\mathfrak{C}_g}S'}$ is generically finite for the modular $\iota \colon \mathfrak{J}_{\mathfrak{C}_g}  \rightarrow \mathfrak{A}_g$ and for each irreducible subvariety $S' \subseteq \mathfrak{C}_g$. But in this case, $\iota$ is the composite of the natural projection $p_1 \colon \mathfrak{J}_{\mathfrak{C}_g}   \rightarrow \mathrm{Jac}(\mathfrak{C}_g/\mathbb{M}_g)$ in \eqref{EqP1} 
and the quasi-finite morphism $\mathrm{Jac}(\mathfrak{C}_g/\mathbb{M}_g) \rightarrow \mathfrak{A}_g$ from \eqref{EqUnivJac}. Thus it suffices to check that $p_1|_{\mathfrak{C}\times_{\mathfrak{C}_g}S'}$ is generically finite. This is true, because $\dim \mathfrak{C}\times_{\mathfrak{C}_g}S' = \dim S' + 1$, and $p_1(\mathfrak{C}\times_{\mathfrak{C}_g}S') = \mathscr{D}_1(\mathfrak{C}_g \times_{\mathbb{M}_g}S')$ which has dimension $1 + \dim S'$.
\end{proof}

\subsection{Proof of the New Gap Principle}\label{SubsectionProofNGP}
We are ready to prove the New Gap Principle, Theorem~\ref{ThmNGP}.

The proof is by applying the following Proposition~\ref{PropNGPprep} to the $\mathfrak{C} \subseteq \mathfrak{I}_{\mathfrak{C}_g} \rightarrow \mathfrak{C}_g$ constructed in $\mathsection$\ref{SubsectionParaSpAJ}. Proposition~\ref{PropNGPprep} is proved, following the same line of \cite[Prop.2.3]{DGHBog}, by a combination of Proposition~\ref{PropNGPLargeCurve} and Proposition~\ref{PropUnifBog}.\footnote{Alternatively one can also prove Theorem~\ref{ThmNGP}, up to some finite set of uniformly bounded cardinality, by combining (a slight change of) \cite[Prop.7.1]{DGHUnifML} and \cite[Thm.3]{KuehneUnifMM} in the same way. Having this extra finite set does not matter for Theorem~\ref{ConjMazur}. We take the current approach to have a ``cleaner'' statement for the New Gap Principle.}

We retain the notations from the beginning of $\mathsection$\ref{SectionHtIneqEqdistr}. In particular, $S$ is a quasi-projective irreducible variety and $\pi \colon \cA \rightarrow S$ is an abelian scheme of relative dimension $g$;  $\cL$ is a relatively ample line bundle on $\cA/S$ with $[-1]^*\cL \cong \cL$, and $\cM$ is a line bundle over a compactification $\bar{S}$ of $S$. Assume all data are defined over $\IQbar$, and thus we have two height functions $\hat{h}_{\cL} \colon \cA(\IQbar) \rightarrow [0,\infty)$ from \eqref{EqFiberwiseNTHeight} and $h_{\bar{S},\cM} \colon S(\IQbar) \rightarrow \R$ given by the Height Machine \eqref{EqHtMachine}.

\medskip
Let $\iota \colon \cA \rightarrow \mathfrak{A}_g$ be the modular map from \eqref{EqModularMap2}.

\begin{prop}\label{PropNGPprep}
Let $\mathfrak{C} \subseteq \cA$ be an irreducible subvariety satisfying the following properties. Each fiber $\mathfrak{C}_s$ of $\mathfrak{C} \rightarrow S$ is an irreducible curve which generates $\cA_s$ and is not a translate of an elliptic curve, and $\iota|_{\mathfrak{C}\times_S S'}$ is quasi-finite for all subvarieties $S'$ of $S$.

Then there exist constants $c_1$ and $c_2$ such that for each $s \in S(\IQbar)$, we have
\begin{equation}
\#\left\{x \in \mathfrak{C}_s(\IQbar) : \hat{h}_{\cL}(x) \le c_1 \max\{1, h_{\bar{S},\cM}(s)\} \right\} < c_2.
\end{equation}
\end{prop}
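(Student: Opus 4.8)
The plan is to obtain Proposition~\ref{PropNGPprep} by combining Proposition~\ref{PropNGPLargeCurve} and Proposition~\ref{PropUnifBog}, following the same line as \cite[Prop.2.3]{DGHBog}. The first thing to notice is that the hypotheses of Proposition~\ref{PropNGPprep} are at least as strong as those of the other two: a quasi-finite morphism is generically finite, and the remaining conditions on the fibers $\mathfrak{C}_s$ (irreducible curve, generating $\cA_s$, not a translate of an elliptic curve) are literally the same. Hence both Proposition~\ref{PropNGPLargeCurve} and Proposition~\ref{PropUnifBog} apply to $\mathfrak{C}\subseteq\cA\to S$.

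Applying Proposition~\ref{PropNGPLargeCurve} produces constants $c_1'>0$, $c_2'$ and $c_3'$, and after enlarging $c_3'$ if necessary we may and do assume $c_3'>0$, such that
\[
\#\bigl\{x\in\mathfrak{C}_s(\IQbar):\hat{h}_{\cL}(x)\le c_1'\max\{1,h_{\bar{S},\cM}(s)\}-c_3'\bigr\}<c_2'\qquad\text{for all }s\in S(\IQbar),
\]
while Proposition~\ref{PropUnifBog} produces constants $c_2''$ and $c_3''>0$ with
\[
\#\bigl\{x\in\mathfrak{C}_s(\IQbar):\hat{h}_{\cL}(x)\le c_3''\bigr\}<c_2''\qquad\text{for all }s\in S(\IQbar).
\]
Now set $T:=\max\{1,\,2c_3'/c_1'\}$ and put
\[
c_1:=\min\bigl\{c_1'/2,\ c_3''/T\bigr\}>0,\qquad c_2:=c_2'+c_2''.
\]
The one elementary point to check is the inequality $c_1\,t\le\max\{c_1'\,t-c_3',\ c_3''\}$ for every real $t\ge1$: if $t\ge T$ then $c_1\,t\le(c_1'/2)\,t\le c_1'\,t-c_3'$ since $(c_1'/2)\,t\ge(c_1'/2)\,T\ge c_3'$; and if $t<T$ then $c_1\,t<c_1\,T\le c_3''$. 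Applying this with $t=\max\{1,h_{\bar{S},\cM}(s)\}$ shows that the set whose cardinality we wish to bound in Proposition~\ref{PropNGPprep} is contained in the union of the two sets displayed above, and therefore has cardinality less than $c_2'+c_2''=c_2$. This is the desired bound.

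There is no genuine obstacle in this last argument: it is pure bookkeeping of constants, the only mild subtlety being the case distinction according to whether the modular height $h_{\bar{S},\cM}(s)$ is large (where Proposition~\ref{PropNGPLargeCurve} already suffices) or small (where Proposition~\ref{PropUnifBog} is used to absorb the spurious additive constant $c_3'$). All the real content has been outsourced to the two input propositions, namely to the height inequality of Dimitrov--Gao--Habegger (Theorem~\ref{ThmHtIneq}) behind Proposition~\ref{PropNGPLargeCurve}, and to K\"{u}hne's equidistribution theorem, used through Corollary~\ref{CorEquidistr}, behind Proposition~\ref{PropUnifBog}.
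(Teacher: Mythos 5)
Your proof is correct and takes essentially the same route as the paper: the paper likewise combines Proposition~\ref{PropNGPLargeCurve} and Proposition~\ref{PropUnifBog}, chooses the identical constant $c_1=\min\{c_1'/2,\ c_3''/\max\{1,2c_3'/c_1'\}\}$, and verifies the same elementary inequality by the same case distinction on whether $\max\{1,h_{\bar{S},\cM}(s)\}$ exceeds $\max\{1,2c_3'/c_1'\}$. The only cosmetic difference is that the paper takes $c_2=\max\{c_2',c_2''\}$ (since the case split depends only on $s$, for each fixed $s$ the set is contained in just one of the two input sets) whereas you take the sum $c_2'+c_2''$, which is equally valid.
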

\begin{proof}
By Proposition~\ref{PropNGPLargeCurve}, there exist constants $c_1',c_2'$ and $c_3'$ such that for each $s \in S(\IQbar)$, we have
\begin{equation}\label{EqSet1}
\{x \in \mathfrak{C}_s(\IQbar) : \hat{h}_{\cL}(x) \le c_1' \max\{1, h_{\bar{S},\cM}(s)\} - c_3'\}
\end{equation}
has cardinality $< c_2'$.

By Proposition~\ref{PropUnifBog}, there exist constants $c_2''$ and $c_3''$ such that for each $s \in S(\IQbar)$, we have
\begin{equation}\label{EqSet2}
\{x \in \mathfrak{C}_s(\IQbar): \hat{h}_{\cL}(x) \le c_3''\}
\end{equation}
has cardinality $< c_2''$.

Now set
\begin{equation}
c_1 := \min\left\{\frac{c_3''}{\max\{1,2c_3'/c_1'\}}, \frac{c_1'}{2}\right\} \quad \text{ and } \quad c_2:= \max\{c_2',c_2''\}.
\end{equation}
We will prove that these are the desired constants.

To prove this, it suffices to prove the following claim.

\noindent\textbf{Claim:} If $x \in \mathfrak{C}_s(\IQbar)$ satisfies $\hat{h}_{\cL}(x) \le c_1\max\{1, h_{\bar{S},\cM}(s)\}$, then $x$ is in either the set \eqref{EqSet1} or the set \eqref{EqSet2}.

Let us prove this claim. 
Suppose $x \in \mathfrak{C}_s(\IQbar)$ is not in \eqref{EqSet1} or \eqref{EqSet2}, \textit{i.e.}, $\hat{h}_{\cL}(x) > c_1' \max\{1, h_{\bar{S},\cM}(s)\} - c_3'$ and $\hat{h}_{\cL}(x) > c_3''$. We wish to prove $\hat{h}_{\cL}(x) > c_1 \max\{1, h_{\bar{S},\cM}(s)\}$.

We split up to two cases on whether $\max\{1, h_{\bar{S},\cM}(s)\} \le \max\{1,2c_3'/c_1'\}$.

In the first case, \textit{i.e.}, $\max\{1, h_{\bar{S},\cM}(s)\} \le \max\{1,2c_3'/c_1'\}$, we have
\[
\hat{h}_{\cL}(x) > c_3'' \ge c_3'' \frac{\max\{1, h_{\bar{S},\cM}(s)\}}{\max\{1,2c_3'/c_1'\}} = \frac{c_3''}{\max\{1,2c_3'/c_1'\}} \max\{1, h_{\bar{S},\cM}(s)\} \ge c_1 \max\{1, h_{\bar{S},\cM}(s)\} .
\]
In the second case,  \textit{i.e.}, $\max\{1, h_{\bar{S},\cM}(s)\} > \max\{1,2c_3'/c_1'\}$, we have $c_1' \max\{1, h_{\bar{S},\cM}(s)\}  - c_3' \ge (c_1'/2) \max\{1, h_{\bar{S},\cM}(s)\}$ and hence
\[
\hat{h}_{\cL}(x) > \frac{c_1'}{2}\max\{1, h_{\bar{S},\cM}(s)\} \ge c_1 \max\{1, h_{\bar{S},\cM}(s)\}.
\]
Hence we are done.
\end{proof}

Now we are ready to prove the New Gap Principle by applying Proposition~\ref{PropNGPprep} to the family constructed in $\mathsection$\ref{SubsectionParaSpAJ}.
\begin{proof}[Proof of the New Gap Principle (Theorem~\ref{ThmNGP})]

Let $\mathfrak{C} \subseteq \mathfrak{I}_{\mathfrak{C}_g} \rightarrow \mathfrak{C}_g$ be as in Proposition~\ref{PropTautoAbelJacobi}.  Then we are in the situation of Proposition~\ref{PropNGPprep}, with $(\cA \rightarrow S) = (\mathfrak{I}_{\mathfrak{C}_g} \rightarrow \mathfrak{C}_g)$. 
 Let us now explain how the line bundles are chosen.

Recall from $\mathsection$\ref{SubsectionHtUnivFamily} that we have fixed a line bundle $\mathfrak{L}$ on $\mathrm{Jac}(\mathfrak{C}_g/\mathbb{M}_g)$ ample over $\mathbb{M}_g$ such that $[-1]^*\mathfrak{L} \cong \mathfrak{L}$, and an ample line bundle $\mathfrak{M}$ over $\bar{\mathbb{M}_g}$, a compactification of $\mathbb{M}_g$. Both line bundles are defined over $\IQbar$.

Use the notations in the diagram \eqref{EqP1}.

Set $\cL:= p_1^*\mathfrak{L}$ for the natural projection $p_1 \colon \mathfrak{J}_{\mathfrak{C}_g} \rightarrow \mathrm{Jac}(\mathfrak{C}_g/\mathbb{M}_g)$ from \eqref{EqP1}. Then $\cL$ is a relatively ample line bundle on $\cA/S$ such that $[-1]^*\cL \cong \cL$.

The morphism $\pi \colon \mathfrak{C}_g \rightarrow \mathbb{M}_g$ extends to a morphism $\bar{\pi} \colon \bar{\mathfrak{C}_g} \rightarrow \bar{\mathbb{M}_g}$ defined over $\IQbar$, with $\bar{\mathfrak{C}_g}$ a suitable compactification of $\mathfrak{C}_g$. Set $\cM:=\bar{\pi}^*\mathfrak{M}$.

Now we are ready to invoke Proposition~\ref{PropNGPprep} and get the following conclusion. 
For each $P \in \mathfrak{C}_g(\IQbar)$, we have
\begin{equation}\label{EqNGPM2}
\#\left\{x \in \mathfrak{C}_P(\IQbar) : \hat{h}_{\cL}(x) \le c_1\max\{1, h_{\bar{\mathfrak{C}_g}, \cM}(P)\} \right\} < c_2.
\end{equation}
Moreover the fiber of $\mathfrak{I}_{\mathfrak{C}_g} \rightarrow \mathfrak{C}_g$ over $P \in \mathfrak{C}_g(\IQbar)$, denoted by $(\mathfrak{I}_{\mathfrak{C}_g})_P$, satisfies that $p_1((\mathfrak{I}_{\mathfrak{C}_g})_P) = \mathrm{Jac}(\mathfrak{C}_g/\mathbb{M}_g)_{\pi(P)}$ (the fiber of $\mathrm{Jac}(\mathfrak{C}_g/\mathbb{M}_g) \rightarrow \mathbb{M}_g$ over $\pi(P)$). As $\mathrm{Jac}(\mathfrak{C}_g/\mathbb{M}_g)_{\pi(P)} = \mathrm{Jac}(\mathfrak{C}_{\pi(P)})$ for $\mathfrak{C}_{\pi(P)}$ defined below \eqref{EqFiberLargeCurve}, we then have $\hat{h}_{\cL}|_{(\mathfrak{I}_{\mathfrak{C}_g})_P} = \hat{h}_{p_1^*\mathfrak{L}}|_{(\mathfrak{I}_{\mathfrak{C}_g})_P} = \hat{h}_{\mathfrak{L}|_{\mathrm{Jac}(\mathfrak{C}_{\pi(P)})}}$.

\medskip


For each $s \in \mathbb{M}_g(\IQbar)$ and each $P \in \mathfrak{C}_s(\IQbar)$, we have $P \in \mathfrak{C}_g(\IQbar)$ with $\pi(P) = s$. By \eqref{EqFiberLargeCurve}, we have $\mathfrak{C}_P = \mathfrak{C}_s - P$. So each $x \in \mathfrak{C}_P(\IQbar)$ is $Q-P$ with some $Q \in \mathfrak{C}_s(\IQbar)$. We have seen $\hat{h}_{\cL}|_{(\mathfrak{I}_{\mathfrak{C}_g})_P} = \hat{h}_{\mathfrak{L}|_{\mathrm{Jac}(\mathfrak{C}_{s})}}$ from the last paragraph. Moreover $h_{\bar{\mathfrak{C}_g}, \cM}(P) = h_{\bar{\mathfrak{C}_g}, \bar{\pi}^*\mathfrak{M}}(P) = h_{\bar{\mathbb{M}_g}, \mathfrak{M}}(s)$. Thus \eqref{EqNGPM2} becomes $\#\left\{Q-P \in \mathfrak{C}_s(\IQbar) - P : \hat{h}_{\mathfrak{L}}(Q-P) \le c_1 \max\{1, h_{\bar{\mathbb{M}_g},\mathfrak{M}}(s)\} \right\} < c_2$. So
\[
\#\left\{Q \in \mathfrak{C}_s(\IQbar) : \hat{h}_{\mathfrak{L}}(Q-P) \le c_1 \max\{1, h_{\bar{\mathbb{M}_g},\mathfrak{M}}(s)\} \right\} < c_2,
\]
which is precisely the desired cardinality bound.
\end{proof}

One can recover the weaker statements \cite[Prop.7.1]{DGHUnifML} and \cite[Thm.3]{KuehneUnifMM} using the same argument: instead of Proposition~\ref{PropNGPprep}, it suffices to apply the weaker Proposition~\ref{PropNGPLargeCurve} (resp. Proposition~\ref{PropUnifBog}) to the family constructed in $\mathsection$\ref{SubsectionParaSpAJ} in order to get \cite[Prop.7.1]{DGHUnifML} (resp. \cite[Thm.3]{KuehneUnifMM}).

\subsection{Proof of Uniform Mordell--Lang}\label{SubsectionProofUML}
We are now ready to prove Theorem~\ref{ConjMazur} by a packing argument using Theorem~\ref{ThmLargePointsFamily} and Theorem~\ref{ThmNGP}.

A specialization argument using Masser's result \cite{masser1989specializations} reduces this theorem to $F = \overline{\Q}$; see \cite[Lem.3.1]{DGHBog}. From now on, we may and do assume $F = \IQbar$.

Let $C$ be a smooth curve of genus $g$ defined over $\IQbar$, $P_0 \in C(\IQbar)$ and $\Gamma$ a subgroup of $\mathrm{Jac}(C)(\IQbar)$ of rank $\rho$. Then there exists $s \in \mathbb{M}_g(\IQbar)$ which parametrizes the curve $C$. Thus the fiber of $\pi \colon \mathfrak{C}_g \rightarrow  \mathbb{M}_g$ over $s$, $\mathfrak{C}_s$, is isomorphic to $C$ over $\IQbar$. We thus view $P_0 \in \mathfrak{C}_s(\IQbar) \subseteq \mathfrak{C}_g(\IQbar)$, and $\Gamma$ a subgroup of $\mathrm{Jac}(\mathfrak{C}_s)(\IQbar)$ of rank $\rho$. Notice that $\pi(P_0) = s$.

There exists a surjective quasi-finite \'{e}tale morphism $S \rightarrow \mathbb{M}_g$ such that $\mathfrak{C}_g \times_{\mathbb{M}_g} S \rightarrow S$ admits a section. This induces a morphism $\sigma \colon S \rightarrow \mathfrak{C}_g$. Thus 
we can construct the following morphism, which should be seen as the Abel--Jacobi embedding in family, $\mathfrak{C}_g \times_{\mathbb{M}_g}S \xrightarrow{(\mathrm{id},\sigma)} \mathfrak{C}_g \times_{\mathbb{M}_g} \mathfrak{C}_g \xrightarrow{\mathscr{D}_1} \mathrm{Jac}(\mathfrak{C}_g/\mathbb{M}_g)$. For each $s \in \mathbb{M}_g(\IQbar)$, an irreducible component of the image (which we call $\mathfrak{C}$) is $\mathfrak{C}_s - P_s$ for some $P_s \in \mathfrak{C}_s(\IQbar)$.

Apply Theorem~\ref{ThmLargePointsFamily} to $(\cA \rightarrow S) = (\mathrm{Jac}(\mathfrak{C}_g/\mathbb{M}_g) \rightarrow \mathbb{M}_g)$, $\mathfrak{C}$, $\mathfrak{L}$ and $\mathfrak{M}$. Then we have
\[
\#\left\{P - P_s \in (\mathfrak{C}_s-P_s)(\IQbar) \cap \Gamma  : \hat{h}_{\mathfrak{L}}(P - P_s) > c\max\{1, h_{\bar{\mathbb{M}_g}, \mathfrak{M}}(s)\} \right\} \le  c^{\rho}
\]
 for some constant $c$ depending only on the family and the line bundles. 

 Set  $ R:=  (c \max\{1, h_{\bar{\mathbb{M}_g}, \mathfrak{M}}(s)\})^{1/2}$.
 
 \medskip
 
 We start by the case where $P_0 = P_s$. Then it remains to prove
 \begin{equation}\label{EqSubsetCenterBalls}
\#\left\{P - P_s \in (\mathfrak{C}_s-P_s)(\IQbar) \cap \Gamma : \hat{h}_{\mathfrak{L}}(P - P_s) \le c\max\{1, h_{\bar{\mathbb{M}_g}, \mathfrak{M}}(s)\} \right\} \le c^{1+\rho}
\end{equation}
 up to increasing $c$.
 
Let $c_1$ and $c_2$ be as in Theorem~\ref{ThmNGP}. Set $r = (c_1\max\{1, h_{\bar{\mathbb{M}_g}, \mathfrak{M}}(s)\})^{1/2} / 2$. 
Consider the real vector space $\Gamma\otimes \R$ endowed with the Euclidean norm $| \cdot | = \hat{h}_{\mathfrak{L}}^{1/2}$. 
By an elementary ball packing argument, any subset of $\Gamma \otimes\R$ contained in a closed ball of radius $R$ centered at $0$ is covered by at most $(1+2R/r)^{\rho}$ closed balls of radius $r$ centered at the elements $P - P_s$ of the given subset \eqref{EqSubsetCenterBalls}; see \cite[Lem.6.1]{Remond:Decompte}. Thus the number of balls in the covering is at most $(1+4\sqrt{c c_1^{-1}})^{\rho}$. 
 But each closed ball of radius $r$ centered at some $P - P_s$ in \eqref{EqSubsetCenterBalls} contains at most $c_2$ elements by Theorem~\ref{ThmNGP}. So \eqref{EqSubsetCenterBalls} contains at most $c_2 (1+4\sqrt{c c_1^{-1}})^{\rho} \le c^{1+\rho}$ elements for a suitable $c$. So we are done for this case.

\medskip

For arbitrary $P_0$, let $\Gamma'$ be the subgroup of $\mathrm{Jac}(\mathfrak{C}_s)(\IQbar)$ generated by $\Gamma$ and $P_0-P_s$. Then $\mathrm{rk}\Gamma' \le \rho + 1$. For any $P \in C(\IQbar) - P_0$, we have $P + P_0-P_s \in \mathfrak{C}_s(\IQbar) - P_s$. So $\#(\mathfrak{C}_s - P_0)(\IQbar) \cap \Gamma \le \# (\mathfrak{C}_s - P_s)(\IQbar) \cap \Gamma'$, which is $\le c^{2+\rho} \le (c^2)^{1+\rho}$ by the previous case. So we are done by replacing $c$ with $c^2$.


\section{Further aspects}\label{SectionOtherAspects}
\subsection{Relative Bogomolov Conjecture}\label{SubsectionRelBog}
In this subsection, we state the \textit{Relative Bogomolov Conjecture} and explain how it induces \cite[Thm.3]{KuehneUnifMM}, known as the \textit{Uniform Bogomolov Conjecture} for curves embedded into Jacobians.

The Relative Bogomolov Conjecture is a folklore conjecture. The formulation we state here is taken from \cite[Conj.1.1]{DGHBog}.

Let $S$ be an irreducible quasi-projective variety. Let $\pi \colon \cA \rightarrow S$ be an abelian scheme
of relative dimension $g \ge 1$. Let $\cL$ be a relatively
ample line bundle on $\cA/S$ such that $[-1]^*\cL\cong \cL$. Assume that $S$, $\pi$ and $\cL$ are all defined over $\IQbar$. We thus have a fiberwise N\'{e}ron--Tate height $\hat{h}_{\cL} \colon \cA(\IQbar) \rightarrow [0,\infty)$ as defined in \eqref{EqFiberwiseNTHeight}.

We will use the following notation. For any subvariety $X$ of
$\cA$ that
dominates $S$, denote by $X_{\overline{\eta}}$ the geometric generic
fiber of $X$.
In particular, $\cA_{\overline{\eta}}$ is an abelian variety  over an
algebraically closed field.

\begin{conj}[Relative Bogomolov Conjecture]
  \label{ConjRelBog}
  Let $X$ be a subvariety of $\cA$ defined over
  $\IQbar$ that dominates $S$. Assume that
  $X_{\overline{\eta}}$ is irreducible
  and not contained in any proper
  algebraic subgroup of $\cA_{\overline{\eta}}$. If $\codim_{\cA} X >
  \dim S$, then there exists $\epsilon > 0$ such that
  \[ X(\epsilon; \cL) := \{ x \in X(\IQbar) : \hat{h}_{\cL}(x) \le
    \epsilon \}
  \]
  is not Zariski dense in $X$.
\end{conj}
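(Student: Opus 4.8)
Since the final statement is a conjecture rather than a theorem, what follows is the strategy one would pursue and the point at which it currently stalls. The natural plan is to argue by contradiction and to lift Ullmo--Zhang's proof of the (absolute) Bogomolov conjecture \cite{Ullmo, ZhangEquidist} to the relative setting, using K\"uhne's equidistribution theorem (Theorem~\ref{ThmEquidistr}, via Corollary~\ref{CorEquidistr}) in place of the classical equidistribution. So suppose $X(\epsilon;\cL)$ is Zariski dense in $X$ for every $\epsilon>0$. The first, and decisive, step is to reduce to the case where $X$ is a \emph{non-degenerate} subvariety of $\cA$ in the sense of Definition~\ref{DefnNonDeg}. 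This is morally what the hypotheses of Conjecture~\ref{ConjRelBog} are meant to guarantee: the condition $\codim_{\cA}X>\dim S$ is the same as $\codim_{\cA_{\overline\eta}}X_{\overline\eta}>\dim S$, and together with the irreducibility of $X_{\overline\eta}$ and the assumption that $X_{\overline\eta}$ lies in no proper subgroup (and, after a preliminary quotient by the toric/abelian part of $\mathrm{Stab}_{\cA_{\overline\eta}}(X_{\overline\eta})$, a finiteness condition on the stabilizer) these are exactly the inputs of the non-degeneracy criterion \cite[Thm.1.1]{GaoBettiRank}, of which Theorem~\ref{ThmNonDegXA} is an instance.

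Granting that $X$ is non-degenerate, one runs the argument exactly as in the proof of Proposition~\ref{PropUnifBog}. Fix $M$ large and form the Faltings--Zhang morphism $\mathscr{D}^{\cA}_M\colon\cA^{[M+1]}\to\cA^{[M]}$ of \eqref{EqFaltingsZhangGeneral} and the generically injective map $\mathscr{D}=(\mathrm{id},\mathscr{D}^{\cA}_M)$ on $X\times_S X^{[M+1]}$; by the non-degeneracy of $X$, Lemma~\ref{LemmaNonDegFiberProd} and Theorem~\ref{ThmNonDegXA}, both $X\times_S X^{[M+1]}$ and its image under $\mathscr{D}$ are non-degenerate. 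Zariski density of the small points of $X$ yields a generic small sequence on each of these two varieties, so Corollary~\ref{CorEquidistr} applies to both: off a proper Zariski closed set, Galois-orbit averages of a fixed $f$ and of $f\circ\mathscr{D}$ match their integrals against the canonical measures $\mu_1$ on the source and $\mu_2$ on the image. Since those orbit averages coincide (because $\mathscr{D}$ is generically injective), one gets $\mathscr{D}^*\mu_2=\mu_1$; but $\mathscr{D}^{\cA}_M$ contracts the fibrewise diagonal, so $\mathscr{D}$ has positive-dimensional fibres through a suitable diagonal point, whence $\mathscr{D}^*\mu_2$ vanishes at a point where $\mu_1$ — a top power of the Betti form of a non-degenerate variety — does not. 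This is precisely the computation of Step~2 in the proof of Proposition~\ref{PropUnifBog}, and the resulting contradiction shows that $X(\epsilon;\cL)$ cannot be Zariski dense.

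The obstacle is the reduction in the first paragraph. The converse of \eqref{EqNonDegNaive} is false in general: it holds when the geometric generic fibre $\cA_{\overline\eta}$ is a \emph{simple} abelian variety (\cite[Thm.1.4(i)(a)]{GaoBettiRank}), but when $\cA_{\overline\eta}$ is isogenous to a non-trivial product — or when the modular map $\iota|_X$ is not generically finite, as happens for families that are isotrivial in the relevant direction — a variety $X$ satisfying all the hypotheses of Conjecture~\ref{ConjRelBog} may still be degenerate, i.e.\ $X^{\mathrm{deg}}=X$, and then Theorem~\ref{ThmEquidistr} yields nothing. The next move is to strip off the degeneracy locus and induct: $X^{\mathrm{deg}}$ is Zariski closed (Theorem~\ref{ThmDegLocusZarClosed}), and when $\dim S=1$ it is a finite union of generically special subvarieties (Definition~\ref{DefinitionSpecialSubvariety}) — torsion cosets translated by the $\overline{\C(S)}/\C$-trace part of $\cA$. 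For the trace pieces one is thrown back onto a Bogomolov-type statement over a base along which the geometry is ``isotrivial up to torsion'', where no unconditional equidistribution is presently known. This is exactly where the plan breaks down, and why the Relative Bogomolov Conjecture is still open; the cases that can be proved (the non-degenerate instance needed for \cite{KuehneUnifMM}, and the special cases of \cite{KuehneRBC}) are precisely those in which this reduction to non-degeneracy, or a substitute for it, can be carried out.
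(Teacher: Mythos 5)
The statement you were asked about is not proved in the paper: it is the Relative Bogomolov Conjecture, stated verbatim as \cite[Conj.1.1]{DGHBog}, and the survey says explicitly that ``in general Conjecture~\ref{ConjRelBog} is still open.'' You correctly recognize this, so there is no proof in the paper to compare yours against; the only argument the paper attaches to this conjecture is the \emph{implication} Conjecture~\ref{ConjRelBog} $\Rightarrow$ Proposition~\ref{PropUnifBog} (Proposition~\ref{LemmaRelBogUnifBog}), which runs in the opposite direction from what you sketch and uses the height-inequality/Lemma~\ref{LemmaNogaAlon} packing pattern rather than equidistribution.

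Your strategy-plus-obstruction discussion is consistent with the paper's framing: the Ullmo--Zhang argument via Corollary~\ref{CorEquidistr} does go through once one has non-degeneracy, and the genuine obstacle is exactly that the hypotheses of the conjecture do not imply non-degeneracy (the converse of \eqref{EqNonDegNaive} fails in general, and the degeneracy locus can be all of $X$, with the generically special pieces --- trace parts plus torsion cosets --- being where no unconditional equidistribution input exists). Two small imprecisions: there is no ``toric part'' of the stabilizer in an abelian scheme (the neutral component of $\mathrm{Stab}_{\cA_{\overline\eta}}(X_{\overline\eta})$ is an abelian subvariety, and the relevant weakened hypothesis is (c$'$), that it lie in the $\overline{\C(\eta)}/\C$-trace); and for a higher-dimensional $X$ the generic injectivity of $\mathscr{D}^{\cA}_M$ on $X^{[M+1]}$ is itself a nontrivial point requiring the finite-stabilizer reduction, not just the non-degeneracy. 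Neither affects your main point, which is that the reduction to the non-degenerate case is precisely what is missing and why the conjecture remains open.
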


The name \textit{Relative Bogomolov Conjecture} is reasonable: the
same statement with $\epsilon = 0$ is precisely the relative
Manin--Mumford conjecture proposed by Pink~\cite[Conj.6.2]{Pink}
and  Zannier~\cite{ZannierBook}, which is proved when
$\dim X=1$ in a series of papers \cite{MasserZannierTorsionPointOnSqEC, MASSER2014116, MasserZannierRelMMSimpleSur, CorvajaMasserZannier2018, MasserZannierRMMoverCurve}. The Betti map is heavily used in these works.

The classical Bogomolov conjecture, proved by Ullmo \cite{Ullmo} and
S.~Zhang \cite{ZhangEquidist}, is precisely
Conjecture~\ref{ConjRelBog} for $\dim S = 0$. When $\dim S = 1$ and
$X$ is the image of a section, Conjecture~\ref{ConjRelBog} is
equivalent to  S.~Zhang's conjecture in his 1998 ICM note
\cite[$\mathsection$4]{zhang1998small} if $\cA_{\overline{\eta}}$ is
simple and is proved by DeMarco--Mavraki
\cite[Thm.1.4]{DeMarcoMavraki} if $\cA \rightarrow S$ is isogenous
to a fiber product of elliptic surfaces. The latter proof 
was simplified and strengthened by DeMarco--Mavraki in \cite{DeMarcoMavraki21}: in \cite{DeMarcoMavraki} the authors reduced their Theorem~1.4 to the case of torsion points treated by \cite{MASSER2014116}, whereas in \cite{DeMarcoMavraki21} the authors proved this result (among other generalizations \cite[Thm.1.5]{DeMarcoMavraki21}) directly. 

K\"{u}hne \cite{KuehneRBC} recently proved Conjecture~\ref{ConjRelBog} if $\cA \rightarrow S$ is isogenous
to a fiber product of elliptic surfaces. In general Conjecture~\ref{ConjRelBog} is still open. Notice that the proof of Proposition~\ref{PropUnifBog} can be adapted to show that Conjecture~\ref{ConjRelBog} holds true for $\mathfrak{C}^{[m]}$ for some suitable $m \gg 1$; see the conclusion of Step~3.

\medskip
Using the proof pattern of Proposition~\ref{PropNGPLargeCurve}, it is not hard to show that the Relative Bogomolov Conjecture implies the Uniform Bogomolov Conjecture for curves embedded into Jacobians \cite[Thm.3]{KuehneUnifMM}.

\begin{prop}\label{LemmaRelBogUnifBog}
Conjecture~\ref{ConjRelBog} implies Proposition~\ref{PropUnifBog}
, and hence \cite[Thm.3]{KuehneUnifMM}
\footnote{See the end of $\mathsection$\ref{SubsectionProofNGP}.}
.
\end{prop}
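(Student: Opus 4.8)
The plan is to re-run the argument of Proposition~\ref{PropNGPLargeCurve}, with the height inequality (Theorem~\ref{ThmHtIneq}) replaced by an application of Conjecture~\ref{ConjRelBog} to a suitable fibered power of $\mathfrak{C}$. As in the proofs of Proposition~\ref{PropNGPLargeCurve} and Proposition~\ref{PropUnifBog}, I would proceed by induction on $\dim S$, the base case being contained in the inductive step. First, exactly as in the footnote to Theorem~\ref{ThmNonDegXA}, I would replace $S$ by a quasi-finite \'etale cover $S'\to S$ and $\mathfrak{C}$ by a component $\mathfrak{C}'$ of $\mathfrak{C}\times_S S'$ so that $\mathfrak{C}'_{\bar\eta}$ is irreducible; this is harmless because all closed fibres $\mathfrak{C}_s$ are already assumed irreducible, so a uniform bound on the fibres of $\mathfrak{C}'\to S'$ gives one on the $\mathfrak{C}_s$, and all the hypotheses on $\mathfrak{C}$ persist. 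Thus we may assume $\mathfrak{C}_{\bar\eta}$ is irreducible.

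Next, note $g\ge 2$: a fibre $\mathfrak{C}_s$ that generates $\cA_s$ and is not a translate of an elliptic curve cannot live inside an abelian variety of dimension $1$. Fix an integer $M$ with $M(g-1)>\dim S$ (e.g. $M=\dim S+1$), put $X:=\mathfrak{C}^{[M]}=\mathfrak{C}\times_S\cdots\times_S\mathfrak{C}\subseteq\cA^{[M]}$, and equip $\cA^{[M]}/S$ with the symmetric relatively ample line bundle $\cL^{\boxtimes M}=p_1^*\cL\otimes\cdots\otimes p_M^*\cL$. Then $X$ dominates $S$; $X_{\bar\eta}=(\mathfrak{C}_{\bar\eta})^{[M]}$ is irreducible; $\codim_{\cA^{[M]}}X=M(g-1)>\dim S$; and $X_{\bar\eta}$ is not contained in any proper algebraic subgroup $B$ of $\cA^{[M]}_{\bar\eta}$. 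For the last point: fixing one coordinate slot $i$ and points of $\mathfrak{C}_{\bar\eta}$ in the remaining slots, the differences of the corresponding points of $X_{\bar\eta}\subseteq B$ show that $B$ contains $\{x-x' : x,x'\in\mathfrak{C}_{\bar\eta}\}$ in slot $i$ and $0$ elsewhere; since $\mathfrak{C}_{\bar\eta}$ generates $\cA_{\bar\eta}$, this forces the full $i$-th factor into $B$, and letting $i$ vary gives $B=\cA^{[M]}_{\bar\eta}$. Hence Conjecture~\ref{ConjRelBog}, applied to $X\subseteq\cA^{[M]}\to S$ with $\cL^{\boxtimes M}$, yields a constant $\epsilon>0$ (genuinely a constant, since $X$ is a fixed variety) and a proper Zariski closed subset $Z:=\overline{X(\epsilon;\cL^{\boxtimes M})}^{\mathrm{Zar}}\subsetneq X$, defined over $\IQbar$, containing every $\mathbf{x}\in X(\IQbar)$ with $\hat h_{\cL^{\boxtimes M}}(\mathbf{x})\le\epsilon$.

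From here the argument is the fibrewise finish of Step~4 in the proof of Proposition~\ref{PropUnifBog}. Each irreducible component of $\overline{S\setminus\pi(X\setminus Z)}$ has dimension $<\dim S$, so by the induction hypothesis it suffices to prove the statement with $S$ replaced by $S\setminus\overline{S\setminus\pi(X\setminus Z)}$; thus we may assume $Z_s\ne X_s=\mathfrak{C}_s^{M}$ for all $s\in S(\IQbar)$. Apply Lemma~\ref{LemmaNogaAlon} with $V=\cA_s$, $L=\cL_s$, $C=\mathfrak{C}_s$ and the closed set $Z_s\subseteq\cA_s^{M}$: since the degrees $\deg_{\cL_s}(\mathfrak{C}_s)$, $\deg_{\cL_s}(\cA_s)$ are constant in the flat family and $\deg_{\cL_s^{\boxtimes M}}(Z_s)$ is uniformly bounded (Noetherian induction), there is a constant $c_2''$, independent of $s$, such that any $\Sigma\subseteq\mathfrak{C}_s(\IQbar)$ with $\#\Sigma\ge c_2''$ satisfies $\Sigma^{M}\not\subseteq Z_s$. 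Set $c_3'':=\epsilon/M$ and $\Sigma:=\{x\in\mathfrak{C}_s(\IQbar):\hat h_{\cL}(x)\le c_3''\}$. If $\#\Sigma\ge c_2''$ we obtain $x_1,\dots,x_M\in\Sigma$ with $\mathbf{x}=(x_1,\dots,x_M)\in X_s\setminus Z_s$, yet $\hat h_{\cL^{\boxtimes M}}(\mathbf{x})=\sum_{i=1}^M\hat h_{\cL}(x_i)\le Mc_3''=\epsilon$, so $\mathbf{x}\in X(\epsilon;\cL^{\boxtimes M})\subseteq Z$, a contradiction. Therefore $\#\Sigma<c_2''$, which is Proposition~\ref{PropUnifBog}; applying the latter to the family of Proposition~\ref{PropTautoAbelJacobi} then gives \cite[Thm.3]{KuehneUnifMM} as at the end of $\mathsection$\ref{SubsectionProofNGP}.

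The main thing to get right is the verification of the hypotheses of Conjecture~\ref{ConjRelBog} for the fibered power $\mathfrak{C}^{[M]}$ --- in particular the codimension bound $\codim_{\cA^{[M]}}X>\dim S$, which is exactly why one cannot take $X=\mathfrak{C}$ itself and must pass to $M$-th powers with $M$ large, and the ``not contained in a proper algebraic subgroup'' condition for the product. Everything else is a routine recycling of the packing/B\'ezout bookkeeping already present in Proposition~\ref{PropNGPLargeCurve} and Proposition~\ref{PropUnifBog}; in particular, no extra uniformity statement is needed, because the $\epsilon$ produced by the conjecture pertains to the single fixed total space $X=\mathfrak{C}^{[M]}$, not to a varying fibre.
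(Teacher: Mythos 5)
Your proof is correct and follows essentially the same route as the paper's: apply Conjecture~\ref{ConjRelBog} to the fibered power $\mathfrak{C}^{[M]}\subseteq\cA^{[M]}$ with $M(g-1)>\dim S$, push the bad locus into the base via induction on $\dim S$, and finish fibrewise with Lemma~\ref{LemmaNogaAlon}. The only difference is that you explicitly verify the hypotheses of the conjecture (irreducibility of $X_{\bar\eta}$ after an \'etale base change, non-containment in a proper algebraic subgroup via the difference trick, and the codimension count), which the paper leaves implicit; this is a welcome addition rather than a deviation.
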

\begin{proof}
We prove this proposition by induction on $\dim S$. The proof of the base step $\dim S = 0$ is contained in the induction step.

Let $\mathfrak{C} \subseteq \cA \rightarrow S$ and $\cL$ be from Proposition~\ref{PropUnifBog}. Consider the fibered  powers $\mathfrak{C}^{[M]}$, $\cA^{[M]}$ and $\cL^{\boxtimes M}$ over $S$. As $\mathfrak{C} \not= \cA$, we have
\[
\codim_{\cA^{[M]}} \mathfrak{C}^{[M]} = M(g-1) > \dim S
\]
for some $M\gg 1$. Thus we can apply Conjecture~\ref{ConjRelBog} to $\mathfrak{C}^{[M]} \subseteq \cA^{[M]} \rightarrow S$ and $\cL^{\boxtimes M}$ to conclude that
\[
\mathfrak{C}^{[M]}(\epsilon; \cL^{\boxtimes M}) := \{ \mathbf{x} \in \mathfrak{C}^{[M]}(\IQbar) : \hat{h}_{\cL^{\boxtimes M}}(\mathbf{x}) \le
    \epsilon \}
\]
is not Zariski dense in $\mathfrak{C}^{[M]}$, for some $\epsilon > 0$. 

Set $Z$ to be the Zariski closure of $\mathfrak{C}^{[M]}(\epsilon; \cL^{\boxtimes M})$. Then each irreducible component of $\overline{ S \setminus \pi(\mathfrak{C}^{[M]} \setminus Z) }$ has dimension $\le \dim S - 1$. Thus by induction hypothesis, it suffices to prove the lemma with $S$ replaced by $ S \setminus \pi(\mathfrak{C}^{[M]} \setminus Z)$. Thus we may and do assume the following:
\begin{equation}\label{EqRelBogUnifBog}
\text{For each }s \in S(\IQbar) \text{, we have }Z_s \not= \mathfrak{C}_s^{M}.
\end{equation}

By \eqref{EqRelBogUnifBog} and Lemma~\ref{LemmaNogaAlon}, there exists a constant $c_2''$ such that the following property holds. If a subset $\Sigma \subseteq \mathfrak{C}_s(\IQbar)$ has cardinality $\ge c_2''$, then $\Sigma^M \not\subseteq Z_s$.  This number $c_2''$ depends only on $M$, the degree of $\mathfrak{C}_s$, and the degree of $Z_s$. Hence $c_2''$ can be chosen to be independent of $s$.

Let $c_3'' := \epsilon /M$, and $\Sigma = \{ x \in \mathfrak{C}_s(\IQbar) : \hat{h}_{\cL}(x) \le c_3''\}$. It suffices to prove $\#\Sigma < c_2''$. Suppose not. Then there exist $x_1, \ldots, x_M \in \Sigma$ such that $\mathbf{x} := (x_1,\ldots,x_M) \not\in Z_s$. Then $\hat{h}_{\cL^{\boxtimes M}}(\mathbf{x}) = \sum \hat{h}_{\cL}(x_i) \le Mc_3'' = \epsilon$. This contradicts the definition of $Z$. Hence we are done.
\end{proof}

\subsection{High dimensional subvarieties}\label{SubsectionHighDimUnifML}
Let $F$ be a field of characteristic $0$ with $F = \overline{F}$. In this subsection, all varieties and line bundles are assumed to be defined over $F$.

Let $A$ be an abelian variety of dimension $g$, and let $L$ be an ample line bundle on $A$. By a \textit{coset} in $A$ we mean the translate of an abelian subvariety of $A$ by a point in $A(F)$.

Let $X$ be a closed irreducible subvariety. Faltings \cite{Faltings:DAAV} and Hindry \cite{Hindry:Lang} proved the following \textit{Mordell--Lang Conjecture}. 
If $\Gamma$ is a finite rank subgroup of $A(F)$, then there exist finitely many $x_1,\ldots,x_n \in X(F) \cap \Gamma$ and $B_1,\ldots,B_n$ abelian subvarieties of $A$, with $x_i + B_i \subseteq X$ and $(x_i+B_i)(F) \cap \Gamma$ not a finite set for each $i$, such that
\begin{equation}\label{EqHighDimML}
X(F) \cap \Gamma = \bigcup_{i=1}^n (x_i+B_i)(F) \cap \Gamma \coprod S
\end{equation}
for a finite set $S$. 
In particular, each $B_i$ satisfies $\dim B_i > 0$.

\medskip

\begin{conj}\label{ConjHighDimML}
$\#S \le c(g,\deg_L X , \deg_L A)^{\mathrm{rk}\Gamma + 1}$.
\end{conj}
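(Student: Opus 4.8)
The plan is to mimic, in arbitrary dimension, the deduction of Theorem~\ref{ConjMazur} from Theorem~\ref{ThmLargePointsFamily} and Theorem~\ref{ThmNGP} carried out in $\mathsection$\ref{SubsectionProofUML}. First, a specialization argument in the style of Masser (as in the curve case, $\mathsection$\ref{SubsectionProofUML}, or as in \cite{Hindry:Lang}) reduces to $F=\IQbar$. One then works over a moduli/parameter space for triples $(A,L,X)$ with $\deg_L A$ and $\deg_L X$ fixed, so that every constant produced below depends only on $g$, $\deg_L X$, $\deg_L A$; the passage between a modular height and any fixed height of $A$ is handled by the triangular inequality Theorem~\ref{ThmHtTriangularIneq}.(i), exactly as on $\mathbb{A}_{g,1}$. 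Recall from the Mordell--Lang theorem that $S=(X(\IQbar)\cap\Gamma)\setminus\bigcup_i(x_i+B_i)$ is the intrinsically defined set of isolated points, and set $\rho=\mathrm{rk}\,\Gamma$; the goal is $\#S\le c^{1+\rho}$.

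Next, split $S$ into large and small points for $\hat h_L$, with threshold $B=c_0\max\{1,h([A])\}$ for a suitable $c_0=c_0(g,\deg_L X,\deg_L A)$. For the large points one applies the natural extension of Theorem~\ref{ThmLargePointsFamily} to subvarieties of arbitrary dimension: R\'emond's quantitative versions of the Mumford and Vojta inequalities \cite{Remond:Decompte, Remond:Vojtasup}, together with the arithmetic B\'ezout theorem, give $\#\{P\in S:\hat h_L(P)>B\}\le c^{\rho}$. The point --- already implicit in \cite{Remond:Decompte} --- is that these inequalities are uniform in the height of $A$; in R\'emond's argument only the count of small points used a lower bound on the essential minimum, and that is precisely the ingredient we now replace. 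For the small points one invokes the generalized New Gap Principle Conjecture~\ref{ConjNGPHighDim}: there are $c_1,c_2$ depending only on $g,\deg_L X,\deg_L A$ such that for every $P\in A(\IQbar)$,
\[
\#\{\,Q\in S:\hat h_L(Q-P)\le c_1\max\{1,h([A])\}\,\}<c_2 .
\]
A ball-packing argument in $(\Gamma\otimes_{\Z}\R,\ \hat h_L^{1/2})$, identical to the one at the end of $\mathsection$\ref{SubsectionProofUML}, covers the ball of radius $B^{1/2}$ about $0$ by $(1+4\sqrt{c_0/c_1})^{\rho}$ balls of radius $r=(c_1\max\{1,h([A])\})^{1/2}/2$; a radius-$r$ ball about $P$ contains only $Q$ with $\hat h_L(Q-P)\le 4r^2=c_1\max\{1,h([A])\}$, hence fewer than $c_2$ points of $S$ by the gap principle. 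Thus $\#\{P\in S:\hat h_L(P)\le B\}\le c_2(1+4\sqrt{c_0/c_1})^{\rho}$, and adding the two estimates gives $\#S\le c^{1+\rho}$ after absorbing constants.

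The substance --- and the main obstacle --- is therefore the generalized New Gap Principle Conjecture~\ref{ConjNGPHighDim}, which one would prove along the lines of Propositions~\ref{PropNGPLargeCurve} and \ref{PropUnifBog}. Form the tautological family $\mathfrak{X}\subseteq\cA\to\mathcal{P}$ over a parameter space $\mathcal{P}$ of based subvarieties, with fibre $X-P$ over $(A,X,P)$, and verify that hypotheses (a), (b), (c') of Theorem~\ref{ThmNonDegXA} hold: (b) uses that $X$ generates $A$, and (c') the standing hypothesis that $X$ is not a coset. Then Theorem~\ref{ThmNonDegXA} furnishes a non-degenerate subvariety --- either $\mathfrak{X}^{[M]}$, to be fed into the height inequality Theorem~\ref{ThmHtIneq} as in Proposition~\ref{PropNGPLargeCurve}, or the pair $\mathfrak{X}^{[m]}\times_{\mathcal P}(\mathfrak{X}^{[m]})^{[M+1]}$ and $\mathscr{D}(\mathfrak{X}^{[m]}\times_{\mathcal P}(\mathfrak{X}^{[m]})^{[M+1]})$, to be fed into the equidistribution result Theorem~\ref{ThmEquidistr} as in Proposition~\ref{PropUnifBog} --- and one concludes fibrewise via Lemma~\ref{LemmaNogaAlon} and a Noetherian induction on $\dim\mathcal{P}$. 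The genuinely new difficulties relative to curves are: (i) controlling the degeneracy locus $X^{\mathrm{deg}}$ of Theorem~\ref{ThmDegLocusZarClosed} well enough to see that the isolated points of $S$ cannot accumulate on it, which requires exploiting the geometric description of $X^{\mathrm{deg}}$ in terms of generically special subvarieties; and (ii) adapting the combinatorial Lemma~\ref{LemmaNogaAlon} when $\dim X>1$, where ``curve in $V$'' must be replaced by a statement phrased directly for the higher-dimensional $X$, forcing the Noetherian induction on $\dim\mathcal{P}$ to be intertwined with the coset decomposition of $X$. A complementary route, closer to R\'emond's, would first reduce Conjecture~\ref{ConjHighDimML} to a weaker uniform statement and then establish that weaker statement; isolating the correct such reduction --- the analogue of relating a ``strong'' and a ``weak'' form --- is itself delicate.
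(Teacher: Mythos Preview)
Your overall architecture matches the paper's: the statement is a \emph{conjecture}, and the paper does not prove it but reduces it to Conjecture~\ref{ConjNGPHighDim} via the large/small split, R\'emond's inequalities (through David--Philippon \cite[Thm.6.8]{DaPh:07}) for the large points, and a packing argument for the small points. Your third paragraph, which tries to actually \emph{prove} Conjecture~\ref{ConjNGPHighDim} along the lines of Propositions~\ref{PropNGPLargeCurve} and~\ref{PropUnifBog}, goes well beyond what the paper claims; the paper leaves that conjecture open.

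There is, however, a genuine gap in your reduction. You invoke Conjecture~\ref{ConjNGPHighDim} as a \emph{cardinality} bound
\[
\#\{Q\in S:\hat h_L(Q-P)\le c_1\max\{1,h([A])\}\}<c_2,
\]
but that is not what Conjecture~\ref{ConjNGPHighDim} says. Its conclusion is only that the set \eqref{EqNGPHighDim} is contained in a \emph{proper} closed $X'\subsetneq X$ with $\deg_L X'<c_2$; the paper explicitly discusses, in the final paragraphs of $\mathsection$\ref{SubsectionHighDimUnifML}, why the naive cardinality version is false when $\dim X>1$ (e.g.\ the example $X=Y\times C\subseteq B\times J$). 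So your packing step, as written, does not go through: a ball of radius $r$ need not contain fewer than $c_2$ points of $S$.

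The missing ingredient is the \emph{induction on $\dim X$} that the paper flags just before the Proposition following Conjecture~\ref{ConjNGPHighDim2}. After the packing, the small points of $X^{\circ}\cap\Gamma$ in a given ball land in some $X'$ of bounded degree; since cosets in $X'$ are cosets in $X$, these points actually lie in $(X')^{\circ}\cap\Gamma$, i.e.\ in the ``$S$'' of the lower-dimensional components of $X'$. One then applies the inductive hypothesis (for varieties of dimension $<\dim X$ and degree $\le c_2$) to bound their number; the constants only depend on $g,\deg_L X,\deg_L A$ because the iteration terminates after at most $\dim X\le g$ steps. In particular, Lemma~\ref{LemmaNogaAlon} plays no role here --- the passage from ``contained in a bounded-degree subvariety'' to a point count is via this dimension induction, not via a product-variety combinatorial lemma.
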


This conjecture is a natural generalization of Theorem~\ref{ConjMazur}. Indeed, let $C$ be a curve of genus $g \ge 2$ and $P_0 \in C(F)$ as in Theorem~\ref{ConjMazur}. Let $\mathrm{Jac}(C)$ be the Jacobian of $C$ and view $C-P_0$ as a curve in $\mathrm{Jac}(C)$ via the Abel--Jacobi embedding based at $P_0$. As $g\ge 2$, $C-P_0$ does not contain any positive dimensional coset in $\mathrm{Jac}(C)$. Thus for $X= C-P_0$ and  $A=\mathrm{Jac}(C)$, \eqref{EqHighDimML}  becomes $(C-P_0)(F) \cap \Gamma = S$. It is a classical result that there exists a line bundle $L$ on $\mathrm{Jac}(C)$ with $\deg_L \mathrm{Jac}(C) = g!$ and $\deg_L (C-P_0) = \deg_L C = g$.\footnote{In fact, here we do not need the explicit functions in $g$. So it suffices to use the existence of the universal curve $\mathfrak{C}_g \rightarrow \mathbb{M}_g$ to conclude that both $\deg_L\mathrm{Jac}(C)$ and $\deg_L C$ can be assumed to depend only on $g$.} Hence Conjecture~\ref{ConjHighDimML} implies Theorem~\ref{ConjMazur}.

\medskip

We will see that Conjecture~\ref{ConjHighDimML} self improves to the following stronger conjecture proposed by David--Philippon \cite[Conj.1.8]{DaPh:07}.


\begin{conjbis}{ConjHighDimML}\label{ConjHighDimMLStr}
There exists a partition \eqref{EqHighDimML} such that $n+\#S \le c(g,\deg_L X, \deg_L A)^{\mathrm{rk}\Gamma+1}$.
\end{conjbis}


\medskip
Let us show that Conjecture~\ref{ConjHighDimML} self improves to Conjecture~\ref{ConjHighDimMLStr}. To do this, we recall the \textit{Ueno locus} or the \textit{Kawamata locus} defined as follows. Consider the union $\bigcup_{x+B \subseteq X}(x+B)$, where $x$ runs over $A(F)$ and $B$ runs over abelian subvarieties of $A$ with $\dim B > 0$. Bogomolov \cite[Thm.1]{BogomolovUeno} proved that this union is a closed subset of $X$. 
 Denote by $X^{\circ}$ its complement in $X$. It is not hard to check that the $S$ from \eqref{EqHighDimML} is $X^{\circ}(F) \cap \Gamma$.

Set $\Sigma(X;A)$ to be the set of abelian subvarieties $B \subseteq A$ with $\dim B > 0$ satisfying: $x+B \subseteq X$ for some $x \in A(F)$, and $B$ is maximal for this property. Then Bogomolov \cite[Thm.1]{BogomolovUeno} says that $\Sigma(X;A)$ is a finite set.

\begin{lemma}\label{LemmaConjHighDimMLStrWeakEqui}
If Conjecture~\ref{ConjHighDimML} holds true for all $X$  (in addition to $\Gamma$, $A$ and $L$), then Conjecture~\ref{ConjHighDimMLStr} also holds true.
\end{lemma}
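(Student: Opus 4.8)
The plan is in two parts: the finite set $S$ is controlled by Conjecture~\ref{ConjHighDimML} for $X$ itself, and the number $n$ of cosets is controlled by pushing forward to quotient abelian varieties and applying Conjecture~\ref{ConjHighDimML} there. So, assuming $F=\bar F$ and writing $\rho=\mathrm{rk}\Gamma$, I would first normalise the Faltings--Hindry decomposition \eqref{EqHighDimML}: discarding cosets that are contained in other cosets of the decomposition, one may assume the $x_i+B_i$ are pairwise distinct \emph{maximal} cosets of $X$ (no positive-dimensional coset of $X$ properly contains $x_i+B_i$), with each $(x_i+B_i)(F)\cap\Gamma$ Zariski dense in $x_i+B_i$; as recalled above, the residual set is then $S=X^{\circ}(F)\cap\Gamma$. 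Conjecture~\ref{ConjHighDimML} applied to $X$ gives $\#S\le c(g,\deg_L X,\deg_L A)^{\rho+1}$, so it suffices to bound $n$ by an expression of the same shape; then $n+\#S$ is bounded, and a harmless enlargement of the constant (using $\rho+1\ge 1$) yields Conjecture~\ref{ConjHighDimMLStr}.

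For the bound on $n$ the engine is the following descent step. Let $B\subseteq A$ be a positive-dimensional abelian subvariety, $q\colon A\to A_B:=A/B$ the quotient, $\Gamma_B:=q(\Gamma)$ (of rank $\le\rho$), and let $X_B\subseteq A_B$ be the reduced closed subvariety with $q^{-1}(X_B)=\{a\in A:a+B\subseteq X\}$; fix an ample $L_B$ on $A_B$ so that $\deg_{L_B}A_B$ and $\deg_{L_B}X_B$ are bounded in terms of $g$, $\deg_L X$, $\deg_L A$ (standard behaviour of degrees under a quotient map of abelian varieties). The key point is that if $x+B\subseteq X$ is a maximal coset of $X$ then $q(x)\in X_B^{\circ}(F)$: otherwise $q(x)$ lies in the Ueno locus of $X_B$, so there is a positive-dimensional abelian subvariety $B'\subseteq A_B$ with $q(x)+B'\subseteq X_B$, and then $x+q^{-1}(B')\subseteq q^{-1}(X_B)\subseteq X$ with $q^{-1}(B')$ an abelian subvariety strictly containing $B$, contradicting maximality. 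Since also $q(x_i)\in\Gamma_B$ whenever $(x_i+B)(F)\cap\Gamma\neq\emptyset$, and since the $q(x_i)$ attached to distinct cosets $x_i+B$ are distinct, the number of $i$ with $B_i=B$ is at most $\#(X_B^{\circ}(F)\cap\Gamma_B)$; this last set is precisely the residual set of the decomposition \eqref{EqHighDimML} for $(X_B,A_B,\Gamma_B)$, so Conjecture~\ref{ConjHighDimML} applied to $X_B$ bounds it by $c(\dim A_B,\deg_{L_B}X_B,\deg_{L_B}A_B)^{\rho+1}\le c_1(g,\deg_L X,\deg_L A)^{\rho+1}$.

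It then remains to see that, running this descent (iterating it on the $X_B$, which have strictly smaller dimension), the cosets $x_i+B_i$ fall into boundedly many families indexed by abelian subvarieties $B$, the number of families being bounded in terms of $g$, $\deg_L X$, $\deg_L A$; Bogomolov's finiteness of $\Sigma(X;A)$ \cite{BogomolovUeno} (and of its analogues at each stage) is what feeds this, the iteration terminating because dimensions drop. Summing the per-$B$ estimates over these finitely many families and absorbing their number into the base of the exponential gives $n\le c_2(g,\deg_L X,\deg_L A)^{\rho+1}$, and combining with the bound on $\#S$ finishes the argument.

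The hard part is not the descent step itself --- that is clean --- but the surrounding bookkeeping: first, justifying the normalisation of the decomposition so that its cosets really are maximal cosets of $X$ carrying Zariski-dense intersections with $\Gamma$ while $S$ stays equal to $X^{\circ}(F)\cap\Gamma$ (this is exactly where an earlier version of this argument had a gap, repaired with the help of G.~Dill); and second, keeping track, through the iterated descent, of the finitely many abelian subvarieties $B$ that can occur together with the degrees $\deg_{L_B}X_B$, $\deg_{L_B}A_B$, so that every quantity entering the final count depends only on $g$, $\deg_L X$ and $\deg_L A$. Once both are under control, the dictionary ``maximal coset in $X\ \leftrightarrow\ $ isolated point of $X_B^{\circ}$'' transports the bound of Conjecture~\ref{ConjHighDimML} to the count of cosets with no loss.
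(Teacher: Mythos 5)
Your strategy is the same as the paper's: bound $\#S$ by applying Conjecture~\ref{ConjHighDimML} to $X$ itself, and bound $n$ by attaching to each $B\in\Sigma(X;A)$ an auxiliary variety $X_B$ parametrising the translates of $B$ contained in $X$, observing that the cosets $x_i+B_i$ with $B_i=B$ inject into the points of $X_B$ off its Ueno locus that meet the group, and applying the conjecture to $X_B$; the finiteness of $\Sigma(X;A)$ with a degree bound (Bogomolov \cite{BogomolovUeno} plus \cite[Prop.4.1]{Remond:Decompte}) supplies the number of families. The one structural difference is where $X_B$ lives: you pass to the quotient $q\colon A\to A/B$, with $q(\Gamma)$ and a descended polarization, whereas the paper stays inside $A$ by intersecting $\bigcap_{b\in B(F)}(X-b)$ with a complement $B^{\perp}$ of controlled degree \cite{MW:Complement}, so that all degrees are controlled by B\'ezout in $A$ and the conjecture is invoked with the same $A$, $\Gamma$, $L$. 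Your maximality argument showing $q(x)\in X_B^{\circ}$ is correct and is a clean substitute for the paper's appeal to \cite[Lem.4.6]{Remond:Decompte}, but the quotient route leaves exactly the point you defer as ``bookkeeping'' unproved, and it is not entirely routine: one must actually exhibit an ample $L_B$ on $A/B$ with $\deg_{L_B}(A/B)$ and $\deg_{L_B}X_B$ bounded in terms of $g$, $\deg_LX$, $\deg_LA$, and in practice this is done through $B^{\perp}$ anyway --- which is why the paper never leaves $A$. Three smaller repairs: (i) $X_B$ need not be irreducible while Conjecture~\ref{ConjHighDimML} is stated for irreducible $X$, so you must bound the number of components of $X_B$ (by its degree) and apply the conjecture to each component, as the paper does; (ii) no iteration on dimension is needed --- since the conjecture is assumed for all $X$, $A$, $\Gamma$, $L$, a single application to (the components of) each $X_B$ suffices; (iii) your normalisation should \emph{enlarge} each $x_i+B_i$ to a maximal coset of $X$ (keeping the base point $x_i\in\Gamma$), not merely discard cosets contained in other cosets of the decomposition, since a coset of the decomposition may sit inside a larger coset of $X$ that $\Gamma$ does not detect.
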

\begin{proof}
For arbitrary $X$. 
By Bogomolov \cite[Thm.1]{BogomolovUeno}, each $B \in \Sigma(X;A)$ satisfies $\deg_L B \le c_3$ for some constant $c_3 = c_3(g,\deg_L X) > 0$. Thus $\# \Sigma(X;A) \le c_4 = c_4(g,\deg_L X, \deg_L A)$ by \cite[Prop.4.1]{Remond:Decompte}.

The Ueno locus of $X$ defined above is a finite union $\bigcup_{B \in \Sigma(X;A)}(X_B+B)$, with $X_B$ constructed as follows. 
Let $B^\perp$ be a complement of $B$, \textit{i.e.} $B \cap B^\perp$ is finite and $B+B^\perp = A$. It is possible to choose such a $B^\perp$ with $\deg_L B^\perp \le c_5'(g, \deg_L A, \deg_L B)$; see \cite{MW:Complement}. Set $X_B:=\bigcap_{b \in B(F)}(X-b) \bigcap B^\perp$. This intersection must be a finite intersection (of at most $\dim X \le g$ members) by dimension reasons. Recall that $\deg_L B \le c_3(g,\deg_L X)$. So $\deg_L X_B \le c_5(g,\deg_LA, \deg_L X)$ by B\'{e}zout's Theorem. In particular $X_B$ has $\le c_5$ irreducible components $X_{B,1},\ldots,X_{B,m_B}$.


As the $B_i$'s in \eqref{EqHighDimML} satisfies $x_i+B_i \subseteq X$ and $\dim B_i > 0$, we may and do assume $B_i \in \Sigma(X;A)$ by definition of the Ueno locus. It is not hard to check that  the finite set $S$ from \eqref{EqHighDimML} is $X^{\circ}(F) \cap \Gamma$. So \eqref{EqHighDimML} becomes
\begin{equation}\label{EqConjHighDimMLStrOrg}
X(F) \cap \Gamma = \bigcup_{B\in \Sigma(X;A)} \bigcup_{j=1}^{n_B} (x_{B,j}+B)(F) \cap \Gamma  \coprod S.
\end{equation}
Moreover, each $x_{B,j}$ can be chosen to be in $X_B^{\circ}(F) \cap \Gamma$, where $X_B^{\circ} = \bigcup_{k=1}^{m_B}X_{B,k}^{\circ}$. See \cite[Lem.4.6]{Remond:Decompte}; notice that $p|_{X_B}$ is finite for the quotient $p \colon A \rightarrow A/B$. In particular, $n_B \le \#X_B^{\circ}(F) \cap \Gamma$.

We need to take a closer look at the union in \eqref{EqConjHighDimMLStrOrg}. First, we have seen $\# \Sigma(X;A) \le c_4(g,\deg_L X, \deg_L A)$ above.

Next we bound $n_B$ for each $B \in \Sigma(X;A)$. Let $B \in \Sigma(X;A)$. 
Conjecture~\ref{ConjHighDimML} applied to each irreducible component $X_{B,k}$ of $X_B$ says that $\#X_{B,k}^{\circ}(F) \cap \Gamma \le c^{\mathrm{rk}\Gamma+1}$ for some $c = c(g,\deg_L X_{B,k}, \deg_L A) > 0$. But we have seen that $X_B$ has $\le c_5(g,\deg_L A, \deg_L X)$ components and that $\deg_L X_{B,k} \le \deg_L X_B \le c_5(g,\deg_L X)$ before. So $\#X_B^{\circ}(F) \cap \Gamma \le c_6(g,\deg_L X, \deg_L A)^{\mathrm{rk}\Gamma+1}$. In particular, $n_B \le c_6(g,\deg_L X, \deg_L A)^{\mathrm{rk}\Gamma+1}$ for each $B \in \Sigma(X;A)$.




By \eqref{EqConjHighDimMLStrOrg}, Conjecture~\ref{ConjHighDimMLStr} is equivalent to
\begin{equation}\label{EqConjHighDimMLStr}
\sum_{B \in \Sigma(X;A)} n_B  + \#S \le c(g,\deg_L X, \deg_L A)^{\mathrm{rk}\Gamma+1}.
\end{equation}
We have bounded $\#\Sigma(X;A)$ and $n_B$ in terms of $g,\deg_L X, \deg_L A$ and $\mathrm{rk}\Gamma$ as desired.  It remains to bound $\#S$. But this is exactly what Conjecture~\ref{ConjHighDimML} claims. Hence we are done.
\end{proof}

\small
A natural question is whether the left hand side of Conjecture~\ref{ConjHighDimMLStr} can be replaced by $\deg_L (X(F)\cap\Gamma)^{\mathrm{Zar}}$, which is $\sum_{i=1}^n \deg_L B_i + \#S$ in view of \eqref{EqHighDimML} for some well-chosen $B_i$. Unfortunately this is not possible in general, because in the proof of Lemma~\ref{LemmaConjHighDimMLStrWeakEqui} $(B(F)\cap\Gamma)^{\mathrm{Zar}}$ could be any abelian subvariety of $B$ and hence we cannot expect a bound for its degree. For example, let $X = A = E^2$ be the square of an elliptic curve defined over $\IQbar$. The graph $E_N \subseteq E^2$ of $[N]\colon E \rightarrow E$ then has degree $N^2$. Take a subgroup $\Gamma$ of $E_N(\IQbar)$ of rank $1$, then $\deg (X(\IQbar) \cap \Gamma)^{\mathrm{Zar}} = \deg E_N = N^2$. This provides a counterexample. 

However, the proof of Lemma~\ref{LemmaConjHighDimMLStrWeakEqui} suggests that this is the only obstacle. In fact, as $\deg_L B \le c_3(g,\deg_L X)$ for each $B \in \Sigma(X;A)$, in the proof \eqref{EqConjHighDimMLStr} can be improved to $\sum_{B \in \Sigma(X;A)} n_B \deg_L B  + \#S \le c(g,\deg_L X, \deg_L A)^{\mathrm{rk}\Gamma+1}$. Thus if Conjecture~\ref{ConjHighDimML} holds true for all $X$ and $\Gamma$ (in addition to $A$ and $L$), then the following conjecture holds true.\footnote{Conjecture~\ref{ConjHighDimMLStrStr} is suggested to me by Dan Abramovich.}
\begin{conjbisbis}{ConjHighDimML}\label{ConjHighDimMLStrStr}
If $\Gamma$ is saturate for each $B \in \Sigma(X;A)$, \textit{i.e.} $(B(F)\cap\Gamma)^{\mathrm{Zar}} = B$ for each $B \in \Sigma(X;A)$, then $\deg_L (X(F) \cap \Gamma)^{\mathrm{Zar}}  \le c(g,\deg_L X, \deg_L A)^{\mathrm{rk}\Gamma+1}$.
\end{conjbisbis}

On the other hand, Conjecture~\ref{ConjHighDimMLStrStr} implies both Conjecture~\ref{ConjHighDimML} and Conjecture~\ref{ConjHighDimMLStr}. Indeed by dimension reasons and the assumption $F=\bar{F}$, for any finite rank subgroup $\Gamma$ of $A(F)$ and any abelian subvariety $B$ of $A$, there exists a subgroup $\Gamma_B \supseteq \Gamma$ of rank $\le \mathrm{rk}\Gamma + \dim B \le \mathrm{rk}\Gamma + g$ such that $\Gamma_B$ is saturate for $B$. Applying this successively to each $B \in \Sigma(X;A)$, we get a subgroup $\Gamma_X \supseteq \Gamma$ of rank $\le \mathrm{rk}\Gamma + g \#\Sigma(X;A) \le \mathrm{rk}\Gamma + g \cdot c_4(g,\deg_LX,\deg_LA)$ which is saturate for all $B\in \Sigma(X;A)$. \textit{Assume Conjecture~\ref{ConjHighDimMLStrStr}}. Then $\sum_{B \in \Sigma(X;A)} n_B \deg_L B + \#S \le c(g,\deg_L X, \deg_L A)^{\mathrm{rk}\Gamma_X+1} \le c^{\mathrm{rk}\Gamma+gc_4+1} \le (c^{gc_4+1})^{\mathrm{rk}\Gamma+1}$. Thus $n+\#S = \sum_{B \in \Sigma(X;A)} n_B + \#S  \le (c^{gc_4+1})^{\mathrm{rk}\Gamma+1}$. Hence Conjecture~\ref{ConjHighDimMLStr} and Conjecture~\ref{ConjHighDimML} both hold true with $c$ replaced by $c^{gc_4+1}$.

\normalsize

\medskip

As in the case of curves, to prove Conjecture~\ref{ConjHighDimML} it suffices to work with $F = \IQbar$ by a standard specialization argument using Masser's result \cite{masser1989specializations}. So from now on we assume $F = \IQbar$. We also assume that $L$ is symmetric; this can be achieved by replacing $L$ by $L \otimes [-1]^*L$ (and $\deg_{L \otimes [-1]^*L}(X) = 2^{\dim X} \deg_L(X)$).

R\'{e}mond proved the generalized Vojta's Inequality \cite[Thm.1.2]{Remond:Vojtasup} for points in  $X^{\circ}(\IQbar)$ and the generalized Mumford's Inequality \cite[Thm.3.2]{Remond:Decompte} for points in $X^{\circ}(\IQbar) \cap \Gamma$. As in the case for curves, these two generalized inequalities also yield the desired bound (the one in Conjecture~\ref{ConjHighDimML}) for the number of \textit{large points} in $X^{\circ}(\IQbar) \cap \Gamma$. A modified version of these results then reduces Conjecture~\ref{ConjHighDimML} to studying the \textit{small points}, \textit{i.e.} to prove a bound in the form of 
\begin{equation}\label{EqSmallPointsBoundHighDim1}
\left\{P \in X^{\circ}(\IQbar) \cap \Gamma : \hat{h}_L(P) \le c \max\{1,h_{\mathrm{Fal}}(A)\} \right\} \le c^{\mathrm{rk}\Gamma+1}
\end{equation}
for some $c = c(g,\deg_L X, \deg_L A) > 0$. We refer to \cite[Thm.6.8]{DaPh:07} for this reduction.\footnote{The constants $c_{\mathrm{NT}}$ and $h_1$ in \cite{DaPh:07} are bounded by $ \max\{1,h_{\mathrm{Fal}}(A)\}$ by an argument similar to \cite[(8.4) and (8.7)]{DGHUnifML}.}

But one can and should do one more step. Let $A'$ be the abelian subvariety of $A$ generated by $X-X$. Then $X \subseteq A' + Q$ for some $Q \in A(\IQbar)$. The subgroup $\Gamma'$ of $A(\IQbar)$ generated by $\Gamma$ and $Q$ has rank $\le \mathrm{rk}\Gamma + 1$. We have $(X-Q)^{\circ} = X^{\circ} - Q$ by definition of the Ueno locus, $(X^{\circ}(\IQbar)-Q) \cap \Gamma  \subseteq  (X^{\circ}(\IQbar) - Q) \cap \Gamma' = X^{\circ}(\IQbar) \cap \Gamma'$ and $\deg_L(X-Q) = \deg_L X$. So we may replace $X$ by $X-Q$, $A$ by $A'$ and $\Gamma$ by $\Gamma' \cap A'(\IQbar)$ and the constant $c$ in the conclusion by $c^2$. Thus Conjecture~\ref{ConjHighDimML} is reduced to the following bound: \textit{Assume $X$ generates $A$}, then
\begin{equation}\label{EqSmallPointsBoundHighDim}
\left\{P \in X^{\circ}(\IQbar) \cap \Gamma : \hat{h}_L(P) \le c \max\{1,h_{\mathrm{Fal}}(A)\} \right\} \le c^{\mathrm{rk}\Gamma+1}
\end{equation}
for some $c = c(g,\deg_L X, \deg_L A) > 0$. 

\medskip
The following conjecture is a natural generalization of the New Gap Principle to high dimensional cases. Recall $X^{\circ}$ defined above Lemma~\ref{LemmaConjHighDimMLStrWeakEqui}.

\begin{conj}\label{ConjNGPHighDim}
Assume that $X$ generates $A$. 
There exist constants $c_1 = c_1(g,\deg_L X , \deg_L A) > 0$ and $c_2 = c_2(g,\deg_L X , \deg_L A) > 0$ satisfying the following property. For each $P_0 \in X^{\circ}(\IQbar)$, the set
\begin{equation}\label{EqNGPHighDim}
\left\{P \in X^{\circ}(\IQbar): \hat{h}_L(P-P_0) \le c_1 \max\{1,h_{\mathrm{Fal}}(A)\} \right\} 
\end{equation}
is contained in a proper Zariski closed subset $X' \subsetneq X$ with $\deg_L X'< c_2$.
\end{conj}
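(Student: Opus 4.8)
The plan is to transpose the proof of the New Gap Principle (Theorem~\ref{ThmNGP}) to the higher-dimensional setting. One first reduces to $F=\IQbar$ by the usual specialization argument of Masser, exactly as in $\mathsection$\ref{SubsectionProofUML}. Next one builds the parametrizing space: fix the polarization type $D$ and the two degrees $\deg_L A$, $\deg_L X$, and let $\cH$ parametrize those subvarieties $X$ in the relative Hilbert scheme $\operatorname{Hilb}(\mathfrak{A}_{g,D}/\mathbb{A}_{g,D})$ that are geometrically irreducible, generate their ambient abelian variety and have finite stabilizer (a constructible condition, so one may stratify and work on each stratum); over $\cH$ one has the universal abelian scheme $\cA_{\cH}:=\mathfrak{A}_{g,D}\times_{\mathbb{A}_{g,D}}\cH$ together with the universal subvariety $\cX_{\cH}\subseteq\cA_{\cH}$. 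One then parametrizes the base point: let $S$ be a suitable dense open of $\cX_{\cH}$ (so a point of $S$ is a triple $(A,X,P_0)$ with $P_0\in X^{\circ}$), pull $\cA_{\cH}$ back to an abelian scheme $\cA:=\cA_{\cH}\times_{\cH}S\to S$ with its tautological section $\sigma\colon S\to\cA$ given by $P_0$, and form the translated family $\mathfrak{X}:=(\cX_{\cH}\times_{\cH}S)-\sigma(S)\subseteq\cA$, whose fibre over $(A,X,P_0)$ is the subvariety $X-P_0$ of $A$ (note that $0\in X-P_0$). This is the exact analogue of the tautological family of Proposition~\ref{PropTautoAbelJacobi}, and the fibrewise statement \eqref{EqNGPHighDim} for all $(A,X,P_0)$ becomes a single statement about the fibres of $\mathfrak{X}\to S$.

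The heart of the argument would be a statement in the spirit of Proposition~\ref{PropNGPprep}: there exist constants $c_1,c_2$, depending only on $g,\deg_L X,\deg_L A$, so that for every $s=(A,X,P_0)\in S(\IQbar)$ the locus $\{x\in(X-P_0)(\IQbar):\hat{h}_{\cL}(x)\le c_1\max\{1,h_{\mathrm{Fal}}(A)\}\}$ lies in a proper Zariski closed subset of $X-P_0$ of degree $<c_2$. As in the curve case one would split this into a ``large $A$'' regime, to be handled by the height inequality Theorem~\ref{ThmHtIneq} applied to a non-degenerate fibred power $\mathfrak{X}^{[M]}\subseteq\cA^{[M]}$ with $M\geq\dim S$, and a ``small $A$'' regime, to be handled by K\"uhne's equidistribution as in Proposition~\ref{PropUnifBog}, or, more cheaply, by the Relative Bogomolov Conjecture~\ref{ConjRelBog} applied to $\mathfrak{X}^{[M]}$ once $M(g-\dim X)>\dim S$ (possible since $X\neq A$, hence $\dim X<g$), exactly as in Proposition~\ref{LemmaRelBogUnifBog}. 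In both regimes one transfers the resulting height lower bound on the relevant exceptional locus $Z\subseteq\mathfrak{X}^{[M]}$ to a single fibre by a higher-dimensional analogue of Lemma~\ref{LemmaNogaAlon}: if $Z_s\neq(X_s-P_0)^{M}$, then any set $\Sigma\subseteq X_s-P_0$ with $\Sigma^{M}\subseteq Z_s$ has Zariski closure contained in a proper closed subvariety of $X_s-P_0$ whose degree is bounded in terms of $M$, $\deg_{\cL}(X_s-P_0)$ and $\deg(Z_s)$, the two latter being bounded uniformly in $s$ by a Noetherianity argument over $S$ as in the footnotes to Propositions~\ref{PropNGPLargeCurve} and \ref{PropUnifBog}. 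Once Conjecture~\ref{ConjNGPHighDim} is in hand, the passage to Conjecture~\ref{ConjHighDimML} proceeds along the lines already indicated in $\mathsection$\ref{SubsectionHighDimUnifML}, combining it with R\'emond's generalized Vojta and Mumford inequalities for the large points of $X^{\circ}\cap\Gamma$ and with the ball-packing argument of $\mathsection$\ref{SubsectionProofUML}.

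The main obstacle — and the reason Conjecture~\ref{ConjNGPHighDim} is still open — is the non-degeneracy step in the ``large $A$'' regime. Unlike the curve case, where $\mathbb{M}_g$ maps quasi-finitely to $\mathbb{A}_g$ via Torelli, the Hilbert scheme $\cH\to\mathbb{A}_{g,D}$ has positive-dimensional fibres — a fixed abelian variety carries a positive-dimensional family of degree-$\deg_L X$ subvarieties — so the base $S$ is ``too large'' over $\mathbb{A}_{g,D}$. Consequently the modular map $\iota^{[M]}$ is not generically finite on $\mathfrak{X}^{[M]}$: the Betti map, being pulled back from $\mathfrak{A}_{g,D}$, is constant along the fibres of $S\to\mathbb{A}_{g,D}$, so $\mathrm{rank}_{\mathrm{Betti}}(\mathfrak{X}^{[M]},\cdot)<2\dim\mathfrak{X}^{[M]}$, i.e.\ $\mathfrak{X}^{[M]}$ is degenerate and Theorem~\ref{ThmHtIneq} gives no information for it (its $X^{*}$ is empty). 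Getting around this appears to require either a genuine extension of Gao's non-degeneracy criterion (Theorem~\ref{ThmNonDegXA}) adapted to the Hilbert-scheme directions — a ``non-degeneracy relative to $\cH$'' together with a correspondingly fibred-over-$\cH$ height inequality — or the full Relative Bogomolov Conjecture~\ref{ConjRelBog}, which controls small points in families with no non-degeneracy hypothesis and, as noted above, already supplies the ``small $A$'' input via fibred powers. Even granting Conjecture~\ref{ConjRelBog}, however, it only yields non-density of $\epsilon$-small points for an absolute $\epsilon>0$, whereas \eqref{EqNGPHighDim} demands the threshold $c_1\max\{1,h_{\mathrm{Fal}}(A)\}$ uniformly in $A$; upgrading to this $h_{\mathrm{Fal}}$-scaled threshold is precisely the point where a new height inequality, or a substantially new idea, still seems to be needed. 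A secondary technical point to settle along the way is the higher-dimensional version of Lemma~\ref{LemmaNogaAlon}, which one expects to follow from the proof given there by the same projection-and-induction-on-$M$ argument.
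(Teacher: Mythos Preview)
The statement you were asked to prove is a \emph{conjecture} in the paper; the paper does not prove it, nor does it sketch a proof strategy for it. It is presented explicitly as an open generalization of the New Gap Principle to higher dimension, and the surrounding discussion only (i) records the equivalence with Conjecture~\ref{ConjNGPHighDim2}, (ii) explains why the hypothesis ``$X$ generates $A$'' is necessary, and (iii) gives examples showing why one cannot replace ``contained in a proper closed subset of bounded degree'' by a cardinality bound. So there is no ``paper's own proof'' to compare against.

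That said, what you wrote is not a proof but an honest strategy sketch together with an identification of the obstruction, and as such it is sensible. Your proposed architecture---parametrize $(A,X,P_0)$ by a Hilbert-scheme base $S$, form the translated family $\mathfrak X\subseteq\cA$, take large fibred powers, and split into a height-inequality regime and a Bogomolov regime---is exactly the natural extrapolation of the curve argument in $\mathsection$\ref{SectionNGPProof}. Your diagnosis of the obstacle is also correct: the Hilbert-scheme base has positive-dimensional fibres over $\mathbb{A}_{g,D}$, so the modular map is not generically finite on $\mathfrak X^{[M]}$, and Theorem~\ref{ThmNonDegXA} does not apply to produce the non-degeneracy needed for Theorem~\ref{ThmHtIneq}. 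This is indeed the point at which the existing machinery stops, and the paper does not claim otherwise. Your remark that even the Relative Bogomolov Conjecture would only give an absolute $\epsilon$ rather than the $h_{\mathrm{Fal}}(A)$-scaled threshold is likewise well taken.

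One small caution: you should not present any of this as a proof. If the task was to supply a proof of Conjecture~\ref{ConjNGPHighDim}, the correct answer is that none is currently known; what you have written is a coherent research outline, not a proof, and you already say as much in your final paragraph.
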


This conjecture is equivalent to the following conjecture, because $(X-P_0)^{\circ} = X^{\circ} - P_0$ and $\deg_L(X-P_0) = \deg_L X$.
\begin{conjbis}{ConjNGPHighDim}\label{ConjNGPHighDim2}
Assume that $X$ generates $A$. 
There exist constants $c_1 = c_1(g,\deg_L X , \deg_L A) > 0$ and $c_2 = c_2(g,\deg_L X , \deg_L A) > 0$ satisfying the following property. The set
\begin{equation}\label{EqNGPHighDim2}
\left\{P \in X^{\circ}(\IQbar): \hat{h}_L(P) \le c_1 \max\{1,h_{\mathrm{Fal}}(A)\} \right\}
\end{equation}
is contained in a proper Zariski closed subset $X' \subsetneq X$ with $\deg_L X'< c_2$.
\end{conjbis}

If Conjecture~\ref{ConjNGPHighDim2} holds true for all $A$, $X$ and $L$, then one can also handle points on the Ueno locus by induction.

It is not hard to prove that Conjecture~\ref{ConjNGPHighDim} implies \eqref{EqSmallPointsBoundHighDim} by induction on $\dim X$ and the standard packing argument as presented in $\mathsection$\ref{SubsectionProofUML}. Thus we have
\begin{prop}
If Conjecture~\ref{ConjNGPHighDim} (or Conjecture~\ref{ConjNGPHighDim2}) holds true, then Conjecture~\ref{ConjHighDimML} holds true.
\end{prop}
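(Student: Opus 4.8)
The plan is to combine the reductions already carried out in this subsection with an induction on $\dim X$. By the discussion preceding the statement, Conjecture~\ref{ConjHighDimML} is reduced, via R\'emond's generalized Mumford and Vojta inequalities \cite[Thm.3.2]{Remond:Decompte} and \cite[Thm.1.2]{Remond:Vojtasup} together with \cite[Thm.6.8]{DaPh:07} and the normalization making $X$ generate $A$, to the small-points estimate \eqref{EqSmallPointsBoundHighDim}: for $X$ generating $A$ there is $c=c(g,\deg_L X,\deg_L A)$ with $\#\{P\in X^\circ(\IQbar)\cap\Gamma:\hat h_L(P)\le c\max\{1,h_{\mathrm{Fal}}(A)\}\}\le c^{\mathrm{rk}\Gamma+1}$ for every finite rank $\Gamma\subseteq A(\IQbar)$. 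So it suffices to prove \eqref{EqSmallPointsBoundHighDim}, and I would do this by induction on $\dim X$; the case $\dim X=0$ is trivial since then $X^\circ=X$ has at most $\deg_L X$ points.

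For the inductive step, fix $X$ generating $A$ and apply Conjecture~\ref{ConjNGPHighDim}, obtaining constants $c_1,c_2$ depending only on $g,\deg_L X,\deg_L A$. Writing $B:=c\max\{1,h_{\mathrm{Fal}}(A)\}$ for the constant $c$ still to be chosen and viewing $\Gamma\otimes\R$ as a Euclidean space with norm $\hat h_L^{1/2}$, the set in \eqref{EqSmallPointsBoundHighDim} lies in the closed ball of radius $B^{1/2}$ about $0$. By the elementary packing lemma \cite[Lem.6.1]{Remond:Decompte}, exactly as in $\mathsection$\ref{SubsectionProofUML}, it is covered by at most $(1+4\sqrt{c/c_1})^{\mathrm{rk}\Gamma}$ closed balls of radius $r:=\tfrac12(c_1\max\{1,h_{\mathrm{Fal}}(A)\})^{1/2}$ centered at points $P_0$ of the set itself. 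For each such center $P_0\in X^\circ(\IQbar)\cap\Gamma$ and each $P$ in its ball one has $\hat h_L(P-P_0)\le c_1\max\{1,h_{\mathrm{Fal}}(A)\}$, so Conjecture~\ref{ConjNGPHighDim} (with base point $P_0$) forces every such $P$ into a proper Zariski closed $X'=X'(P_0)\subsetneq X$ with $\deg_L X'<c_2$.

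It then remains, for a single center $P_0$, to bound $\#\bigl(X^\circ(\IQbar)\cap X'(\IQbar)\cap\Gamma\cap\{\hat h_L\le B\}\bigr)$. Write $X'=Y_1\cup\cdots\cup Y_m$ for the irreducible components; then $m\le\deg_L X'<c_2$, each $\dim Y_j<\dim X$ and $\deg_L Y_j<c_2$. Any positive-dimensional coset contained in $Y_j$ is contained in $X$, hence $X^\circ\cap Y_j\subseteq Y_j^\circ$, so each point to be counted lies in some $Y_j^\circ(\IQbar)\cap\Gamma$. Replacing $Y_j$ by $Y_j-Q_j$ inside the abelian subvariety $A_{Y_j}$ generated by $Y_j-Y_j$ (as in $\mathsection$\ref{SubsectionHighDimUnifML} and Lemma~\ref{LemmaConjHighDimMLStrWeakEqui}, with $\deg_{L}A_{Y_j}$, $\deg_{L}(Y_j-Q_j)$ controlled via \cite[Prop.4.1]{Remond:Decompte} and \cite{MW:Complement}, and $h_{\mathrm{Fal}}(A_{Y_j})$ controlled by $h_{\mathrm{Fal}}(A)$ up to $O_g(1)$), the induction hypothesis applied to $Y_j-Q_j\subseteq A_{Y_j}$ bounds $\#\bigl(Y_j^\circ\cap\Gamma\cap\{\hat h_L\le B\}\bigr)$ by $(c'')^{\mathrm{rk}\Gamma+1}$ with $c''=c''(g,\deg_L X,\deg_L A)$. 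Summing over the $\le c_2$ components, then over the $\le(1+4\sqrt{c/c_1})^{\mathrm{rk}\Gamma}$ balls, and absorbing the R\'emond large-point count $\le c_L^{\mathrm{rk}\Gamma}$ of the points with $\hat h_L(P)>c_L\max\{1,h_{\mathrm{Fal}}(A)\}$, yields a bound of the form $(\text{const})^{\mathrm{rk}\Gamma+1}$ with the constant depending only on $g,\deg_L X,\deg_L A$; taking $c$ equal to this constant closes the induction. Points on the Ueno locus of $X$ are treated by the same induction applied to the finitely many $X_B+B$ of strictly smaller dimension, so restricting to $X^\circ$ loses nothing.

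I expect the only real difficulty to be the uniform bookkeeping of constants: one must verify that every auxiliary quantity entering the argument — the NGP constants $c_1,c_2$, the number and degrees of the components $Y_j$, the degree of a complementary abelian subvariety, and above all the Faltings heights $h_{\mathrm{Fal}}(A_{Y_j})$ — is bounded purely in terms of $g$, $\deg_L X$, $\deg_L A$, and that the inductive constant at dimension $n-1$ can be taken to dominate the R\'emond large-point constant at dimension $n$; this last point is arranged by defining the constants recursively in the dimension, enlarging them at each step so that the exponent stays $\mathrm{rk}\Gamma+1$. Apart from this, the geometric structure of the proof is identical to the curve case in $\mathsection$\ref{SubsectionProofUML}.
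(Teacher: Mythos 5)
Your proposal is correct and follows essentially the same route the paper indicates: the reduction (via R\'emond's Mumford/Vojta inequalities and the translation making $X$ generate $A$) to the small-point estimate \eqref{EqSmallPointsBoundHighDim}, which is then proved by induction on $\dim X$ combined with the packing argument of $\mathsection$\ref{SubsectionProofUML}, applying Conjecture~\ref{ConjNGPHighDim} at each ball center and the induction hypothesis to the components of the resulting bounded-degree subvariety $X'$. The paper leaves this as a sketch, and the bookkeeping issues you flag (degrees of $A_{Y_j}$ and of the components, and the recursive choice of constants in the dimension) are exactly the standard points one must verify; your treatment of them is sound.
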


\medskip

Let us briefly explain why the assumption ``$X$ generates $A$'' is added in Conjecture~\ref{ConjNGPHighDim} and Conjecture~\ref{ConjNGPHighDim2}. Suppose $X$ is contained in a proper abelian subvariety $A'$ of $A$, and $A = A' \times A''$. Then $h_{\mathrm{Fal}}(A) = h_{\mathrm{Fal}}(A') + h_{\mathrm{Fal}}(A'')$. We are free to create examples with $h_{\mathrm{Fal}}(A'')$ arbitrarily large, and \eqref{EqNGPHighDim2} ultimately says that all points in $X^{\circ}(\IQbar)$ are actually contained in a proper Zariski closed subset of $X$. This is impossible.

Next let us briefly explain why we do not directly conjecture the set \eqref{EqNGPHighDim2} to have cardinality $< c_2$. Suppose $A = B \times J$ a product of two abelian varieties and $X = Y \times C$, with $Y \subseteq B$ and $C \subseteq J$ the Abel--Jacobi embedding of a curve of genus $\ge 2$ via some point; in particular $0_J \in C(\IQbar)$. It is possible to choose an appropriate ample line bundle $L := L_B\boxtimes L_J$ such that $\deg_{L_J}J = g!$ and $\deg_{L_J}C = g$. Then for each $y \in Y^{\circ}(\IQbar)$, we have $(y,0_J) \in X^{\circ}(\IQbar)$. It is possible to choose $C$ and $J$ with $h_{\mathrm{Fal}}(J)$ arbitrarily large. If the set \eqref{EqNGPHighDim2}  has cardinality $< c_2$, then this yields $\#Y^{\circ}(\IQbar) < \infty$, and this is not true in general. Notice that in this example, the statement of Conjecture~\ref{ConjNGPHighDim2} can be related to the New Gap Principle for curves embedded into Jacobians (Theorem~\ref{ThmNGP}).

Finally, we remark that the problems revealed by both examples above do not occur \textit{if} we only consider the setup for Uniform Bogomolov, \textit{i.e.} replace $c_1 \max\{1,h_{\mathrm{Fal}}(A)\}$ from \eqref{EqNGPHighDim2} by a constant $c_3$. Indeed, in both examples above, eventually what prevents us to get a more general statement for Conjecture~\ref{ConjNGPHighDim2} is the fact $h_{\mathrm{Fal}}(A)$ can be arbitrarily large.

\bibliographystyle{alpha}
\bibliography{bibliographie}


\end{document}